%% This is file `jcomp-template.tex',
%% 
%% Copyright 2017 Elsevier Ltd
%% 
%% This file is part of the 'Elsarticle Bundle'.
%% ---------------------------------------------
%% 
%% It may be distributed under the conditions of the LaTeX Project Public
%% License, either version 1.2 of this license or (at your option) any
%% later version.  The latest version of this license is in
%%    http://www.latex-project.org/lppl.txt
%% and version 1.2 or later is part of all distributions of LaTeX
%% version 1999/12/01 or later.
%% 
%% The list of all files belonging to the 'Elsarticle Bundle' is
%% given in the file `manifest.txt'.
%% 
%% Template article for Elsevier's document class `elsarticle'
%% with harvard style bibliographic references
%%
%% $Id: jcomp-template.tex 100 2017-07-14 13:15:12Z rishi $
%%
%% Use the option review to obtain double line spacing
%\documentclass[times,review,preprint,authoryear]{elsarticle}

%% Use the options `twocolumn,final' to obtain the final layout
%% Use longtitle option to break abstract to multiple pages if overfull.
%% For Review pdf (With double line spacing)
% \documentclass[times,review]{elsarticle}
%% For abstracts longer than one page.
% \documentclass[times,twocolumn,review,longtitle]{elsarticle}
%% For Review pdf without preprint line
%\documentclass[times,twocolumn,review,nopreprintline]{elsarticle}
%% Final pdf
\documentclass[times,final]{elsarticle}
%%
%\documentclass[times,twocolumn,final,longtitle]{elsarticle}
%%

%% Stylefile to load JCOMP template
% \usepackage{jcomp}
\usepackage{framed,multirow}

%% The amssymb package provides various useful mathematical symbols
\usepackage{amssymb}
\usepackage{latexsym}

% Following three lines are needed for this document.
% If you are not loading colors or url, then these are
% not required.
\usepackage{url}
\usepackage{xcolor}
\definecolor{newcolor}{rgb}{.8,.349,.1}

% the following are the additional packages and commands that are not given through the template
\usepackage{amsmath}
\usepackage{amsthm}

\usepackage{acro}
\usepackage{showlabels} % for showing labels - only use if draft version
\usepackage{subcaption}

\usepackage[ruled,vlined]{algorithm2e}
\usepackage{algorithmic}

\newtheorem{remark}{Remark}
\newtheorem{proposition}{Proposition}

% DRAFT VERSION ONLY
\usepackage{showlabels}
\usepackage[colorinlistoftodos,prependcaption,textsize=tiny]{todonotes}

% TODOs

% COLORS

\newcommand{\myred}[1]{{\color[rgb]{0.65,0.0,0.0} #1}}

\newcommand{\myorange}[1]{{\color[rgb]{.76,.39,.13} #1}}

% ALGORITHM
\SetKwComment{tcp}{// }{}
\SetKwInput{KwData}{Data} 
\SetKwInput{KwResult}{Result} 
\SetKwInput{KwIn}{Input} 
\SetKwInput{KwOut}{Output}

\newcommand{\vertiii}[1]{{\left\vert\kern-0.25ex\left\vert\kern-0.25ex\left\vert #1 
    \right\vert\kern-0.25ex\right\vert\kern-0.25ex\right\vert}}

\newcommand{\mycurlyO}[1]{\mathcal{O}( #1 )}

\newcommand{\myveceval}[2]{\left[ #1 \right]_{#2}}
\newcommand{\mytranspose}[1]{\left[ #1 \right]^T}
\newcommand{\mymatrixinv}[1]{\left[ #1 \right]^{-1}}

\newcommand{\mymatrixsum}[1]{\left[ #1\right]}

\newcommand{\myzeromatrix}[1][]{\mathbf{0}_{#1}}

\newcommand{\myspan}{\text{span}}

\newcommand{\myabs}[1]{\lvert #1 \rvert}
\newcommand{\diag}{\text{diag}}

\newcommand{\myCov}{\text{\textbf{cov}}}

\newcommand{\trace}{\text{trace}}

\DeclareMathSymbol{\shortminus}{\mathbin}{AMSa}{"39}

%% commands for the whole thesis

% spaces
\newcommand{\myX}{\mathcal{X}}
\newcommand{\myXdual}{\myX'}

\newcommand{\myfedim}{N}

\newcommand{\myW}{\mathcal{W}_{\mypara}}

\newcommand{\myUfebasis}[1][\myUfebasisIndex]{\myutestfct_{#1}}

% inner products
\newcommand{\mygeninner}[3]{\left< #1, #2 \right>_{#3}}
\newcommand{\myXinner}[2]{\mygeninner{#1}{#2}{\myX}}

% norms
\newcommand{\mygennorm}[2]{\left\| #1 \right\|_{#2}}

\newcommand{\myXnorm}[1]{\mygennorm{#1}{\myX}}

% state names
\newcommand{\myx}{x}
\newcommand{\myxtrue}{\myx_{\rm{true}}}
\newcommand{\mygenstate}{x}

\newcommand{\myutrue}{\myuvec_{\rm{true}}}
\newcommand{\myutestfct}{\varphi}
\newcommand{\myuvec}{\mathbf{u}}

\newcommand{\myek}[1][k]{\mathbf{e}_{#1}}

% parameterization
\newcommand{\myparadomain}{\mathcal{P}}
\newcommand{\myparadomainspace}{\mathbb{R}^{\myparadomainspacedim}}
\newcommand{\myparaspace}{\myparadomainspace}
\newcommand{\myparadomainspacedim}{p}

\newcommand{\myparadomaintrain}{\Xi_{\rm{train}}}

\newcommand{\mypararef}{\mypara_{\rm{ref}}}

% data

\newcommand{\mydatadim}{K}
\newcommand{\myd}{\mathbf{d}}
\newcommand{\mydata}{\myd}

%\newcommand{\mydataspace}[1][\myobsmap]{\mathcal{Z}_{#1}}
%\newcommand{\mydataspaceR}{\mathbb{R}_{\mySPL}}
%\newcommand{\mydatainner}[2]{\mygeninner{#1}{#2}{\myobsmap}}

% observation operator
\newcommand{\myobs}[1]{L #1}
\newcommand{\myobsmap}{L}
\newcommand{\myextendedobsmap}[1][\ell]{[\myobsmap, #1]}

\newcommand{\myobscont}[1][\myobsmap]{\gamma_{#1}}
\newcommand{\myobsinfsup}[1][\mypara]{\beta_{\myobsmap|\mathcal{W}}(#1)}

\newcommand{\myobsrbinfsup}[1][\mypara]{\tilde{\beta}_{\myobsmap | \mathcal{W}}(#1)}

% parameter-to-observable map
\newcommand{\myparatoobsmap}{G_{\myobsmap, \mypara}}
\newcommand{\myparatoobsmatrix}{\mathbf{G}_{\myobsmap, \mypara}}
\newcommand{\myparatoobs}[1]{G_{\myobsmap, \mypara}\left( #1 \right)}

\newcommand{\myparatoobsinfsup}[1][\mypara]{\beta_{G}(#1)}

\newcommand{\myparatoobsrbinfsup}[1][\mypara]{\tilde{\beta}_{G}(#1)}

% noise
\newcommand{\mynoise}[1][]{\varepsilon_{#1}}

\newcommand{\mynoiseinner}[2]{\mygeninner{#1}{#2}{\mynoiseCovInv}}
\newcommand{\mynoisenorm}[2][\mynoiseCovInv]{\mygennorm{#2}{#1}}
\newcommand{\mynoiseCovInv}{\mynoiseCov^{-1}}

% problem values
\newcommand{\myetainf}{\underline{\eta}(\mypara)}
\newcommand{\myetainfBND}{\underline{\eta}}
\newcommand{\myetasup}{\overline{\eta}(\mypara)}

% sensor library
\newcommand{\mylibrary}{\mathcal{L}}
\newcommand{\mylibrarydim}{K_{\mathcal{L}}}

\newcommand{\mydatadimmax}{K_{\max}}

% eigenvalues
\newcommand{\myeigval}{\lambda}
\newcommand{\myeigvalmin}{\myeigval^{\min}}

% sensor selection
\newcommand{\myiter}{K}

\newcommand{\myuiter}[1][\myiter]{\myu_{#1}}

\newcommand{\myxiter}[1][\myiter]{\myx_{#1}}

\newcommand{\mysensorset}{\mathcal{S}}

%% commands for Bayes chapter

% placeholders
\newcommand{\myplaceholderprior}{\rm{pr}}
\newcommand{\myplaceholderpost}{\rm{post}}

% general
\newcommand{\mymodel}[1][\mypara]{\mathcal{M}_{#1}}
\newcommand{\myRBmodel}{\tilde{\mathcal{M}}_{\mypara}}

% names for variables
\newcommand{\mypara}{\theta}

\newcommand{\mystate}[1][\myu]{\myx_{\mypara}(#1)}
\newcommand{\mystatepara}[2]{\myx_{#1}(#2)}
\newcommand{\myRBstate}[1][\myu]{\tilde{x}_{\mypara}(#1)}

\newcommand{\myu}{\mathbf{u}}
\newcommand{\myum}[1][m]{\psi_{#1}}
\newcommand{\myv}{\mathbf{v}}

% spaces
\newcommand{\myU}{\mathbb{R}^{\myfedimU}}
\newcommand{\myfedimU}{M}

% operators
\newcommand{\myobskmap}[1][k]{\ell_{#1}}
\newcommand{\myobsk}[2][k]{\myobskmap[#1](#2)}

% coefficients
\newcommand{\myreg}{\sigma^2}

% probability stuff
\newcommand{\mynoiseCov}[1][\myobsmap]{\Sigma_{#1}}
\newcommand{\mynoiseCovobs}[1][\myobsmap]{\mynoiseCov[#1]}

\newcommand{\mynoiseCovinv}[1][\myobsmap]{\Sigma_{#1}^{-1}}
\newcommand{\mynoiseCovinvobs}[1][\myobsmap]{\mynoiseCovinv[#1]}

\newcommand{\myprior}{\pi_{\myplaceholderprior}}
\newcommand{\mypriormeasure}{\mu_{\myplaceholderprior}}
\newcommand{\mypriormean}{\myu_{\myplaceholderprior}}
\newcommand{\mypriorCov}{\Sigma_{\myplaceholderprior}}
\newcommand{\mypriorCovInv}{\Sigma_{\myplaceholderprior}^{-1}}
\newcommand{\mypriorCovEig}[1][m]{\lambda_{\myplaceholderprior}^{#1}}
\newcommand{\mypriorinner}[2]{\mygeninner{#1}{#2}{\mypriorCovInv}}
\newcommand{\mypriornorm}[1]{\| #1 \|_{\mypriorCovInv}}
\newcommand{\mypriorpdf}{\myprior}

\newcommand{\myposterior}{\pi_{\myplaceholderpost}^{\myobsmap,\mypara}}
\newcommand{\myposteriormeasure}{\mu_{\myplaceholderpost}^{\myobsmap,\mypara}}
\newcommand{\myposteriormean}{\myu_{\myplaceholderpost}^{\myobsmap,\mypara}}
\newcommand{\mypostCov}[1][\myobsmap, \mypara]{\Sigma_{\myplaceholderpost}^{#1}}
\newcommand{\mypostCovInv}{\mymatrixinv{\mypostCov}}
\newcommand{\mypostCovEig}[1][m]{\lambda_{\myobsmap}^{\mypara, #1}}

\newcommand{\mylikelyhood}{\Phi_{\myobsmap}}

% implementation
\newcommand{\mySVDU}{\mathbf{U}}
\newcommand{\mySVDSigma}{\mathbf{D}_{\rm{pr}}}
\newcommand{\mySVDSigmaEntry}[1][i]{\myeigval^{#1}_{\rm{pr}}}

\newcommand{\mynoiseChol}[1][\myobsmap]{\mathbf{C}_{#1}}
\newcommand{\mynoiseCholinv}[1][\myobsmap]{\mathbf{C}_{#1}^{-1}}
\newcommand{\mynoiseCholadj}[1][\myobsmap]{\mathbf{C}_{#1}^T}
\newcommand{\mynoiseCholinvadj}[1][\myobsmap]{\mathbf{C}_{#1}^{-T}}
\newcommand{\myCholvector}[1][\ell]{\mathbf{v}_{\myobsmap, #1}}
\newcommand{\myCholvectorw}[1][\ell]{\mathbf{c}_{\myobsmap, #1}}
\newcommand{\myCholvectorx}[1][\ell]{\mathbf{r}_{\myobsmap, #1}}
\newcommand{\myCholvar}[1][\ell]{v_{#1,#1}}
\newcommand{\myCholbeta}{c_{\ell, \ell}}

\newcommand{\myhypereps}{\varepsilon_{\mypara}}

\newcommand{\myparaiter}[1][\myiter]{\mypara_{#1}}

% non-linear setting

% numerical results
\newcommand{\myOmega}{\Omega}

\newcommand{\myOmegaPoint}{\chi}

\newcommand{\myInflow}{\Gamma_{\rm{In}}}

\newcommand{\myXtestfct}{\phi}

\newcommand{\myerrorboundPrimal}{\Delta(\mypara)}

% ACRONYMS

\DeclareAcronym{3dvar}{
short = 3D-VAR,
long = 3D-VAR,
tag = method,
}

\DeclareAcronym{4dvar}{
short = 4D-VAR,
long = 4D-VAR,
tag = method
}

\DeclareAcronym{ae}{
short = a.e.,
long = \text{almost every},
tag = abbrev
}

\DeclareAcronym{aoed}{
short = A-OED,
long = A-optimal experimental design,
tag = abbrev
}

\DeclareAcronym{aposteriori}{
short = \textit{a posteriori},
long = \myred{\textit{a posteriori}},
tag = shortcut
}

\DeclareAcronym{apriori}{
short = \textit{a priori},
long = \myred{\textit{a priori}},
tag = shortcut
}

\DeclareAcronym{bk}{
short = bk,
long = best knowledge,
tag = abbrev
}

\DeclareAcronym{bnb}{
short = BNB,
long = Banach-Ne\v{c}as-Babu\v{s}ca,
tag = abbrev
}

\DeclareAcronym{cg}{
short = CG,
long = Continuous Galerkin,
tag = abbrev
}

\DeclareAcronym{CN}{
short = CN,
long = Crank-Nicolson,
tag = abbrev
}

\DeclareAcronym{comp}{
short = \texttt{cOMP},
long = collective orthogonal matching pursuit,
tag = abbrev
}

\DeclareAcronym{doed}{
short = D-OED,
long = D-optimal experimental design,
tag = abbrev
}

\DeclareAcronym{eim}{
short = EIM,
long = empirical interpolation method,
tag = abbrev
}

\DeclareAcronym{eoed}{
short = E-OED,
long = E-optimal experimental design,
tag = abbrev
}

\DeclareAcronym{fe}{
short = FE,
long = finite element,
tag = abbrev,
long-plural = s,
short-plural = s
}

\DeclareAcronym{fe4dvar}{
short = \myred{FE-4D-VAR},
long = \myred{FE-4D-VAR},
tag = method
}

\DeclareAcronym{geim}{
short = GEIM,
long = generalized empirical interpolation method,
tag = abbrev
}

\DeclareAcronym{greedyOMP}{
short = \myred{\texttt{greedyOMP}},
long = \myred{greedy orthogonal matching pursuit},
tag = abbrev
}

\DeclareAcronym{greedyPOD}{
short = \texttt{POD-greedy},
long = \myred{greedy-POD},
tag = method
}

\DeclareAcronym{iff}{
short = \textit{iff},
long = if and only if,
tag = abbrev
}

\DeclareAcronym{iid}{
short = i.i.d.,
long = independent identically distributed,
tag = abbrev
}

\DeclareAcronym{infsup}{
short = \textit{inf-sup},
long = \myred{\textit{inf-sup}},
tag = shortcut
}

\DeclareAcronym{kkt}{
short = KKT,
long = Karush-Kuhn-Tucker,
tag = abbrev
}

\DeclareAcronym{map}{
short = MAP,
long = maximum a posteriori probability,
tag = abbrev
}

\DeclareAcronym{mgs}{
short = MGS,
long = modified Gram Schmidt,
tag = abbrev
}

\DeclareAcronym{mor}{
short = MOR,
long = model order reduction,
tag = abbrev
}

\DeclareAcronym{moose}{
short = MOOSE,
long = \myred{MOOSE},
tag = method
}

\DeclareAcronym{oed}{
short = OED,
long = optimal experimental design,
tag = abbrev
}

\DeclareAcronym{omp}{
short = OMP,
long = orthogonal matching pursuit,
tag = abbrev
}

\DeclareAcronym{pbdw}{
short = PBDW,
long = parameterized-background data-weak,
tag = abbrev
}

\DeclareAcronym{pde}{
short = PDE,
long = partial differential equation,
tag = abbrev,
long-plural = s,
short-plural = s
}

\DeclareAcronym{pdf}{
short = pdf,
long = probability density function,
tag = abbrev
}

\DeclareAcronym{pg}{
short = PG,
long = Petrov Galerkin,
tag = abbrev
}

\DeclareAcronym{pod}{
short = POD,
long = proper orthogonal decomposition,
tag = abbrev
}

\DeclareAcronym{qoi}{
short = QoI,
long = quantity of interest,
tag = abbrev
}

\DeclareAcronym{rb}{
short = RB,
long = reduced basis,
tag = abbrev,
long-plural-form = reduced bases,
short-plural = ,
}

\DeclareAcronym{rb3dvar}{
short = RB-3D-VAR,
long = \myred{RB-3D-VAR},
tag = method
}

\DeclareAcronym{rb4dvar}{
short = RB-4D-VAR,
long = \myred{RB-4D-VAR},
tag = method
}

\DeclareAcronym{scm}{
short = SCM,
long = successive constraint method,
tag = abbrev
}

\DeclareAcronym{sGreedy}{
short = \texttt{sGreedy},
long = \myred{\texttt{sGreedy}},
tag = method
}

\DeclareAcronym{spd}{
short = s.p.d.,
long = symmetric positive definite,
tag = abbrev
}

\DeclareAcronym{supg}{
short = SUPG,
long = Streamline Upwind Petrov-Galerkin,
tag = abbrev
}

\DeclareAcronym{svd}{
short = SVD,
long = singular value decomposition,
tag = abbrev
}

\DeclareAcronym{tg}{
short = \rm{tg},
long = time-gradient,
tag = abbrev
}

\DeclareAcronym{uq}{
short = UQ,
long = uncertainty quantification,
tag = abbrev
}

\DeclareAcronym{wcomp}{
short = \texttt{\myorange{wOMP}},
long = worst-case orthogonal matching pursuit,
tag = abbrev
}

\DeclareAcronym{wlog}{
short = w.l.o.g.,
long = without loss of generality,
tag = abbrev
}

% continuing with the journal template
% \journal{Journal of Computational Physics}

\begin{document}

% \verso{Nicole Aretz \textit{etal}}

\begin{frontmatter}

\title{A Greedy Sensor Selection Algorithm for Hyperparameterized Linear Bayesian Inverse Problems}

% \author[1]{Nicole \snm{Aretz}\corref{cor1}}
% \ead{nicole.aretz@austin.utexas.edu}
% \cortext[cor1]{Corresponding author: 
%   Tel.: +1-512-471-3312;  
%   fax: +1-512-471-8694;}
% \author[2]{Peng \snm{Chen}}
% \author[3]{Denise D. \snm{Degen}}
% \author[4]{Karen \snm{Veroy}}

\author[1]{Nicole {Aretz}}
% \ead{nicole.aretz@austin.utexas.edu}
% \cortext[cor1]{Corresponding author: 
%   Tel.: +1-512-471-3312;  
%   fax: +1-512-471-8694;}
\author[2]{Peng {Chen}}
\author[3]{Denise D. {Degen}}
\author[4]{Karen {Veroy}}

\address[1]{Oden Institute for Computational Engineering and Sciences, University of Texas at Austin, 201 E 24th St, Austin, TX 78712, USA}
\address[2]{School of Computational Science and Engineering, Georgia Institute of Technology, 756 W Peachtree St NW, Atlanta, GA 30308, USA}
\address[3]{Computational Geoscience, Geothermics, and Reservoir Geophysics, RWTH Aachen University, Mathieustr. 30, 52074 Aachen, Germany}
\address[4]{Center for Analysis, Scientific Computing and Applications, Department of Mathematics and Computer Science, Eindhoven University of Technology, 5612 AZ Eindhoven, The Netherlands}

% \received{1 May 2013}
% \finalform{10 May 2013}
% \accepted{13 May 2013}
% \availableonline{15 May 2013}
% \communicated{S. Sarkar}

\begin{abstract}
%%%
We consider optimal sensor placement for a family of linear Bayesian inverse problems characterized by a deterministic hyper-parameter.
The hyper-parameter describes distinct configurations in which measurements can be taken of the observed physical system.
To optimally reduce the uncertainty in the system's model with a single set of sensors, the initial sensor placement needs to account for the non-linear state changes of all admissible configurations.
We address this requirement through an observability coefficient which links the posteriors' uncertainties directly to the choice of sensors.
We propose a greedy sensor selection algorithm to iteratively improve the observability coefficient for all configurations through orthogonal matching pursuit.
The algorithm allows explicitly correlated noise models even for large sets of candidate sensors, and remains computationally efficient for high-dimensional forward models through model order reduction.
We demonstrate our approach on a large-scale geophysical model of the Perth Basin, and provide numerical studies regarding optimality and scalability with regard to classic optimal experimental design utility functions.
%%%%
\end{abstract}

%\begin{keyword}
%% MSC codes here, in the form: \MSC code \sep code
%% or \MSC[2008] code \sep code (2000 is the default)
%\MSC 41A05\sep 41A10\sep 65D05\sep 65D17
%% Keywords
%\KWD Keyword1\sep Keyword2\sep Keyword3
%\end{keyword}

\end{frontmatter}

%\linenumbers

%% main text

\section{Introduction}
\label{sec:introduction}

In the Bayesian approach to inverse problems (c.f. \cite{Stuart2010}), the uncertainty in a parameter is described via a probability distribution.
With Bayes' Theorem, the prior belief in a parameter is updated when new information is revealed such that the posterior distribution describes the parameter with improved certainty.
Bayes' posterior is optimal in the sense that it is the unique minimizer of the sum of the relative entropy between the posterior and the prior, and the mean squared error between the model prediction and the experimental data.
The noise model drives, along with the measurements, how the posterior's uncertainty is reduced in comparison to the prior.
A critical aspect -- especially for expensive experimental data\footnote{For instance, for projects harvesting geothermal energy, the development costs (e.g., drilling, stimulation, and tests) take up $50-70\%$ of the total budget (\cite{Stober2012}).
As each borehole can cost several million dollars, it is essential to plan their location carefully.} -- is how to select the measurements to improve the posterior's credibility best.
The selection of adequate sensors meeting individual applications' needs is, therefore, a big goal of the \ac{oed} research field and its surrounding community.
We refer to the literature (e.g., \cite{Ucinski2004,Melas2006,Pronzato2008}) for introductions.

The analysis and algorithm presented in this work significantly extend our initial ideas presented in \cite{Aretz2020} in which we seek to generalize the \acs{3dvar} stability results from \cite{Aretz-Nellesen2019a} to the probabilistic Bayesian setting.
Our proposed algorithm is directly related to the \ac{omp} algorithm \cite{Binev2018b, Maday2015e} for the \ac{pbdw} method and the \ac{eim} (\cite{Barrault2004a, Maday2013a}).
Closely related \ac{oed} methods for linear Bayesian inverse problems over \acp{pde} include \cite{Alexanderian2016, Alexanderian2014a, Attia2018, Alexanderian2018, Alexanderian2021, WuChenGhattas21}, mostly for A- and D-\ac{oed} and uncorrelated noise.
In recent years, these methods have also been extended to non-linear Bayesian inverse problems, e.g., \cite{Alexanderian2021a, Alexanderian2016a,Huan2013,WuChenGhattas20,WuOLearyRoseberryChenEtAl22}, while an advance to correlated noise has been made in \cite{Attia2020}.
In particular, \cite{WuChenGhattas20,WuOLearyRoseberryChenEtAl22} use similar algorithmic approaches to this work by applying a greedy algorithm to maximize the expected information gain.
Common strategies for dealing with the high dimensions imposed by the \ac{pde} model use the framework in \cite{Bui-ThanhGhattasMartinEtAl13} for discretization, combined with parameter reduction methods (e.g., \cite{Cui2016,Parente2020a, Lieberman2010, Chen2019a, ChenWuChenEtAl19b, ChenGhattas20, zahm2022certified}) and \ac{mor} methods for \ac{uq} problems (e.g., \cite{Qian2017, Chen2014a, Chen2017, OLearyRoseberryVillaChenEtAl22, OLearyRoseberryChenVillaEtAl22}).

In this paper, we consider inverse problem settings, in which a deterministic hyper-parameter describes anticipated system configurations such as material properties or loading conditions.
Each configuration changes the model non-linearly, so we obtain a \textit{family} of possible posterior distributions for any measurement data.
Supposing data can only be obtained with a single set of sensors regardless of the system's configuration, the \ac{oed} task becomes to reduce the uncertainty in each posterior uniformly over all hyper-parameters.
This task is challenging for high-dimensional models since 1) each configuration requires its own computationally expensive model solve, and 2) for large sets of admissible measurements, the comparison between sensors requires the inversion of the associated, possibly dense noise covariance matrix.
By building upon \cite{Aretz2020}, this paper addresses both challenges and proposes in detail a sensor selection algorithm that remains efficient even for correlated noise models.

The main contributions are as follows:
First, we identify an observability coefficient as a link between the sensor choice and the maximum eigenvalue of each posterior distribution. We also provide an analysis of its sensitivity to model approximations.
Second, we decompose the noise covariance matrix for any observation operator to allow fast computation of the observability gain under expansion with additional sensors.
Third, we propose a sensor selection algorithm that iteratively constructs an observation operator from a large set of sensors to increase the observability coefficient over all hyper-parameters.
The algorithm is applicable to correlated noise models, and requires, through the efficient use of \ac{mor} techniques, only a \textit{single} full-order model evaluation per selected sensor.

While the main idea and derivation of the observability coefficient are similar to \cite{Aretz2020}, this work additionally features 1) an analysis of the observability coefficient regarding model approximations, 2) explicit computational details for treating correlated noise models, and 3) a comprehensive discussion of the individual steps in the sensor selection algorithm. 
Moreover, the proposed method is tested using a large-scale geophysical model of the Perth Basin.

This paper is structured as follows:
In Section \ref{sec:Bayes:problem} we introduce the hyper-parameterized inverse problem setting, including all assumptions for the prior distribution, the noise model, and the forward model.
In Section \ref{sec:Bayes:observability}, we then establish and analyze the connection between the observability coefficient and the posterior uncertainty.
We finally propose our sensor selection algorithm in Section \ref{sec:Bayes:sensors} which exploits the presented analysis to choose sensors that improve the observability coefficient even in a hyper-parameterized setting.
We demonstrate the applicability and scalability of our approach on a high-dimensional geophysical model in Section \ref{sec:Bayes:results} before concluding in Section \ref{sec:Bayes:conclusion}.

\section{Problem setting}
\label{sec:Bayes:problem}

Let $\myX$ be a Hilbert space with inner product $\myXinner{\cdot}{\cdot}$ and induced norm $\myXnorm{\myx}^2 := \myXinner{\myx}{\myx}$.
We consider the problem of identifying unknown states $\myxtrue(\mypara) \in \myX$ of a single physical system under changeable configurations $\mypara$ from 
noisy measurements 
\begin{align*}
\mydata (\mypara)
\approx \mytranspose{\myobsk[1]{\myxtrue(\mypara)}, \dots, \myobsk[\mydatadim]{\myxtrue(\mypara)}}
\in \mathbb{R}^{\mydatadim}.
\end{align*}
The measurements are obtained by a set of $\mydatadim$ unique \textit{sensors} (or \textit{experiments}) $\myobskmap[1], \dots, \myobskmap[\mydatadim]  \in \myXdual$.
Our goal is to choose these sensors from a large \textit{sensor library} $\mylibrary \subset \myXdual$ of options in a way that optimizes how much information is gained from their measurements for any configurations $\mypara$.

\subsubsection*{Hyper-parameterized forward model}

We consider the unknown state $\myxtrue$ to be uniquely characterized by two sources of information: 
\begin{itemize}
\item an unknown parameter $\myutrue \in \myU$ describing uncertainties in the governing physical laws, and
\item a hyper-parameter (or configuration\footnote{We call $\mypara$ interchangeably hyper-parameter or configuration to either stress its role in the mathematical model or physical interpretation.}) $\mypara \in \myparadomain \subset \myparaspace$ describing dependencies on controllable configurations under which the system may be observed (such as material properties or loading conditions) where $\myparadomain$ is a given compact set enclosing all possible configurations. 
\end{itemize}
For any given $\myu \in \myU$ and $\mypara \in \myparadomain$, we let $\mystate \in \myX$ be the solution of an abstract model equation $\mymodel(\mystate; \myu) = 0$ and assume that the map $\myu \rightarrow \mystate$ is well-defined, linear, and uniformly continuous in $\myu$, i.e.
\begin{align}\label{def:Bayes:myetasup}
&\exists ~ \bar{\eta} > 0: &\myetasup := \sup_{\myu \in \myU} \frac{\myXnorm{\mystate}}{\mypriornorm{\myu}} < \bar{\eta} && \forall ~\mypara \in \myparadomain.
\end{align}

\begin{remark}
Although we assumed that $\myutrue$ lies in the Euclidean space $\myU$, any other linear space can be considered via an affine transformation onto an appropriate basis (see \cite{Alexanderian2016, DaPrato2006}).
For infinite-dimensional spaces, we first discretize with appropriate treatment of the adjoint operator (c.f. \cite{Bui-ThanhGhattasMartinEtAl13}).
\end{remark}

\begin{remark}
By keeping the model equation general, we stress the applicability of our approach to a wide range of problems.
For instance, time-dependent states can be treated by choosing $\myX$ as a Bochner space or its discretization (c.f. \cite{Schwab2009a}).
We also do not formally restrict the dimension of $\myX$, though any implementation relies on the ability to compute $\mystate$ with sufficient accuracy.
To this end, we note that the analysis in Section \ref{sec:Bayes:parameterreduction} can be applied to determine how discretization errors affect the observability criterion in the sensor selection.
\end{remark}

Following a probabilistic approach to inverse problems, we express the initial uncertainty in $\myutrue = \myutrue(\mypara)$ of any $\myxtrue = \mystate[\myutrue]$ in configuration $\mypara$ through a random variable $\myu$ with Gaussian prior $\mypriormeasure= \mathcal{N}\left(\mypriormean, \mypriorCov\right)$, where $\mypriormean \in \myU$ is the prior mean and $\mypriorCov \in \mathbb{R}^{\myfedimU \times \myfedimU}$ is a \ac{spd} covariance matrix.
The latter defines the inner product $\mypriorinner{\cdot}{\cdot}$ and its induced norm $\mypriornorm{\cdot}$ through
\begin{align}
&\mypriorinner{\myu}{\myv} := \myu^T \mypriorCovInv \tilde{\myu},
&&\mypriornorm{\myu}^2 := \mypriorinner{\myu}{\myu},
&&\forall ~\myu, \myv \in \myU. \label{eq:Bayes:priornorm}
\end{align}
With these definitions, the \ac{pdf} for $\mypriormeasure$ is
\begin{align*}
\mypriorpdf (\myu) = \frac{1}{\sqrt{(2\pi)^{\myfedimU}\det \mypriorCov}} \exp\left(-\frac12 \mypriornorm{\myu - \mypriormean}^2 \right).
\end{align*}
For simplicity, we assume $\{\myutrue(\mypara)\}_{\mypara \in \myparadomain}$ to be independent realizations of $\myu$ such that we may consider the same prior for all $\mypara$ without accounting for a possible history of measurements at different configurations.

\subsubsection*{Sensor library and noise model}

For taking measurements of the unknown states $\{\myxtrue(\mypara)\}_{\mypara}$, we call any linear functional $\ell \in \myXdual$ a \textit{sensor}, and its application to a state $\myx \in \myX$ its \textit{measurement} $\ell(\myx) \in \mathbb{R}$.
We model experimental measurements $d_{\ell} \in \mathbb{R}$ of the actual physical state $\myxtrue$ as $d_{\ell} = \ell(\myxtrue) + \mynoise[\ell]$ where $\mynoise[\ell] \sim \mathcal{N}(0, \myCov(\mynoise[\ell], \mynoise[\ell]))$ is a Gaussian random variable.
We permit noise in different sensor measurements to be correlated with a known covariance function $\myCov$.
In a slight overload of notation, we write $\myCov : \mylibrary \times \mylibrary \rightarrow \mathbb{R}$, $\myCov(\ell_i, \ell_j) := \myCov(\mynoise[\ell_i], \mynoise[\ell_j])$ as a symmetric bilinear form over the sensor library.
Any ordered subset $\mysensorset = \{\myobskmap[1], \dots, \myobskmap[\mydatadim] \} \subset \mylibrary$ of sensors can then form a (linear and continuous) \textit{observation operator} through
\begin{align*}
\myobsmap &:= \mytranspose{\myobskmap[1], \dots, \myobskmap[\mydatadim]} : \myX \rightarrow \mathbb{R}^{\mydatadim},&
\myobs{\myx} := \mytranspose{\myobsk[1]{\myx}, \dots, \myobsk[\mydatadim]{\myx}}.
\end{align*}
The experimental measurements of $\myobsmap$ have the form
\begin{align}\label{eq:Bayes:noisemodel}
\mydata = \mytranspose{\myobsk[1]{\myxtrue} + \mynoise[{\myobskmap[1]}], \dots, \myobsk[\mydatadim]{\myxtrue} + \mynoise[{\myobskmap[{\mydatadim}]}]} = \myobs{\myxtrue} + \mynoise
\qquad \text{with} \qquad
\mynoise = \mytranspose{\mynoise[{\myobskmap[{1}]}], \dots, \mynoise[{\myobskmap[{\mydatadim}]}]} \sim \mathcal{N}(\myzeromatrix, \myreg \mynoiseCov),
\end{align}
where $\myreg \mynoiseCov$ is the noise covariance matrix defined through
\begin{align}\label{def:Bayes:mynoiseCov}
\mynoiseCov &\in \mathbb{R}^{\mydatadim \times \mydatadim}, 
\qquad \text{such that} \qquad
\myveceval{\myreg \mynoiseCov}{i,j} := \myCov(\myobskmap[j], \myobskmap[i]) = \myCov(\mynoise[{\myobskmap[j]}], \mynoise[{\myobskmap[i]}])
\end{align}
with an auxiliary scaling parameter\footnote{We introduce $\myreg$ here as an additional variable to ease the discussion of scaling in Section \ref{def:Bayes:paratoobsinfsup}. However, we can set $\myreg = 1$ \ac{wlog}.} $\myreg > 0$.
We assume that the library $\mylibrary$ and the noise covariance function $\myCov$ have been chosen such that $\mynoiseCov$ is \ac{spd} for any combination of sensors in $\mylibrary$.
This assumption gives rise to the $\myobsmap$-dependent inner product and its induced norm
\begin{align}
&\mynoiseinner{\myd}{\tilde{\myd}} := \myd^T \mynoiseCovInv \tilde{\myd},
&&\mynoisenorm{\myd}^2 := \mynoiseinner{\myd}{\myd},
&&\forall ~\myd, \tilde{\myd} \in \mathbb{R}^{\mydatadim}.\label{eq:Bayes:datanorm}
\end{align}
Measured with respect to this norm, the largest observation of any (normalized) state is thus
\begin{align}\label{def:Bayes:obscont}
\myobscont &:= \sup_{\myXnorm{\myx} = 1} \mynoisenorm{\myobs{\myx}} = \sup_{\myx \in \myX}  \frac{\mynoisenorm{\myobs{\myx}}}{\myXnorm{\myx}}.
\end{align}
We show in Section \ref{sec:Cholesky} that $\myobscont$ increases under expansion of $\myobsmap$ with additional sensors despite the change in norm, and is therefore bounded by $\myobscont \le \myobscont[\mylibrary]$.

We also define the \textit{parameter-to-observable map}
\begin{align}\label{def:Bayes:myparatoobsmap}
\myparatoobsmap : \myU \rightarrow \mathbb{R}^{\mydatadim},
\quad \text{such that} \quad
\myparatoobs{\myu} := \myobs{\mystate[\myu]}.
\end{align}
With the assumptions above -- in particular the linearity and uniform continuity \eqref{def:Bayes:myetasup} of $\myx$ in $\myu$ -- the map $\myparatoobsmap$ is linear and uniformly bounded in $\myu$.
We let $\myparatoobsmatrix \in \mathbb{R}^{\mydatadim \times \myfedimU}$ denote its matrix representation with respect to the unit basis $\{\myek[m]\}_{m=1}^{\myfedimU}$.
The likelihood of $\myd \in \mathbb{R}^{\mydatadim}$ obtained through the observation operator $\myobsmap$ for the parameter $\myu \in \myU$ and the system configuration $\mypara$ is then
\begin{align*}
\mylikelyhood\left(\mydata ~\big|~ \myu, \mypara\right) := \frac{1}{\sqrt{2^{\mydatadim}\det \mynoiseCov}} \exp\left(-\dfrac{1}{2\myreg} \mynoisenorm{\myd - \myparatoobs{\myu}}^2 \right).
\end{align*}
Note that $\myparatoobsmap$ and $\myparatoobsmatrix$ may depend non-linearly on $\mypara$.

\subsubsection*{Posterior distribution}

Once noisy measurement data $\mydata \approx \myobs{\myxtrue(\mypara)}$ is available, Bayes' theorem yields the posterior \ac{pdf} as
\begin{equation}\label{eq:posterior}
\myposterior (\myu ~|~ \mydata) = \frac{1}{Z(\mypara)} \exp \left( - \dfrac{1}{2\myreg} \mynoisenorm{\myparatoobs{\myu} - \mydata}^2 - \dfrac{1}{2} \mypriornorm{\myu - \mypriormean}^2 \right)
\propto \mypriorpdf(\myu) \cdot \mylikelyhood\left(\mydata ~\big|~ \myu, \mypara\right),
\end{equation}
with normalization constant
\begin{equation*}
Z(\mypara) := \int_{\myparaspace} \exp \left(-\frac{1}{2\myreg} \mynoisenorm{\myparatoobs{\myu} - \mydata}^2\right)  ~d\mypriormeasure.
\end{equation*}
Due to the linearity of the parameter-to-observable map, the posterior measure $\myposteriormeasure$ is a Gaussian 
\begin{align*}
\myposteriormeasure = \mathcal{N}(\myposteriormean(\mydata), \mypostCov)
\end{align*}
with known (c.f. \cite{Stuart2010}) mean and covariance matrix
\begin{align}
&\myposteriormean(\mydata) = \mypostCov \left(\textstyle \frac{1}{\myreg} \myparatoobsmatrix^T \mynoiseCovinvobs \mydata + \mypriorCovInv \mypriormean \right)
&&  \in \mathbb{R}^{\myfedimU}, \label{eq:posteriormean}\\
&\mypostCov = \left( \textstyle \frac{1}{\myreg}\myparatoobsmatrix^T \mynoiseCovinvobs \myparatoobsmatrix  + \mypriorCovInv \right)^{-1}
&&  \in \mathbb{R}^{\myfedimU \times \myfedimU}. \label{eq:posteriorCov}
\end{align}
The posterior $\myposteriormeasure$ thus depends not only on the choice of sensors, but also on the configuration $\mypara$ under which their measurements were obtained.
Therefore, to decrease the uncertainty in all possible posteriors with a single, $\mypara$-independent observation operator $\myobsmap$, the construction of $\myobsmap$ should account for all admissible configurations $\mypara \in \myparadomain$ under which $\myxtrue$ may be observed.

\begin{remark}
The linearity of $\mystate$ in $\myu$ is a strong assumption that dictates the Gaussian posterior.
However, in combination with the hyper-parameter $\mypara$, our setting here can be re-interpreted as the Laplace-approximation for a non-linear state map $\mypara \mapsto \myx(\mypara)$ (c.f. \cite{Long2013, WuChenGhattas20, Aretz2022}).
The sensor selection presented here is then an intermediary step for \ac{oed} over non-linear forward models.
\end{remark}

\section{The Observability Coefficient}
\label{sec:Bayes:observability}

In this section, we characterize how the choice of sensors in the observation operator $\myobsmap$ and its associated noise covariance matrix $\mynoiseCov$ influence the uncertainty in the posteriors $\myposteriormeasure$, $\mypara \in \myparadomain$.
We identify an observability coefficient that bounds the eigenvalues of the posterior covariance matrices $\mypostCov$, $\mypara \in \myparadomain$ with respect to $\myobsmap$, and facilitates the sensor selection algorithm presented in Section \ref{sec:Bayes:sensors}.

\subsection{Eigenvalues of the Posterior Covariance Matrix}
\label{sec:Bayes:eigenvalues}

The uncertainty in the posterior $\myposterior$ for any configuration $\mypara \in \myparadomain$ is uniquely characterized by the posterior covariance matrix $\mypostCov$, which is in turn connected to the observation operator $\myobsmap$ through the parameter-to-observable map $\myparatoobsmap$ and the noise covariance matrix $\mynoiseCov$.
To measure the uncertainty in $\mypostCov$, the \ac{oed} literature suggests a variety of different utility functions to be minimized over $\myobsmap$ in order to optimize the sensor choice.
Many of these utility functions can be expressed in terms of the eigenvalues $\mypostCovEig[1] \ge \dots \ge \mypostCovEig[\myfedimU] > 0$ of $\mypostCov$, e.g.,
\begin{align*}
&\text{A-OED:}&&
\trace(\mypostCov) = \sum_{m=1}^{\myfedimU} \mypostCovEig[m]
&&\text{(mean variance)} \\
&\text{D-OED:}&&
\det(\mypostCov) = \prod _{m=1}^{\myfedimU} \mypostCovEig[m]
&&\text{(volume)} \\
&\text{E-OED:}&&
\lambda_{\max}(\mypostCov) = \mypostCovEig[1]
&&\text{(spectral radius).}
\end{align*}
In practice, the choice of the utility function is dictated by the application.
In \ac{eoed}, for instance, posteriors whose uncertainty ellipsoids stretch out into any one direction are avoided, whereas \acs{doed} minimizes the overall volume of the uncertainty ellipsoid regardless of the uncertainty in any one parameter direction.
We refer to \cite{Ucinski2004} for a detailed introduction and other \ac{oed} criteria.

Considering the hyper-parameterized setting where each configuration $\mypara$ influences the posterior uncertainty, we seek to choose a single observation operator $\myobsmap$ such that the selected utility function remains small for \textit{all} configurations $\mypara \in \myparadomain$, e.g., for \acs{eoed}, minimizing
\begin{align*}
    \min_{\myobskmap[1], \dots, \myobskmap[\mydatadim] \in \mylibrary} ~ \max_{\mypara \in \myparadomain} ~ \lambda_{\max}(\mypostCov)
    \quad \text{such that} \quad \myobsmap = \mytranspose{\myobskmap[1], \dots, \myobskmap[\mydatadim]}
\end{align*}
guarantees that the longest axis of each posterior covariance matrix $\mypostCov$ for any $\mypara \in \myparadomain$ has the same guaranteed upper bound. 
The difficulty here is that the minimization over $\myparadomain$ necessitates repeated, cost-intensive model evaluations to compute the utility function for many different configurations $\mypara$.
In the following, we therefore introduce an upper bound to the posterior eigenvalues that can be optimized through an observability criterion with far fewer model solves.
The bound's optimization indirectly reduces the different utility functions through the posterior eigenvalues.

Recalling that $\mypostCov$ is \ac{spd}, let $\{\myum[m]\}_{m=1}^{\myfedimU}$ be an orthonormal eigenvector basis of $\mypostCov$, i.e. $\myum^T \myum[n] = \delta_{m,n}$ and
\begin{align}\label{eq:Bayes:eigenvalueproblem}
\mypostCov \myum &= \mypostCovEig \myum & m=1, \dots, \myfedimU.
\end{align}
Using the representation \eqref{eq:posteriorCov}, any eigenvalue $\mypostCovEig$ can be written in the form
\begin{equation}\label{eq:Bayes:eigenvalue:equality}
\begin{aligned}
\frac{1}{\mypostCovEig} 
= \myum^T \mypostCovInv \myum 
= \myum^T \mymatrixsum{\textstyle \frac{1}{\myreg}\myparatoobsmatrix^T \mynoiseCovinvobs \myparatoobsmatrix + \mypriorCovInv } \myum 
= \frac{1}{\myreg} \mynoisenorm{\myparatoobs{\myum}}^2 + \mypriornorm{\myum}^2.
\end{aligned}
\end{equation}
Since $\myum$ depends implicitly on $\myobsmap$ and $\mypara$ through $\eqref{eq:Bayes:eigenvalueproblem}$, we cannot use this representation directly to optimize over $\myobsmap$.
To take out the dependency on $\myum$, we bound $\mypriornorm{\myum}^2 \ge \frac{1}{\mypriorCovEig[\max]}$ in terms of the maximum eigenvalue of the prior covariance matrix $\mypriorCov$.
Likewise, we define
\begin{align}\label{def:Bayes:paratoobsinfsup}
\myparatoobsinfsup 
&:= \inf_{\myu \in \myU} \frac{\mynoisenorm{\myparatoobs{\myu}}}{\mypriornorm{\myu}}
=\inf_{\myu \in \myU} \frac{\mynoisenorm{\myobs{\mystate}}}{\mypriornorm{\myu}}, 
\end{align}
as the minimum ratio between an observation for a parameter $\myu$ relative to the prior's covariance norm.
From \eqref{eq:Bayes:eigenvalue:equality} and \eqref{def:Bayes:paratoobsinfsup} we obtain the upper bound
\begin{align*}
\mypostCovEig 
=\left(\frac{1}{\myreg} \frac{\mynoisenorm{\myparatoobs{\myum}}^2}{\mypriornorm{\myum}^2} + 1 \right)^{-1} \mypriornorm{\myum}^{-2}
\le \left(\frac{1}{\myreg} \myparatoobsinfsup^2 + 1\right)^{-1} \mypriorCovEig[\max].
\end{align*}
Geometrically, this bound means that the radius $\mypostCovEig[1]$ of the outer ball around the posterior uncertainty ellipsoid is smaller than that of the prior uncertainty ellipsoid by at least the factor $\left(\frac{1}{\myreg} \myparatoobsinfsup^2 + 1\right)^{-1}$.
By choosing $\myobsmap$ to maximize $\min_{\mypara} \myparatoobsinfsup$, we therefore minimize this outer ball containing all uncertainty ellipsoids (i.e., for any $\mypara \in \myparadomain$).
As expected, the influence of $\myobsmap$ is strongest when the measurement noise is small such that data can be trusted ($\myreg \ll 1$), and diminishes with increasing noise levels ($\myreg \gg 1$).

\subsection{Parameter Restriction}\label{sec:Bayes:parameterreduction}

An essential property of $\myparatoobsinfsup$ is that $\myparatoobsinfsup = 0$ if $\mydatadim < \myfedimU$, i.e., the number of sensors in $\myobsmap$ is smaller than the number of parameter dimensions.
In this case, $\myparatoobsinfsup$ cannot distinguish between sensors during the first $\myfedimU-1$ steps of an iterative algorithm, or in general when less than a total of $\myfedimU$ sensors are supposed to be chosen.
For medium-dimensional parameter spaces ($\myfedimU \in \mycurlyO{10}$), we mitigate this issue by restricting $\myu$ to the subspace $\myspan\{\myUfebasis[1], \dots, \myUfebasis[{\min\{\mydatadim, \myfedimU\}}]\} \subset \myU$ spanned by the first $\min\{\mydatadim, \myfedimU\}$ eigenvectors of $\mypriorCov$ corresponding to its largest eigenvalues, i.e., the subspace with the largest prior uncertainty.
For high-dimensional parameter spaces or when the model $\mymodel$ has a non-trivial null-space, we bound $\myparatoobsinfsup$ further
\begin{equation}\label{eq:Bayes:paratoobsinfsup:lower}
\begin{aligned}
\myparatoobsinfsup 
&= \inf_{\myu \in \myU} \frac{\mynoisenorm{\myobs{\mystate}}}{\myXnorm{\mystate}} \frac{\myXnorm{\mystate}}{\mypriornorm{\myu}} 
\ge \inf_{x \in \myW} \frac{\mynoisenorm{\myobs{x}}}{\myXnorm{x}} \inf_{\myu \in \myU} \frac{\myXnorm{\mystate}}{\mypriornorm{\myu}}
= \myobsinfsup ~ \myetainf
\end{aligned}
\end{equation}
where we define the linear space $\myW$ of all achievable states
\begin{align*}
\myW := \{ \mystate \in \myX: ~\myu \in \myU \}
\end{align*}
and the coefficients
\begin{align}\label{def:Bayes:obsinfsup}
\myobsinfsup &:= \inf_{\mygenstate \in \myW} \frac{\mynoisenorm{\myobs{\mygenstate}}}{\myXnorm{\mygenstate}},
&\myetainf &:= \inf_{\myu \in \myU} \frac{\myXnorm{\mystate}}{\mypriornorm{\myu}}.
\end{align}
The value of $\myetainf$ describes the minimal state change that a parameter $\myu$ can achieve relative to its prior-induced norm $\mypriornorm{\myu}$.
It can filter out parameter directions that have little influence on the states $\mystate$.
In contrast, the observability coefficient $\myobsinfsup$ depends on the prior only implicitly via $\myW$; it quantifies the minimum amount of information (measured with respect to the noise model) that can be obtained on any state in $\myW$ relative to its norm.
Future work will investigate how to optimally restrict the parameter space based on $\myetainf$ before choosing sensors that maximize $\myobsinfsup$.
Existing parameter reduction approaches in a similar context include \cite{Chen2019a,Cui2015,Bui-Thanh2008,Lieberman2010}.
In this work, however, we solely focus on the maximization of $\myparatoobsinfsup$ and, by extension, $\myobsinfsup$ and henceforth assume that $\myfedimU$ is sufficiently small and $\myetainfBND := \inf_{\mypara \in \myparadomain}\myetainf > 0$ is bounded away from zero.

\subsection{Observability under model approximations}
\label{sec:Bayes:surrogate}

To optimize the observability coefficient $\myparatoobsinfsup$ or $\myobsinfsup$, it must be computed for many different configurations $\mypara \in \myparadomain$.
The accumulating computational cost motivates the use of \textit{reduced-order} surrogate models, which typically yield considerable computational savings versus the original \textit{full-order} model.
However, this leads to errors in the state approximation.
In the following, we thus quantify the influence of state approximation error on the observability coefficients $\myparatoobsinfsup$ and $\myobsinfsup$.
An analysis of the change in posterior distributions when the entire model $\mymodel$ is substituted in the inverse problem can be found in \cite{Stuart2010}.

% \begin{remark}\label{rmk:discretizationaccuracy}
% \missing[inline]{Comment on how this analysis also applies to discretizations}
% \end{remark}

Suppose a reduced-order surrogate model $\myRBmodel(\myRBstate; \myu) = 0$ is available that yields for any configuration $\mypara \in \myparadomain$ and parameter $\myu \in \myU$ a unique solution $\myRBstate \in \myX$ such that 
\begin{equation}\label{eq:Bayes:RBapprox}
\myXnorm{\mystate-\myRBstate} \le \myhypereps \myXnorm{\mystate}
\quad \text{with accuracy} \quad
0 \le \myhypereps \le \varepsilon < 1.
\end{equation}
Analogously to \eqref{def:Bayes:paratoobsinfsup} and \eqref{def:Bayes:obsinfsup}, we define 
the reduced-order observability coefficients
\begin{align}\label{def:Bayes:myparatoobsrbinfsup}
\myparatoobsrbinfsup &:= \inf_{\myu \in \myU} \frac{\mynoisenorm{\myobs{\myRBstate}}}{\mypriornorm{\myu}}, &
\myobsrbinfsup &:= \inf_{\myu \in \myU} \frac{\mynoisenorm{\myobs{\myRBstate}}}{\myXnorm{\myRBstate}}
\end{align}
to quantify the smallest observations of the surrogate states.
For many applications, it is possible to choose a reduced-order model whose solution can be computed at a significantly reduced cost such that $\myparatoobsrbinfsup$ and $\myobsrbinfsup$ are much cheaper to compute than their full-order counterparts $\myparatoobsinfsup$ and $\myobsinfsup$.
Since the construction of such a surrogate model depends strongly on the application itself, we refer to the literature (e.g., \cite{Benner2015,Schilders2008,Hesthaven2016,Quarteroni2015, Haasdonk2017}) for tangible approaches.

Recalling the definition of $\myobscont$ in \eqref{def:Bayes:obscont}, we start by bounding how closely the surrogate observability coefficient $\myobsrbinfsup$ approximates the full-order $\myobsinfsup$.

\begin{proposition}\label{thm:Bayes:obsinfsup:approx}
Let $\myetainf > 0$ hold, and let $\myRBstate \in \myX$ be an approximation to $\mystate$ such that \eqref{eq:Bayes:RBapprox} holds for all $\mypara \in \myparadomain$, $\myu \in \myU$.
Then
\begin{align}\label{eq:Bayes:myobsinfsup:RB}
(1-\myhypereps)~ \myobsrbinfsup - \myobscont \myhypereps
~\le~ \myobsinfsup
~\le~ (1+\myhypereps)~ \myobsrbinfsup + \myobscont \myhypereps.
\end{align}
\end{proposition}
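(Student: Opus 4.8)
The plan is to prove both inequalities by a ratio-by-ratio perturbation argument and then pass to the infimum. First I would note that, since $\myetainf > 0$ forces the linear map $\myu \mapsto \mystate$ to be bounded below and hence injective, the achievable-state space $\myW$ is nontrivial and $\mystate = 0$ exactly when $\myu = 0$; combined with $\myhypereps < 1$ and \eqref{eq:Bayes:RBapprox}, this also yields $\myRBstate = 0$ iff $\mystate = 0$, so both reduced and full quotients are well defined on the same nonzero states. Thus I may rewrite $\myobsinfsup = \inf_{\myu \in \myU,\, \mystate \neq 0} \frac{\mynoisenorm{\myobs{\mystate}}}{\myXnorm{\mystate}}$, exhibiting $\myobsinfsup$ and $\myobsrbinfsup$ as infima over the \emph{same} index set, which lets me compare the two quotients for each fixed $\myu$ before taking the infimum.

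The two ingredients I would assemble, for each admissible $\myu$, are: (i) a numerator estimate from the triangle inequality in the noise norm together with the continuity bound $\mynoisenorm{\myobs{z}} \le \myobscont \myXnorm{z}$ of \eqref{def:Bayes:obscont}, giving $\bigl| \mynoisenorm{\myobs{\mystate}} - \mynoisenorm{\myobs{\myRBstate}} \bigr| \le \mynoisenorm{\myobs{\mystate - \myRBstate}} \le \myobscont \myhypereps \myXnorm{\mystate}$; and (ii) a denominator estimate from the reverse triangle inequality applied to \eqref{eq:Bayes:RBapprox}, namely $(1-\myhypereps)\myXnorm{\mystate} \le \myXnorm{\myRBstate} \le (1+\myhypereps)\myXnorm{\mystate}$, where $\myhypereps < 1$ keeps the lower factor positive so that the reduced quotient is controlled.

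For the upper bound I would chain these as $\frac{\mynoisenorm{\myobs{\mystate}}}{\myXnorm{\mystate}} \le \frac{\mynoisenorm{\myobs{\myRBstate}}}{\myXnorm{\mystate}} + \myobscont \myhypereps \le (1+\myhypereps)\frac{\mynoisenorm{\myobs{\myRBstate}}}{\myXnorm{\myRBstate}} + \myobscont \myhypereps$, using $\myXnorm{\mystate}^{-1} \le (1+\myhypereps)\myXnorm{\myRBstate}^{-1}$ from (ii); taking the infimum over $\myu$, which is legitimate because $\myobscont \myhypereps$ does not depend on $\myu$, yields $\myobsinfsup \le (1+\myhypereps)\myobsrbinfsup + \myobscont \myhypereps$. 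The lower bound is the mirror image: for each $\myu$, $\frac{\mynoisenorm{\myobs{\mystate}}}{\myXnorm{\mystate}} \ge \frac{\mynoisenorm{\myobs{\myRBstate}}}{\myXnorm{\mystate}} - \myobscont \myhypereps \ge (1-\myhypereps)\frac{\mynoisenorm{\myobs{\myRBstate}}}{\myXnorm{\myRBstate}} - \myobscont \myhypereps \ge (1-\myhypereps)\myobsrbinfsup - \myobscont \myhypereps$, where I used $\myXnorm{\mystate}^{-1} \ge (1-\myhypereps)\myXnorm{\myRBstate}^{-1}$ and $\frac{\mynoisenorm{\myobs{\myRBstate}}}{\myXnorm{\myRBstate}} \ge \myobsrbinfsup$; taking the infimum over $\myu$ closes the proof.

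The main obstacle I anticipate is bookkeeping rather than depth: I must ensure the two coefficients are compared over a common, nondegenerate index set (this is precisely where $\myetainf > 0$ and $\myhypereps < 1$ enter, guaranteeing the denominators $\myXnorm{\mystate}$ and $\myXnorm{\myRBstate}$ are nonzero), use the denominator factors with consistent signs ($1+\myhypereps$ for the upper estimate, $1-\myhypereps$ for the lower), and verify that the $\myu$-independent additive constant $\myobscont \myhypereps$ survives the passage to the infimum intact.
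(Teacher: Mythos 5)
Your proof is correct and follows essentially the same route as the paper's: a pointwise comparison of the quotients via the (reverse) triangle inequality in the noise norm, the continuity bound $\mynoisenorm{\myobs{z}} \le \myobscont \myXnorm{z}$, and the denominator estimate $(1-\myhypereps)\myXnorm{\mystate} \le \myXnorm{\myRBstate} \le (1+\myhypereps)\myXnorm{\mystate}$, followed by passing to the infimum. The only differences are cosmetic: you write out the upper bound explicitly where the paper invokes symmetry, and you make the well-definedness of the quotients (via $\myetainf > 0$ and $\myhypereps < 1$) slightly more explicit than the paper does.
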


\begin{proof}
Let $\myu \in \myU \setminus \{\mathbf{0}\}$ be arbitrary.
Using \eqref{eq:Bayes:RBapprox} and the (reversed) triangle inequality, we obtain the bound
\begin{align}\label{eq:proof:1}
\frac{\myXnorm{\myRBstate}}{\myXnorm{\mystate}} 
\ge \frac{\myXnorm{\mystate} - \myXnorm{\mystate - \myRBstate}}{\myXnorm{\mystate}} 
\ge 1 - \myhypereps.
\end{align}
Note here that $\myetainf > 0$ implies $\myXnorm{\mystate} > 0$ so the quotient is indeed well defined.
The ratio of observation to state can now be bounded from below by
\begin{align*}
\frac{\mynoisenorm{\myobs \mystate}}{\myXnorm{\mystate}}
&\ge \frac{\mynoisenorm{\myobs \myRBstate}}{\myXnorm{\mystate}} - \frac{\mynoisenorm{\myobs{(\mystate - \myRBstate)}}}{\myXnorm{\mystate}} \\
&\ge  \frac{\myXnorm{\myRBstate }}{\myXnorm{\mystate}} \frac{\mynoisenorm{\myobs \myRBstate}}{\myXnorm{\myRBstate}} - \myobscont \frac{\myXnorm{\mystate-\myRBstate}}{\myXnorm{\mystate}}  \\
&\ge  (1-\myhypereps) \frac{\mynoisenorm{\myobs \myRBstate}}{\myXnorm{\myRBstate}} - \myobscont \myhypereps \\
&\ge (1-\myhypereps) \myobsrbinfsup - \myobscont \myhypereps,
\end{align*}
where we have applied the reverse triangle inequality, definition \eqref{def:Bayes:obscont}, the bounds \eqref{eq:Bayes:RBapprox}, \eqref{eq:proof:1}, and definition \eqref{def:Bayes:myparatoobsrbinfsup} of $\myobsrbinfsup$.
Since $\myu$ is arbitrary, the lower bound in \eqref{eq:Bayes:myobsinfsup:RB} follows from definition \eqref{def:Bayes:paratoobsinfsup} of $\myobsinfsup$.
The upper bound in \eqref{eq:Bayes:myobsinfsup:RB} follows analogously.
\end{proof}

For the observability of the parameter-to-observable map $\myparatoobsmap$ and its approximation $\myu \mapsto \myobs{\myRBstate}$, we obtain a similar bound.
It uses the norm $\myetasup$ of $\myx_{\mypara} : \myu \mapsto \mystate$ as a map from the parameter to the state space, see \eqref{def:Bayes:myetasup}.

\begin{proposition}\label{thm:Bayes:paratoobsinfsup:approx}
Let $\myRBstate \in \myX$ be an approximation to $\mystate$ such that \eqref{eq:Bayes:RBapprox} holds for all $\mypara \in \myparadomain$, $\myu \in \myU$.
Then
\begin{align}\label{eq:Bayes:myparatoobsinfsup:RB}
\myparatoobsrbinfsup - \myobscont \myetasup \myhypereps
\le \myparatoobsinfsup 
\le \myparatoobsrbinfsup + \myobscont \myetasup \myhypereps.
\end{align}
\end{proposition}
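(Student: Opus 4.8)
The plan is to mirror the argument used for Proposition~\ref{thm:Bayes:obsinfsup:approx}, only now normalizing each observation by $\mypriornorm{\myu}$ rather than by the state norm, which lets me invoke the \textit{sup}-based constant $\myetasup$ and avoid any assumption on $\myetainfBND$. I would fix an arbitrary $\myu \in \myU \setminus \{\mathbf{0}\}$ (so that $\mypriornorm{\myu} > 0$ and all quotients below are well defined), estimate the single ratio $\mynoisenorm{\myobs{\mystate}}/\mypriornorm{\myu}$, and only take the infimum over $\myu$ at the very end to recover $\myparatoobsinfsup$.

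For the lower bound I would first split the full-order observation with the reverse triangle inequality, $\mynoisenorm{\myobs \mystate} \ge \mynoisenorm{\myobs \myRBstate} - \mynoisenorm{\myobs{(\mystate - \myRBstate)}}$, and then control the error term through the continuity constant: by definition~\eqref{def:Bayes:obscont} of $\myobscont$ together with the approximation bound~\eqref{eq:Bayes:RBapprox}, $\mynoisenorm{\myobs{(\mystate - \myRBstate)}} \le \myobscont \myXnorm{\mystate - \myRBstate} \le \myobscont \myhypereps \myXnorm{\mystate}$. Dividing by $\mypriornorm{\myu}$ yields
\[
\frac{\mynoisenorm{\myobs \mystate}}{\mypriornorm{\myu}} \ge \frac{\mynoisenorm{\myobs \myRBstate}}{\mypriornorm{\myu}} - \myobscont \myhypereps \frac{\myXnorm{\mystate}}{\mypriornorm{\myu}}.
\]
The two remaining ratios are then bounded separately: the first is at least $\myparatoobsrbinfsup$ by definition~\eqref{def:Bayes:myparatoobsrbinfsup}, while the second is at most $\myetasup$ by definition~\eqref{def:Bayes:myetasup}. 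Hence $\mynoisenorm{\myobs \mystate}/\mypriornorm{\myu} \ge \myparatoobsrbinfsup - \myobscont \myetasup \myhypereps$ for every $\myu$, and taking the infimum over $\myu$ gives the left inequality of~\eqref{eq:Bayes:myparatoobsinfsup:RB}.

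The upper bound follows symmetrically by applying the reverse triangle inequality in the other direction, $\mynoisenorm{\myobs \myRBstate} \ge \mynoisenorm{\myobs \mystate} - \mynoisenorm{\myobs{(\mystate - \myRBstate)}}$, with the identical use of $\myobscont$, \eqref{eq:Bayes:RBapprox}, and $\myetasup$; taking the infimum over $\myu$ produces $\myparatoobsrbinfsup \ge \myparatoobsinfsup - \myobscont \myetasup \myhypereps$, which rearranges to the right inequality. The step I expect to require the most care is the error bound on $\mynoisenorm{\myobs{(\mystate - \myRBstate)}}$: one must extract the factor $\myXnorm{\mystate}$ (rather than $\myXnorm{\myRBstate}$) so that $\myetasup$ applies cleanly to the ratio $\myXnorm{\mystate}/\mypriornorm{\myu}$. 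This is precisely the bookkeeping choice that replaces the $(1\pm\myhypereps)$ prefactors of Proposition~\ref{thm:Bayes:obsinfsup:approx} with the single product $\myobscont \myetasup \myhypereps$ and removes the need for the hypothesis $\myetainf > 0$.
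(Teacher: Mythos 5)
Your proof is correct and follows essentially the same route as the paper's: reverse triangle inequality, the continuity bound via $\myobscont$ combined with \eqref{eq:Bayes:RBapprox}, the bound $\myXnorm{\mystate} \le \myetasup \mypriornorm{\myu}$ from \eqref{def:Bayes:myetasup}, division by $\mypriornorm{\myu}$, and an infimum over $\myu$, with the upper bound obtained symmetrically. The only cosmetic difference is that you bound the reduced-order ratio by $\myparatoobsrbinfsup$ pointwise before taking the infimum, whereas the paper takes the infimum of the whole expression at the end; these are equivalent.
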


\begin{proof}
Let $\myu \in \myU \setminus \{\mathbf{0}\}$ be arbitrary.
Then
\begin{align*}
\mynoisenorm{\myobs{\mystate}}
&\ge \mynoisenorm{\myobs{\myRBstate}} - \mynoisenorm{\myobs{(\mystate-\myRBstate)}} \\
&\ge \mynoisenorm{\myobs{\myRBstate}}  - \myobscont \myXnorm{\mystate - \myRBstate} \\
&\ge \mynoisenorm{\myobs{\myRBstate}} - \myobscont \myhypereps \myXnorm{\mystate} \\
&\ge \mynoisenorm{\myobs{\myRBstate}} - \myobscont \myhypereps \myetasup \mypriornorm{\myu}, 
\end{align*}
where we have used the reverse triangle inequality, followed by \eqref{def:Bayes:obscont}, \eqref{eq:Bayes:RBapprox}, and \eqref{def:Bayes:myetasup}.
We divide by $\mypriornorm{\myu}$ and take the infimum over $\myu$ to obtain
\begin{align*}
\myparatoobsinfsup 
= \inf_{\myu \in \myU} \frac{\mynoisenorm{\myobs{\mystate}}}{\mypriornorm{\myu}} 
\ge \inf_{\myu \in \myU} \frac{\mynoisenorm{\myobs{\myRBstate}} }{\mypriornorm{\myu}} - \myobscont ~ \myetasup ~ \myhypereps 
= \myparatoobsrbinfsup - \myobscont ~ \myetasup ~ \myhypereps.
\end{align*}
The upper bound in \eqref{eq:Bayes:myparatoobsinfsup:RB} follows analogously.
\end{proof}

If $\myhypereps$ is sufficiently small, Propositions \ref{thm:Bayes:obsinfsup:approx} and \ref{thm:Bayes:paratoobsinfsup:approx} justify employing the surrogates $\myobsrbinfsup$ and $\myparatoobsrbinfsup$ instead of the original full-order observability coefficients $\myobsinfsup$ and $\myparatoobsinfsup$.
This substitution becomes especially necessary when the computation of $\mystate$ is too expensive to evaluate $\myobsinfsup$ or $\myparatoobsinfsup$ repeatedly for different configurations $\mypara$.

Another approximation step in our sensor selection algorithm relies on the identification of a parameter direction $\myv \in \myU$ with comparatively small observability, i.e.
\begin{align*}
    \frac{\mynoisenorm{\myobs{\mystate[\myv]}}}{\mypriornorm{\myv}} \approx \inf_{\myu \in \myU} \frac{\mynoisenorm{\myobs{\mystate}}}{\mypriornorm{\myu}} = \myparatoobsinfsup 
    \qquad \text{or} \qquad
    \frac{\mynoisenorm{\myobs{\mystate[\myv]}}}{\myXnorm{\mystate[\myv]}} \approx  \inf_{\mygenstate \in \myW} \frac{\mynoisenorm{\myobs{\mygenstate}}}{\myXnorm{\mygenstate}} = \myobsinfsup.
\end{align*}
The ideal choice would be the infimizer of respectively $\myparatoobsinfsup$ or $\myobsinfsup$, but its computation involves $\myfedimU$ full-order model evaluations (c.f. Section \ref{sec:obsComp}).
To avoid these costly computations, we instead choose $\myv$ as the infimizer of the respective reduced-order observability coefficient.
This choice is justified for small $\myhypereps < 1$ by the following proposition:

\begin{proposition}
Let $\myetainf > 0$ hold, and let $\myRBstate \in \myX$ be an approximation to $\mystate$ such that \eqref{eq:Bayes:RBapprox} holds for all $\mypara \in \myparadomain$, $\myu \in \myU$.
Suppose 
%$\myv \in \arg \inf_{\myu \in \myU} \frac{\mynoisenorm{\myobs{\myRBstate}}}{\mypriornorm{\myu}}$, 
%$\myv \in \arg \inf_{\myu \in \myU} \mynoisenorm{\myobs{\myRBstate}} / \mypriornorm{\myu}$, 
$\myv \in \arg \inf_{\myu \in \myU} \mypriornorm{\myu}^{-1} \mynoisenorm{\myobs{\myRBstate}}$, 
then
\begin{align}\label{eq:Bayes:argsubstituion:G}
    \myparatoobsinfsup 
    \le \frac{\mynoisenorm{\myobs{\mystate[\myv]}}}{\mypriornorm{\myv}}
    \le \myparatoobsinfsup + 2\myobscont \myetasup \myhypereps.
\end{align}
Likewise, if 
%$\myv \in \arg \inf_{\myu \in \myU} \frac{\mynoisenorm{\myobs{\myRBstate}}}{\myXnorm{\myRBstate}}$, 
$\myv \in \arg \inf_{\myu \in \myU} \myXnorm{\myRBstate}^{-1} \mynoisenorm{\myobs{\myRBstate}}$, 
then
\begin{align}\label{eq:Bayes:argsubstituion:W}
    \myobsinfsup
    \le \frac{\mynoisenorm{\myobs{\mystate[\myv]}}}{\myXnorm{\mystate[\myv]}}
    \le \frac{1+\myhypereps}{1-\myhypereps} ~\left(\myobsinfsup + \myobscont \myhypereps \right) + \myobscont \myhypereps.
\end{align}
\end{proposition}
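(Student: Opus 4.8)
The plan is to treat the two statements in parallel, since in each case the lower bound comes for free and only the upper bound requires work. For \eqref{eq:Bayes:argsubstituion:G} the lower bound $\myparatoobsinfsup \le \mynoisenorm{\myobs{\mystate[\myv]}}/\mypriornorm{\myv}$ is just the defining infimum \eqref{def:Bayes:paratoobsinfsup} evaluated at the particular $\myv$; for \eqref{eq:Bayes:argsubstituion:W} the lower bound $\myobsinfsup \le \mynoisenorm{\myobs{\mystate[\myv]}}/\myXnorm{\mystate[\myv]}$ follows the same way from \eqref{def:Bayes:obsinfsup} once we observe $\mystate[\myv] \in \myW$. I would also record at the outset that $\myetainf > 0$, together with \eqref{eq:proof:1}, forces $\myXnorm{\mystate[\myv]} > 0$ and $\myXnorm{\myRBstate[\myv]} > 0$, so that every quotient appearing below is well defined.

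For the upper bound in \eqref{eq:Bayes:argsubstituion:G} I would first insert the surrogate state via the triangle inequality, $\mynoisenorm{\myobs{\mystate[\myv]}} \le \mynoisenorm{\myobs{\myRBstate[\myv]}} + \mynoisenorm{\myobs{(\mystate[\myv] - \myRBstate[\myv])}}$, and control the error term exactly as in the proof of Proposition \ref{thm:Bayes:paratoobsinfsup:approx}: the continuity \eqref{def:Bayes:obscont} of $\myobsmap$, the accuracy \eqref{eq:Bayes:RBapprox}, and the bound \eqref{def:Bayes:myetasup} bound it by $\myobscont \myetasup \myhypereps \mypriornorm{\myv}$. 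Dividing by $\mypriornorm{\myv}$ and using that $\myv$ minimizes the reduced-order quotient, so that $\mynoisenorm{\myobs{\myRBstate[\myv]}}/\mypriornorm{\myv} = \myparatoobsrbinfsup$, gives $\mynoisenorm{\myobs{\mystate[\myv]}}/\mypriornorm{\myv} \le \myparatoobsrbinfsup + \myobscont \myetasup \myhypereps$. Finally I would invoke the left-hand inequality of Proposition \ref{thm:Bayes:paratoobsinfsup:approx}, $\myparatoobsrbinfsup \le \myparatoobsinfsup + \myobscont \myetasup \myhypereps$, to eliminate $\myparatoobsrbinfsup$ and collect the two identical error contributions into the factor $2$.

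The upper bound in \eqref{eq:Bayes:argsubstituion:W} is the delicate one, because now the denominator $\myXnorm{\mystate[\myv]}$ differs from the surrogate denominator $\myXnorm{\myRBstate[\myv]}$ against which $\myv$ was optimized. After the same splitting of the numerator, whose error term is this time bounded by $\myobscont \myhypereps \myXnorm{\mystate[\myv]}$, I would factor
\begin{align*}
\frac{\mynoisenorm{\myobs{\myRBstate[\myv]}}}{\myXnorm{\mystate[\myv]}}
= \frac{\mynoisenorm{\myobs{\myRBstate[\myv]}}}{\myXnorm{\myRBstate[\myv]}} \cdot \frac{\myXnorm{\myRBstate[\myv]}}{\myXnorm{\mystate[\myv]}}
\le \myobsrbinfsup \, (1+\myhypereps),
\end{align*}
where the first factor equals $\myobsrbinfsup$ by the optimality of $\myv$ and the second is controlled by the companion of \eqref{eq:proof:1}, namely $\myXnorm{\myRBstate[\myv]} \le (1+\myhypereps)\myXnorm{\mystate[\myv]}$, obtained from \eqref{eq:Bayes:RBapprox} and the triangle inequality. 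This yields $\mynoisenorm{\myobs{\mystate[\myv]}}/\myXnorm{\mystate[\myv]} \le (1+\myhypereps)\myobsrbinfsup + \myobscont \myhypereps$, and I would then remove $\myobsrbinfsup$ using the left-hand inequality of Proposition \ref{thm:Bayes:obsinfsup:approx} in rearranged form, $\myobsrbinfsup \le (\myobsinfsup + \myobscont \myhypereps)/(1-\myhypereps)$, which upon substitution gives precisely the claimed bound.

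The main obstacle is the bookkeeping in this second part: unlike \eqref{eq:Bayes:argsubstituion:G}, where only the numerator is perturbed, here the denominator must be controlled from both sides, and it is the one-sided estimate $\myXnorm{\myRBstate[\myv]} \ge (1-\myhypereps)\myXnorm{\mystate[\myv]}$ buried in Proposition \ref{thm:Bayes:obsinfsup:approx} that forces the $1/(1-\myhypereps)$ in the final constant and hence the asymmetric factor $(1+\myhypereps)/(1-\myhypereps)$. Everything else reduces to the same chain of triangle inequalities already carried out in the two preceding propositions.
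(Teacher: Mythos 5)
Your proof is correct and takes essentially the same route as the paper's: the same triangle-inequality splitting of $\mynoisenorm{\myobs{\mystate[\myv]}}$, the identification of the reduced-order quotient at $\myv$ with $\myparatoobsrbinfsup$ (respectively $\myobsrbinfsup$ together with the ratio bound $\myXnorm{\myRBstate[\myv]} \le (1+\myhypereps)\myXnorm{\mystate[\myv]}$), and the final elimination of the surrogate coefficient via the lower bounds in Propositions \ref{thm:Bayes:paratoobsinfsup:approx} and \ref{thm:Bayes:obsinfsup:approx}. The only difference is expository: you make explicit the rearrangements and well-definedness of the quotients that the paper leaves implicit.
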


\begin{proof}
For both \eqref{eq:Bayes:argsubstituion:G} and \eqref{eq:Bayes:argsubstituion:W} the lower bound follows directly from definitions \eqref{def:Bayes:paratoobsinfsup} and \eqref{def:Bayes:obsinfsup}.
To prove the upper bound in \eqref{eq:Bayes:argsubstituion:G}, let $\myv \in \arg \inf_{\myu \in \myU} \mypriornorm{\myu}^{-1}\mynoisenorm{\myobs{\myRBstate}}$.
Following the same steps as in the proof of Proposition \ref{thm:Bayes:paratoobsinfsup:approx}, we can then bound
\begin{align*}
    \frac{\mynoisenorm{\myobs{\mystate[\myv]}}}{\mypriornorm{\myv}}
    \le \frac{\mynoisenorm{\myobs{\myRBstate[\myv]}}}{\mypriornorm{\myv}} + \frac{\mynoisenorm{\myobs{(\mystate[\myv]-\myRBstate[\myv])}}}{\mypriornorm{\myv}}
    \le \myparatoobsrbinfsup + \myobscont \myetasup \myhypereps.
\end{align*}
The upper bound in \eqref{eq:Bayes:argsubstituion:G} then follows with Proposition \ref{thm:Bayes:paratoobsinfsup:approx}.

To prove the upper bound in \eqref{eq:Bayes:argsubstituion:W}, let $\myv \in \arg \inf_{\myu \in \myU} \myXnorm{\myRBstate}^{-1}\mynoisenorm{\myobs{\myRBstate}}$.
Then 
\begin{align*}
    \frac{\mynoisenorm{\myobs{\mystate[\myv]}}}{\myXnorm{\mystate[\myv]}}
    \le \frac{\mynoisenorm{\myobs{\myRBstate[\myv]}}}{\myXnorm{\myRBstate[\myv]}} \frac{\myXnorm{\myRBstate[\myv]}}{\myXnorm{\mystate[\myv]}} + \frac{\mynoisenorm{\myobs{(\mystate[\myv]-\myRBstate[\myv])}}}{\myXnorm{\mystate[\myv]}}
    \le (1+\varepsilon) ~ \myobsrbinfsup + \myobscont \myhypereps.
\end{align*}
The result then follows with Proposition \ref{thm:Bayes:obsinfsup:approx}.
\end{proof}

\section{Sensor selection}
\label{sec:Bayes:sensors}

In the following, we present a sensor selection algorithm that iteratively increases the minimal observability coefficient $\min_{\mypara \in \myparadomain} \myparatoobsinfsup$ and thereby decreases the upper bound for the eigenvalues of the posterior covariance matrix for all admissible system configurations $\mypara \in \myparadomain$.
%By choosing the sensors iteratively, $\myparatoobsinfsup$ is not necessarily maximized for any configuration $\mypara$. \question[inline]{A bit confusing and weak statement for the ``not necessarily..."}
%However, t
The iterative approach is relatively easy to implement, allows a simple way of dealing with combinatorial restrictions, and can deal with large\footnote{
For instance, in Section \ref{sec:Bayes:results:unrestricted} we apply the presented algorithm to a library with $\mylibrarydim = 11,045$ available sensor positions.
} sensor libraries.

% \myblue{
% To introduce our sensor selection algorithm, we start this section with a Cholesky decomposition of the norm $\mynoisenorm{\cdot}$ that facilitates a cost-efficient comparison between observation operators under expansion with new sensors.
% In Section \ref{sec:obsComp}, we then detail how the reduced-order observability coefficients $\myobsrbinfsup$ and $\myparatoobsrbinfsup$ can be computed.
% With these two ingredients, we then introduce the \ac{greedyOMP} algorithm in Section \ref{sec:algorithm} for observability-based sensor selection in a hyper-parameterized setting.
% }
\subsection{Cholesky decomposition}
\label{sec:Cholesky}

The covariance function $\myCov$ connects an observation operator $\myobsmap$ to its observability coefficients $\myparatoobsinfsup$ and $\myobsinfsup$ through the noise covariance matrix $\mynoiseCov$.
Its inverse enters the norm $\mynoisenorm{\cdot}$ and the posterior covariance matrix $\mypostCov$.
The inversion poses a challenge when the noise is correlated, i.e., when $\mynoiseCov$ is not diagonal, as even the expansion of $\myobsmap$ with a single sensor $\myobskmap[] \in \mylibrary$ changes each entry of $\mynoiseCovInv$.
In naive computations of the observability coefficients and the posterior covariance matrix, this leads to $\myfedimU$ dense linear system solves of order $\mycurlyO{(\mydatadim+1)^3}$ each time the observation operator is expanded.
In the following, we therefore expound on how $\mynoiseCovInv$ changes under expansion of $\myobsmap$ to exploit its structure when comparing potential sensor choices.

Suppose $\myobsmap = \mytranspose{\myobskmap[1], \dots, \myobskmap[\mydatadim]}$ has already been chosen with sensors $\myobskmap \in \myXdual$, but shall be expanded by another sensor $\ell$ to
\begin{align*}
\myextendedobsmap &:= \mytranspose{\myobskmap[1], \dots, \myobskmap[\mydatadim], \myobskmap[]} : \myX \rightarrow \mathbb{R}^{\mydatadim+1}.
\end{align*}
Following definition \eqref{def:Bayes:mynoiseCov}, the noise covariance matrix $\mynoiseCov[\myextendedobsmap]$ of the expanded operator $\myextendedobsmap$ has the form
\begin{align*}
\mynoiseCov[\myextendedobsmap] =
\left(
\begin{array}{cc}
\mynoiseCovobs & \myCholvector \\
\myCholvector^T & \myCholvar
\end{array}
\right) = 
\left(
\begin{array}{cc}
\mynoiseChol & \mathbf{0} \\
\myCholvectorw^T & \myCholbeta
\end{array}
\right)
\left(
\begin{array}{cc}
\mynoiseCholadj & \myCholvectorw \\
\mathbf{0} & \myCholbeta
\end{array}
\right),
\end{align*}
where $\mynoiseChol \mynoiseCholadj = \mynoiseCov \in \mathbb{R}^{\mydatadim \times \mydatadim}$ is the Cholesky decomposition of the \ac{spd}\ noise covariance matrix $\mynoiseCov$ for the original observation operator $\myobsmap$, and $\myCholvector, \myCholvectorw \in \mathbb{R}^{\mydatadim}$, $\myCholvar, \myCholbeta \in \mathbb{R}$ are defined through
\begin{align*}
\myveceval{\myCholvector}{i} &:= \myCov(\myobskmap[i], \myobskmap[]), &
\myCholvectorw &:= \mynoiseCholinv \myCholvector, \\
\myCholvar &:= \myCov(\myobskmap[], \myobskmap[]), &
\myCholbeta &:= \sqrt{\myCholvar - \myCholvectorw^T \myCholvectorw}.
\end{align*}
Note that $\mynoiseCov[\myextendedobsmap]$ is \ac{spd}\ by the assumptions posed on $\myCov$ in Section \ref{sec:Bayes:problem}; consequently, $\myCholbeta$ is well-defined and strictly positive.
With this factorization, the expanded Cholesky matrix $\mynoiseChol[\myextendedobsmap]$ with $\mynoiseChol[\myextendedobsmap] \mynoiseChol[\myextendedobsmap]^T = \mynoiseCov[\myextendedobsmap]$ can be computed in $\mycurlyO{\mydatadim^2}$, dominated by the linear system solve with the triangular $\mynoiseChol$ for obtaining $\myCholvectorw$.
It is summarized in Algorithm \ref{alg:Bayes:Cholesky} for later use in the sensor selection algorithm.

\begin{algorithm}[t]
\DontPrintSemicolon
\KwIn {observation operator $\myobsmap = \mytranspose{\myobskmap[1], \dots, \myobskmap[\mydatadim]}$, noise covariance matrix $\mynoiseCov$, Cholesky matrix $\mynoiseChol $, new sensor $\ell  \in \myXdual$}
%\KwOut {expanded observation operator $\myextendedobsmap$ with noise covariance matrix $\mynoiseCov[\myextendedobsmap]\in \mathbb{R}^{(\myiter + 1) \times (\myiter + 1)}$ and Cholesky matrix $\mynoiseCov[\myextendedobsmap] \in \mathbb{R}^{(\myiter + 1) \times (\myiter + 1)}$}
%\vspace{0.2cm}
\vspace{0.2cm}
$\myobsmap \gets \mytranspose{\myobskmap[1], \dots, \myobskmap[\mydatadim], \ell}$ \tcp*[r]{operator expansion}
\If{$\myiter = 0$}{
$\mynoiseCov \gets \left(\myCov(\myobskmap[], \myobskmap[])\right), ~ \mynoiseChol \gets  \left(\sqrt{\myCov(\myobskmap[], \myobskmap[])}\right) \in \mathbb{R}^{1 \times 1}$ \tcp*[r]{first sensor}
}\Else{
$\mathbf{v} \gets \mytranspose{\myCov(\myobskmap[1], \myobskmap[]), \dots, \myCov(\myobskmap[\myiter], \myobskmap[])} \in \mathbb{R}^{\myiter}$ \tcp*[r]{matrix expansion}
$\mathbf{w} \gets \mynoiseChol^{-1}\mathbf{v} \in \mathbb{R}^{\myiter}$, $s \gets \myCov(\myobskmap[], \myobskmap[])$, $c \gets s - \mathbf{w}^T \mathbf{w} \in \mathbb{R}$ \;
$\mynoiseCov \gets \left(
\begin{array}{cc}
\mynoiseCov & \mathbf{v} \\
\mathbf{v}^T & s
\end{array}
\right) $, 
$\mynoiseChol \gets \left(
\begin{array}{cc}
\mynoiseChol & \myzeromatrix \\
\mathbf{w}^{T}& c
\end{array}
\right) \in \mathbb{R}^{(\myiter + 1) \times (\myiter + 1)}$ \;
}
\vspace{0.2cm}
\Return $\myobsmap$, $\mynoiseCov$, $\mynoiseChol$
\caption{\texttt{CholeskyExpansion}}
\label{alg:Bayes:Cholesky}
\end{algorithm}

Using the Cholesky decomposition, the inverse of $\mynoiseCov[\myextendedobsmap]$ factorizes to
\begin{align*}
\mynoiseCov[\myextendedobsmap]^{-1} =
\left(
\begin{array}{cc}
\mynoiseCholadj & \myCholvectorw \\
\mathbf{0} & \myCholbeta
\end{array}
\right)^{-1}
\left(
\begin{array}{cc}
\mynoiseChol & \mathbf{0} \\
\myCholvectorw^T & \myCholbeta
\end{array}
\right)^{-1}
=
\left(
\begin{array}{cc}
\mynoiseCholinvadj & \myCholvectorx \\
\mathbf{0} & 1/\myCholbeta
\end{array}
\right)
\left(
\begin{array}{cc}
\mynoiseCholinv & \mathbf{0} \\
\myCholvectorx^T & 1/\myCholbeta
\end{array}
\right),
\end{align*}
where 
\begin{align*}
\myCholvectorx := - \frac{1}{\myCholbeta} \mynoiseCholinvadj \myCholvectorw =  - \frac{1}{\myCholbeta} \mynoiseCholinvadj  \mynoiseCholinv \myCholvector = - \frac{1}{\myCholbeta} \mynoiseCovinvobs \myCholvector.
\end{align*}
For an arbitrary state $\myx \in \myX$, the norm of the extended observation $\myextendedobsmap(\myx) = \mytranspose{\myobs{\myx}^T, \myobsk[]{\myx}} \in \mathbb{R}^{\mydatadim + 1}$ in the corresponding norm $\mygennorm{\cdot}{\mynoiseCov[\myextendedobsmap]^{-1}}$ is hence connected to the original observation $\myobs{\myx} \in \mathbb{R}^{\mydatadim}$ in the original norm $\mynoisenorm{\cdot}$ via
\begin{equation} \label{eq:Bayes:observabilityimprovement}
\begin{aligned}
\mygennorm{~\myextendedobsmap(\myx)~}{\mynoiseCov[\myextendedobsmap]^{-1}}^2 
&= \left(
\begin{array}{c}
\myobs \mygenstate \\
\myobsk[]{\myx}
\end{array}
\right)^T
\left(
\begin{array}{cc}
\mynoiseCovobs & \myCholvector \\
\myCholvector^T & \myCholvar
\end{array}
\right)^{-1}
\left(
\begin{array}{c}
\myobs \mygenstate \\
\myobsk[]{\myx}
\end{array}
\right)\\
&=
\left(
\begin{array}{c}
\myobs \mygenstate \\
\myobsk[]{\myx}
\end{array}
\right)^T
\left(
\begin{array}{cc}
\mynoiseCholinvadj & \myCholvectorx \\
\mathbf{0} & 1/\myCholbeta
\end{array}
\right)
\left(
\begin{array}{cc}
\mynoiseCholinv & \mathbf{0} \\
\myCholvectorx^T & 1/\myCholbeta
\end{array}
\right)
\left(
\begin{array}{c}
\myobs \mygenstate \\
\myobsk[]{\myx}
\end{array}
\right) \\
&=
\left(
\begin{array}{c}
\mynoiseCholinv \myobs \mygenstate \\
\myCholvectorx^T \myobs \mygenstate + \myobsk[]{\myx} / \myCholbeta
\end{array}
\right)^T
\left(
\begin{array}{c}
\mynoiseCholinv \myobs \mygenstate \\
\myCholvectorx^T \myobs \mygenstate + \myobsk[]{\myx} / \myCholbeta
\end{array}
\right) \\
&= (\myobs \mygenstate)^T \mynoiseCholinvadj  \mynoiseCholinv \myobs \mygenstate  + (\myCholvectorx^T \myobs \mygenstate + \myobsk[]{\myx} / \myCholbeta)^2\\
&= \mynoisenorm{\myobs \mygenstate}^2 + (\myCholvectorx^T \myobs{ \mygenstate} + \myobsk[\mydatadim+1]{\myx} / \myCholbeta)^2 \\
&\ge \mynoisenorm{\myobs \mygenstate}^2.
\end{aligned}
\end{equation}
We conclude from this result that the norm $\mynoisenorm{\myobs \mygenstate}$ of any observation, and therefore also the continuity coefficient $\myobscont$ defined in \eqref{def:Bayes:obscont}, is increasing under expansion of $\myobsmap$ despite the change in norms.
For any configuration $\mypara$, the observability coefficients $\myparatoobsinfsup$ and $\myobsinfsup$ are thus non-decreasing when sensors are selected iteratively.

\begin{algorithm}[t]
\DontPrintSemicolon
\KwIn {observation operator $\myobsmap = \mytranspose{\myobskmap[1], \dots, \myobskmap[\mydatadim]}$, Cholesky matrix $\mynoiseChol $, sensor candidate $\ell \in \myXdual$, state $\myx \in \myX$}
\vspace{0.2cm}
$\mydata \gets \myobs{\myx}$,
$\mathbf{z} \gets \mynoiseChol^{-1} \mydata$ \tcp*[r]{preparation}
\If{$\myiter = 0$}{
\Return $\myobsk[]{\myxiter}^2 / \myCov(\myobskmap[], \myobskmap[])$ \tcp*[r]{one sensor only}
}\Else{
$\mathbf{v} \gets \mytranspose{\myCov(\myobskmap[1], \myobskmap[]), \dots, \myCov(\myobskmap[\myiter], \myobskmap[])} \in \mathbb{R}^{\myiter}$\tcp*[r]{general case}
$\mathbf{w} \gets \mynoiseChol^{-1}\mathbf{v} \in \mathbb{R}^{\myiter}$\;
\Return $\frac{\left( \myobsk[]{\myxiter} -\mathbf{w}^T \mathbf{z} \right)^2}{\myCov(\myobskmap[], \myobskmap[]) - \mathbf{w}^T \mathbf{w}}$
}
\caption{\texttt{ObservabilityGain}}
\label{alg:Bayes:observabilityGain}
\end{algorithm}

Given a state $\mygenstate \in \myX$ and an observation operator $\myobsmap$, we can determine the sensor $\myobskmap[\mydatadim+1] \in \mylibrary$ that increases the observation of $\mygenstate$ the most by comparing the increase $(\myCholvectorx^T \myobs{ \mygenstate} + \myobsk[]{\myx} / \myCholbeta)^2$ for all $\ell \in \mylibrary$.
Algorithm \ref{alg:Bayes:observabilityGain} summarizes the computation of this observability gain for use in the sensor selection algorithm (see Section \ref{sec:algorithm}).
Its general runtime is determined by $\mydatadim + 1$ sensor evaluations and two linear solves with the triangular Cholesky matrix $\mynoiseChol$ in $\mycurlyO{\mydatadim^2}$.
When called with the same $\myobsmap$ and the same state $\myx$ for different candidate sensors $\ell$, the preparation step must only be performed once, which reduces the runtime to one sensor evaluation and one linear system solve in all subsequent calls.
Compared to computing $\mygennorm{~\myextendedobsmap(\myx)~}{\mynoiseCov[\myextendedobsmap]^{-1}}^2$ for all $\mylibrarydim$ candidate sensors in the library $\mylibrary$, we save $\mycurlyO{\mylibrarydim \mydatadim^2}$.

\subsection{Computation of the observability coefficient}
\label{sec:obsComp}

We next discuss the computation of the observability coefficient $\myparatoobsinfsup$ for a given configuration $\mypara$ and observation operator $\myobsmap$.

Let $\mypriorCov = \mySVDU^T \mySVDSigma  \mySVDU$ be the eigenvalue decomposition of the \ac{spd}\ prior covariance matrix with $\mySVDU = \mymatrixsum{\myUfebasis[1], \dots, \myUfebasis[\myfedimU]} \in \mathbb{R}^{\myfedimU \times \myfedimU}$, $\myUfebasis \in \mathbb{R}^{\myfedimU}$ orthonormal in the Euclidean inner product, and $\mySVDSigma  = \diag(\mySVDSigmaEntry[1], \dots, \mySVDSigmaEntry[\myfedimU])$ a diagonal matrix containing the eigenvalues $\mySVDSigmaEntry[1] \ge \dots \ge \mySVDSigmaEntry[\myfedimU] > 0$ in decreasing order.
Using the eigenvector basis $\{\myUfebasis[m]\}_{m=1}^{\myfedimU}$, we define the matrix 
\begin{align}\label{eq:Bayes:Mallobservations}
\mathbf{M}(\mypara) := \mymatrixsum{
\myobs{\mystate[{\myUfebasis[1]}]},
\dots,
\myobs{\mystate[{\myUfebasis[\myfedimU]}]}
} \in \mathbb{R}^{\myiter \times \myfedimU}
\end{align}
featuring all observations of the associated states $\mystate[{\myUfebasis}]$ for the configuration $\mypara$.
The observability coefficient $\myparatoobsinfsup$ can then be computed as the square root of the minimum eigenvalue $\myeigvalmin$ of the generalized eigenvalue problem
\begin{align}\label{eq:Bayes:obscoef:comp}
\mathbf{M}(\mypara)^T \mynoiseChol^{-T}\mynoiseChol^{-1}\mathbf{M}(\mypara)  \myuvec_{\min}
&= \myeigvalmin \mySVDSigma^{-1}\myuvec_{\min}.
\end{align}
Note that \eqref{eq:Bayes:obscoef:comp} has $\myfedimU$ real, non-negative eigenvalues because the matrix on the left is symmetric positive semi-definite, and $\mySVDSigma$ is \ac{spd}\ (c.f. \cite{Golub2013}).
The eigenvector $\myuvec_{\min}$ contains the basis coefficients in the eigenvector basis $\{\myUfebasis[m]\}_{m=1}^{\myfedimU}$ of the ``worst-case" parameter, i.e. the infimizer of $\myparatoobsinfsup$.

\begin{remark}
For computing $\myobsinfsup$, we exchange the right-hand side matrix $\mySVDSigma^{-1}$ in \eqref{eq:Bayes:obscoef:comp} with the $\myX$-inner-product matrix for the states $\mystate[{\myUfebasis[1]}], \dots, \mystate[{\myUfebasis[\myfedimU]}]$.
\end{remark}

The solution of the eigenvalue problem can be computed in $\mycurlyO{\myfedimU^3}$, with an additional $\mycurlyO{\myfedimU \mydatadim^2 + \myfedimU^2 \mydatadim}$ for the computation of the left-hand side matrix in \eqref{eq:Bayes:obscoef:comp}.
The dominating cost is hidden in $\mathbf{M}(\mypara)$ since it requires $\mydatadim \myfedimU$ sensor observations and $\mydatadim$ full-order model solves.
To reduce the computational cost, we therefore approximate $\myparatoobsinfsup$ with $\myparatoobsrbinfsup$ by exchanging the full-order states $\mystate[\myUfebasis]$ in \eqref{eq:Bayes:Mallobservations} with their reduced-order approximations $\myRBstate[\myUfebasis]$.
The procedure is summarized in Algorithm \ref{alg:Bayes:obscoef}.

\begin{algorithm}[t]
\DontPrintSemicolon
\KwIn {configuration $\mypara \in \myparadomain$, observation operator $\myobsmap = \mytranspose{\myobskmap[1], \dots, \myobskmap[\mydatadim]}$ with $\mydatadim > 0$, Cholesky matrix $\mynoiseChol$}
\vspace{0.2cm}
$N \gets \min\{ \myfedimU, \mydatadim \}$ \tcp*[r]{parameter restriction}
$\mathbf{M} \gets 
\mymatrixsum{
\myobs{\myRBstate[{\myUfebasis[1]}]},
\dots,
\myobs{\myRBstate[{\myUfebasis[N]}]}}$, $\mathbf{S} \gets \myveceval{\myXinner{\mystate[{\myUfebasis[i]}]}{\mystate[{\myUfebasis[j]}]}}{i,j =1}^{N}$ \tcp*[r]{matrix setup}
%$\mathbf{D} \gets \diag(\mySVDSigmaEntry[1], \dots, \mySVDSigmaEntry[N])$
Find  $(\myeigvalmin, \myuvec_{\min})$ of $\mytranspose{\mynoiseChol^{-1}
\mathbf{M}}\left[\mynoiseChol^{-1}
\mathbf{M}\right]
 \myuvec_{\min}
= \myeigvalmin \mathbf{S} \myuvec_{\min}$ \tcp*[r]{eigenvalue problem}
\vspace{0.2cm}
\Return {$\sqrt{\myeigvalmin}$, $\myuvec_{\min}$}
\caption{\texttt{SurrogateObservability}}
\label{alg:Bayes:obscoef}
\end{algorithm}

\begin{remark}
If $\mydatadim < \myfedimU$, Algorithm \ref{alg:Bayes:obscoef} restricts the parameter space, as discussed in Section \ref{sec:Bayes:parameterreduction}, to the span of the first $\mydatadim$ eigenvectors $\myUfebasis[1], \dots, \myUfebasis[\mydatadim]$ encoding the least certain directions in the prior.
A variation briefly discussed in \cite{Binev2018b} in the context of the \ac{pbdw} method to prioritize the least certain parameters even further is to only expand the parameter space once the observability coefficient on the subspace surpasses a predetermined threshold.
\end{remark}

\subsection{Sensor selection}
\label{sec:algorithm}

In our sensor selection algorithm, we iteratively expand the observation operator $\myobsmap$ and thereby increase the observability coefficient $\myobsmap$ for all $\mypara \in \myparadomain$.
Although this procedure cannot guarantee finding the maximum observability over all sensor combinations, the underlying greedy searches are well-established in practice, and can be shown to perform with exponentially decreasing error rates in closely related settings, see \cite{Binev2011a, Binev2018b, Cohen2020,Buffa2012,Jagalur-Mohan2021}.
In each iteration, the algorithm performs two main steps:
\begin{itemize}
\item A \textbf{greedy search} over a training set $\myparadomaintrain \subset \myparadomain$ to identify the configuration $\mypara \in \myparadomaintrain$ for which the observability coefficient $\myparatoobsinfsup$ is minimal;
\item A \textbf{data-matching step} to identify the sensor in the library that maximizes the observation of the ``worst-case" parameter at the selected configuration $\mypara$.
\end{itemize}
The procedure is summarized in Algorithm \ref{alg:Bayes:greedyOMP}.
It terminates when $\mydatadimmax \le \mylibrarydim$ sensors have been selected.\footnote{This termination criterion can easily be adapted to prescribe a minimum value of the observability coefficient. This value should be chosen with respect to the observability $\myparatoobsinfsup[\mylibrary]$ achieved with the entire sensor library.}
In the following, we explain its computational details.

\begin{algorithm}[t]
\DontPrintSemicolon
\KwIn {sensor library $\mylibrary \subset \myXdual$, training set $\myparadomaintrain \subset \myparadomain$, maximum number of sensors $\mydatadimmax \le \myabs{\mylibrarydim}$, surrogate model $\myRBmodel$, covariance function $\myCov : \mylibrary \times \mylibrary \rightarrow \mathbb{R}$}
\vspace{0.2cm}
Compute $\mypriorCov = \mytranspose{\myUfebasis[1], \dots, \myUfebasis[\myfedimU]} \mySVDSigma \mymatrixsum{\myUfebasis[1], \dots, \myUfebasis[\myfedimU]}$ \tcp*[r]{eigenvalue decomposition}
For all $\mypara \in \myparadomaintrain$, $1 \le m \le \myfedimU$, compute $\myRBstate[{\myUfebasis[m]}]$\tcp*[r]{preparation}
$\myiter \gets 0$, 
$\myparaiter[0] \gets \text{arg} \max_{\theta \in \myparadomaintrain} \myXnorm{\myRBstate[{\myUfebasis[1]}]}$,
$\myuiter[0] \gets \myUfebasis[1]$ \tcp*[r]{initialization}
%$\myobsmap, \mynoiseCov, \myChol \gets \mymatrixsum{~} \in \mathbb{R}^{0 \times 0}$\;
\vspace{0.2cm}
\While{$\myiter < \mydatadimmax$}{
\vspace{0.2cm}
Solve full-order equation $\mymodel[\myparaiter](\myxiter, \myuiter)$ for $\myxiter$ \tcp*[r]{"worst-case" state}
%$\myxiter \gets \myx_{\myparaiter}(\myuiter) \texttt{ForwardSolve}(\myuiter, \myparaiter)$ \tcp*[r]{worst observed state}
\vspace{0.2cm}
$\myobskmap[\myiter+1] \gets \text{arg} \max_{\ell \in \mylibrary} \mathtt{ObservabilityGain}(\myobsmap, \mynoiseChol, \ell)$
\tcp*[r]{sensor selection}
\vspace{0.2cm}
$\myobsmap, \mynoiseCov, \mynoiseChol \gets \mathtt{CholeskyExpansion}(\myobsmap, \mynoiseCov, \mynoiseChol, \myobskmap[\myiter+1])$ \tcp*[r]{expansion}
$\myiter \gets \myiter + 1$ \;
\vspace{0.2cm}
\For{$\mypara \in \myparadomaintrain$}{
$\myobsrbinfsup, \myuvec_{\min}(\mypara) \gets \mathtt{SurrogateObservability}(\theta, \myobsmap, \mynoiseChol)$ \tcp*[r]{update coefficients}
}
\vspace{0.2cm}
$\myparaiter \gets \text{arg} \min_{\mypara \in \myparadomaintrain} \myobsrbinfsup$ \tcp*[r]{greedy step}
$\myuiter \gets \sum_{m=1}^{\min\{\myfedimU, \myiter\}} \myveceval{\myuvec_{\min}(\myparaiter)}{m} \myUfebasis[m]$ \;
}
\vspace{0.2cm}
\Return {$\myobsmap$, $\mynoiseChol$}
\caption{\texttt{SensorSelection}}
\label{alg:Bayes:greedyOMP}
\end{algorithm}

\subsubsection*{Preparations}

In order to increase $\myparatoobsinfsup$ uniformly over the hyper-parameter domain $\myparadomain$, we consider a finite training set, $\myparadomaintrain \subset \myparadomain$, that is chosen to be fine enough to capture the $\mypara$-dependent variations in $\mystate$.
We assume a reduced-order model is available such that we can compute approximations $\myRBstate[{\myUfebasis[m]}] \approx \mystate[{\myUfebasis[m]}]$ for each $\mypara \in \myparadomaintrain$ and $1 \le m \le \myfedimU$ within an acceptable computation time while guaranteeing the accuracy requirement \eqref{eq:Bayes:RBapprox}.
If necessary, the two criteria can be balanced via adaptive training domains (e.g., \cite{Eftang2010,Eftang2011}).

\begin{remark}
If storage allows (e.g.,~with projection-based surrogate models), we only compute the surrogate states once and avoid unnecessary re-computations when updating the surrogate observability coefficients $\myparatoobsrbinfsup$ in each iteration.
\end{remark}

As a first ``worst-case" parameter direction, $\myuiter[0]$, we choose the vector $\myUfebasis[1]$ with the largest prior uncertainty.
Likewise, we choose the ``worst-case" configuration $\myparaiter \in \myparadomain$ as the one for which the corresponding state $\myRBstate[{\myUfebasis[1]}]$ is the largest.

\subsubsection*{Data-matching step}
In each iteration, we first compute the full-order state $\myxiter = \myx_{\myparaiter}(\myuiter)$ at the ``worst-case'' parameter $\myuiter$ and configuration $\myparaiter$.
We then choose the sensor $\myobskmap[{\myiter+1}]$ which most improves the observation of the ``worst-case'' state $\myxiter$ under the expanded observation operator $[\myobsmap^T, \myobskmap[{\myiter+1}]]^T$ and its associated norm.
We thereby iteratively approximate the information that would be obtained by measuring with all sensors in the library $\mylibrary$.
For fixed $\myparaiter$ and in combination with selecting $\mygenstate$ to have the smallest observability in $\myW$, we arrive at an algorithm similar to \acl{wcomp} (c.f. \cite{Binev2018b, Maday2015e}) but generalized to deal with the covariance function $\myCov$ in the noise model \eqref{eq:Bayes:noisemodel}.

\begin{remark}
We use the full-order state $\myx_{\myparaiter}(\myuiter)$ rather than its reduced-order approximation in order to avoid training on local approximation inaccuracies in the reduced-order model.
Here, by using the ``worst-case" parameter direction $\myuiter$, we only require a single full-order solve per iteration instead of the $\myfedimU$ required for setting up the entire posterior covariance matrix $\mypostCov$.
\end{remark}

\subsubsection*{Greedy step}

We train the observation operator $\myobsmap$ on all configurations $\mypara \in \myparadomaintrain$ by varying for which $\mypara$ the ``worst-case" state is computed.
Specifically, we follow a greedy approach where, in iteration $\myiter$, we choose the minimizer $\myparaiter$ of $\myparatoobsinfsup$ over the training domain $\myparadomaintrain$, i.e., the configuration for which the current observation operator $\myobsmap$ is the least advantageous.
The corresponding ``worst-case" parameter $\myuiter$ is the parameter direction for which the least significant observation is achieved.
By iteratively increasing the observability at the ``worst-case" parameters and hyper-parameters, we increase the minimum of $\myparatoobsinfsup$ throughout the training domain.
%This kind of greedy approach is \myorange{a standard tool} for parameterized \ac{mor}, and its worst-case performance can often be bounded analytically \myred{with respect to the optimal selection strategy} (see \cite{Nemhauser1978, Binev2011a,Buffa2012}).

\begin{remark}
Since the computation of $\myparatoobsrbinfsup$ requires as many reduced-order model solves as needed for the posterior covariance matrix over the surrogate model, it is possible to directly target an (approximated) \ac{oed} utility function in the greedy step in place of $\myobsrbinfsup$ without major concessions in the computational efficiency.
The \ac{omp} step can then still be performed for the ``worst-case" parameter with only one full-order model solve, though its benefit for the utility function should be evaluated carefully.
\end{remark}

\subsubsection*{Runtime}

Assuming the dominating computational restriction is the model evaluation to solve for $\mystate$ -- as is usually the case for \ac{pde} models -- then the runtime of each iteration in Algorithm \ref{alg:Bayes:greedyOMP} is determined by one full-order model evaluation, and $\mylibrarydim$ sensor measurements of the full-order state.
Compared to computed the posterior covariance matrix for the chosen configuration, the \ac{omp} step saves $\myfedim - 1$ full-order model solves.

The other main factor in the runtime of Algorithm \ref{alg:Bayes:greedyOMP} is the $\myabs{\myparadomaintrain} \myfedimU$ reduced-order model evaluations with $\mylibrarydim$ sensor evaluations each that need to be performed in each iteration (unless they can be pre-computed).
The parameter dimension $\myfedimU$ not only enters as a scaling factor, but also affects the cost of the reduced-order model itself since larger values of $\myfedimU$ generally require larger or more complicated reduced-order models to achieve the desired accuracy \eqref{eq:Bayes:RBapprox}.
In turn, the computational cost of the reduced-order model indicates how large $\myparadomaintrain$ may be chosen for a given computational budget.
While some cost can be saved through adaptive training sets and models, overall, this connection to $\myfedimU$ stresses the need for an adequate initial parameter reduction as discussed in Section \ref{sec:Bayes:parameterreduction}.

\section{Numerical Results}
\label{sec:Bayes:results}

We numerically confirm the validity of our sensor selection approach using a geophysical model of a section of the Perth Basin in Western Australia.
The basin has raised interest in the geophysics community due to its high potential for geothermal energy, e.g., \cite{Regenauer-Lieb2007,Corbel2012,Sheldon2012,Schilling2013,Pujol2015}.
We focus on a subsection that spans an area of $63\ \text{km} \times 70\ \text{km}$ and reaches 19 km below the surface.
The model was introduced in \cite{Wellmann2014} and the presented section of the model was discussed extensively in the context of \ac{mor} in \cite{Degen2020b,Degen2020Phd}.
In particular, the subsurface temperature distribution is described through a steady-state heat conduction problem with different subdomains for the geological layers, and local measurements may be obtained through boreholes.
The borehole locations need to be chosen carefully due to their high costs (typically several million dollars, \cite{Bauer2014}), which in turn motivates our application of Algorithm \ref{alg:Bayes:greedyOMP}.
For demonstration purposes, we make the following simplifications to our test model:
1) We neglect radiogenic heat production;
2) we merge geological layers with similar conductive behaviors; and
3) we scale the prior to emphasize the influence of different sensor measurements on the posterior.
All computations were performed in \texttt{Python 3.7} on a computer with a 2.3 GHz Quad-Core Intel Core i5 processor and 16 GB of RAM.
The code will be available in a public GitHub repository for another geophysical test problem.\footnote{The Perth Basin Model is available upon request from the third author.}

\subsection{Model Description}\label{sec:Bayes:results:model}

\begin{figure}
    \centering
    \includegraphics[width = \textwidth]{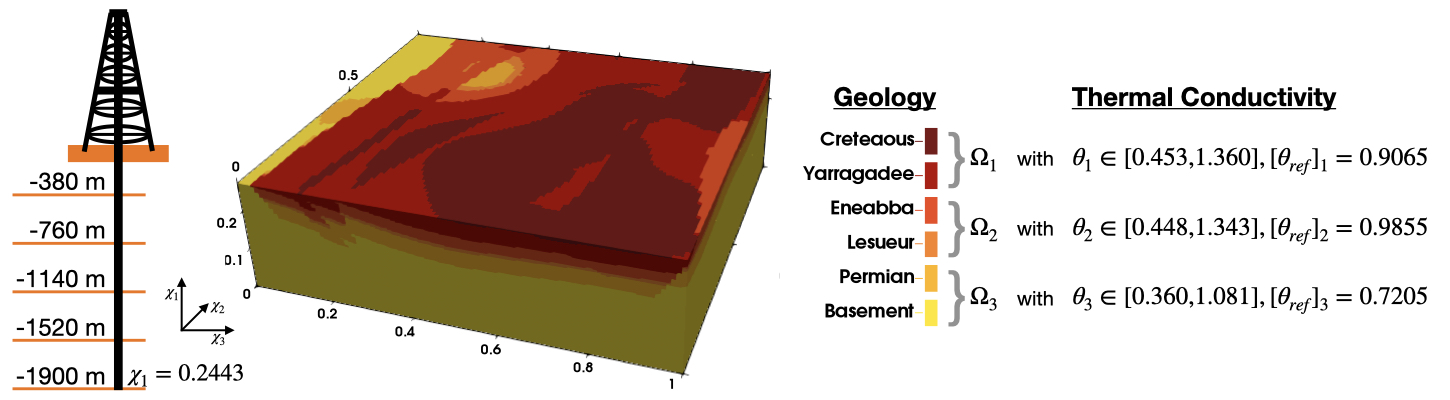}
    \caption{Schematic overview of the Perth Basin section including (merged) geological layers, depths for potential measurements, and configuration range for thermal conductivity $\mypara$ on each subdomain. The bounds are obtained from the reference values (c.f. \cite{Wellmann2014, Degen2020b}) with a $\pm 50\%$ margin. Adapted from \cite{Degen2020b}.}
    \label{fig:Bayes:PerthBasinExerp}
\end{figure}

We model the temperature distribution $\myx_{\mypara}$ with the steady-state \ac{pde}
\begin{align}\label{eq:Bayes:PerthBasinPDE}
- \nabla \left(\mypara \nabla \myx_{\mypara}\right) = 0
\qquad \text{in } \myOmega := (0, 0.2714) \times (0,0.9) \times (0,1) \subset \mathbb{R}^3,
\end{align}
where the domain $\myOmega$ is a non-dimensionalized representation of the basin, and $\mypara : \myOmega \rightarrow \mathbb{R}_{>0}$ the local thermal conductivity.
The section comprises three main geological layers $\myOmega = \bigcup_{i = 1, 2, 3}\myOmega_i$, each characterized by different rock properties, i.e. thermal conductivity $\left. \mypara \right|_{\myOmega_i} \equiv \mypara_i$ shown in Figure \ref{fig:Bayes:PerthBasinExerp}.
We consider the position of the geological layers to be fixed as these are often determined beforehand by geological and geophysical surveys but allow the thermal conductivity to vary.
In a slight abuse of notation, this lets us identify the field $\mypara$ with the vector
\begin{align*}
\mypara = (\mypara_1, \mypara_2, \mypara_3) \in \myparadomain :=  [0.453 , 1.360]\times [0.448 , 1.343] \times [0.360 , 1.081].
\end{align*}
in the hyper-parameter domain $\myparadomain$.

We impose zero-Dirichlet boundary conditions at the surface\footnote{Non-zero Dirichlet boundary conditions obtained from satellite data could be considered via a lifting function and an affine transformation of the measurement data (see \cite{Degen2020Phd}).}, and zero-Neumann (``no-flow") boundary conditions at the lateral faces of the domain.
The remaining boundary $\myInflow$ corresponds to an area spanning 63 km $\times$ 70 km area in the Perth basin 19 km below the surface.
At this depth, local variations in the heat flux have mostly stabilized which makes modeling possible, but since most boreholes -- often originating from hydrocarbon exploration -- are found in the uppermost 2 km we treat it as uncertain.
Specifically, we model it as a Neumann boundary condition
\begin{align*}
\mathbf{n} \cdot \nabla \myx_{\mypara}
&= \myu \cdot \mathbf{p}
&\text{a.e. on } \myInflow := \{0\} \times [0,0.9] \times [0,1]
\end{align*}
where $\mathbf{n} : \myInflow \rightarrow \mathbb{R}^3$ is the outward pointing unit normal on $\myOmega$, $\mathbf{p} : \myInflow \rightarrow \mathbb{R}^5$ is a vector composed of quadratic, $L^2(\myInflow)$-orthonormal polynomials on the basal boundary that vary either in north-south or east-west direction, and $\myu \sim \myprior = \mathcal{N}(\mypriormean, \mypriorCov)$ is a random variable.
The prior is chosen such that the largest uncertainty is attributed to a constant entry in $\mathbf{p}$, and the quadratic terms are treated as the most certain with prior zero.
This setup reflects typical geophysical boundary conditions, where it is most common to assume a constant Neumann heat flux (e.g., \cite{Degen2020b}), and sometimes a linear one (e.g., \cite{Wellmann2014}).
With the quadratic functions, we allow an additional degree of freedom than typically considered.

The problem is discretized using a linear \ac{fe} basis of dimension 132,651.
The underlying mesh was created with \texttt{GemPy} (\cite{Varga2019}) and \acs{moose} (\cite{Permann2020}).
Since the \ac{fe} matrices decouple in $\mypara$, we precompute and store an affine decomposition using \texttt{DwarfElephant} (\cite{Degen2020b}).
Given a configuration $\mypara$ and a coefficient vector $\myu$ for the heat flux at $\myInflow$, the computation of a full-order solution $\mystate \in \myX$ then takes 2.96 s on average.
We then exploit the affine decomposition further to construct a \ac{rb} surrogate model via a greedy algorithm (c.f. \cite{Binev2011a, Dahmen2014}).
Using the inner product\footnote{Note that $\myXinner{\cdot}{\cdot}$ is indeed an inner product due to the Dirichlet boundary conditions.}
$\myXinner{\myx}{\myXtestfct} := \int_{\myOmega} \nabla \myx \cdot \nabla \myXtestfct d\myOmega$ and an \acs{aposteriori} error bound $\myerrorboundPrimal$, we prescribe the relative target accuracy 
\begin{align}\label{eq:Bayes:results:errorbound}
\max_{\myu \in \myU} \frac{\myXnorm{\mystate - \myRBstate}}{\myXnorm{\myRBstate}}
\le \max_{\myu \in \myU} \frac{\myerrorboundPrimal}{\myXnorm{\myRBstate}}
< \varepsilon := \mathtt{1e-4}
\end{align}
to be reached for 511,000 consecutively drawn, uniformly distributed samples of $\mypara$.
The training phase and final computational performance of the \ac{rb} surrogate model are summarized in Figure \ref{fig:Bayes:RBconstruction}.
The speedup of the surrogate model (approximately a factor of 3,000 without error bounds) justifies its offline training time, with computational savings expected already after 152 approximations of $\myparatoobsinfsup$.

\begin{figure}[t]
\centering
\begin{minipage}{0.6\textwidth}
\includegraphics[width=\textwidth]{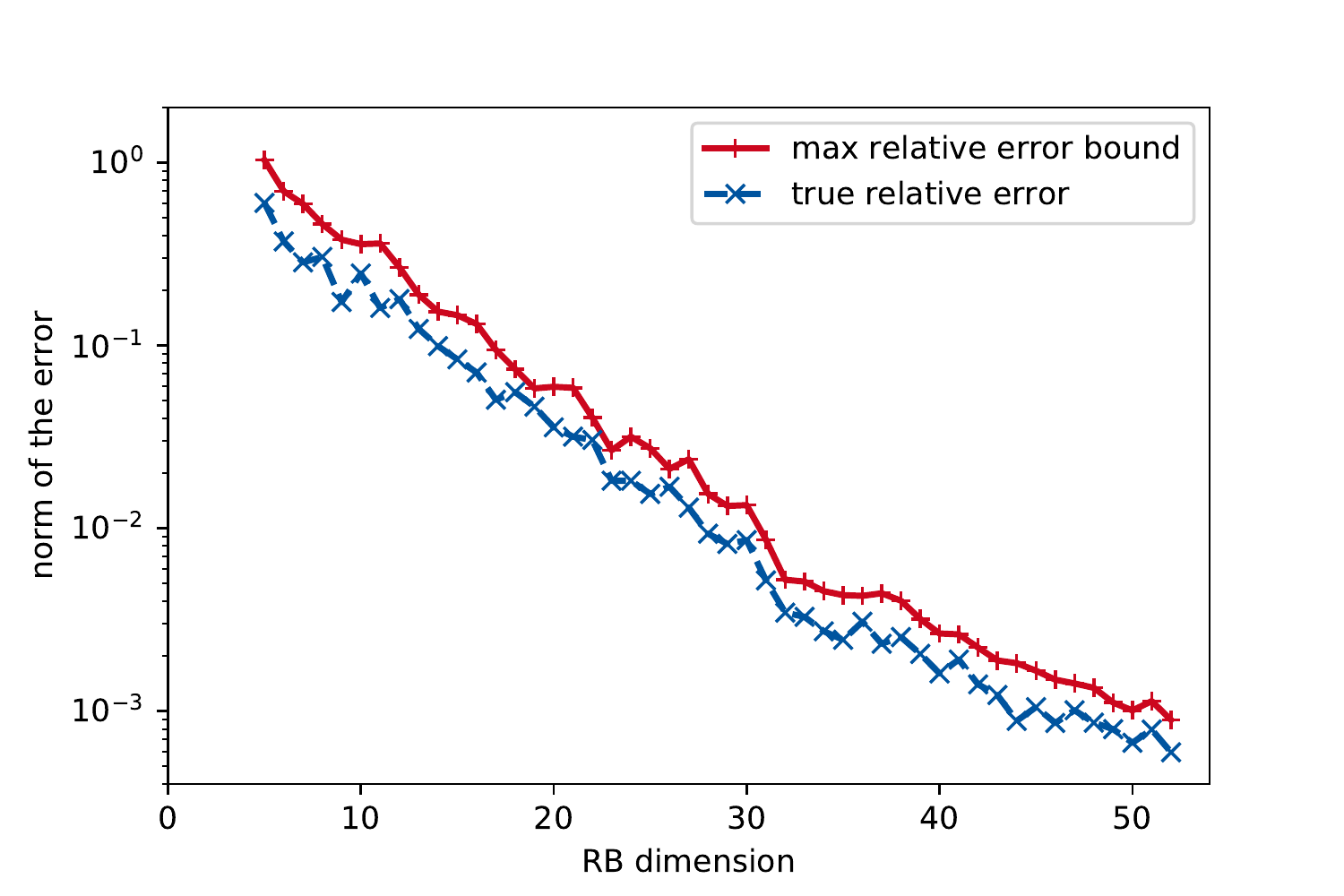} 
\end{minipage}
\begin{minipage}{0.39\textwidth}
\begin{tabular}{ll}
\textbf{Reduced-order model} & \\
RB dimension & 83 \\
training time & 37.58 min \\
training accuracy & \texttt{1e-4} \\
RB solve & 0.97 ms \\
$\hookrightarrow$ speedup & 3,058 \\
RB error bound & 4.78 ms \\
$\hookrightarrow$ speedup & 515
\end{tabular}
\end{minipage}
\caption[Training of the \ac{rb} surrogate model for the Perth Basin section]{Training of the \ac{rb} surrogate model for the Perth Basin section. On the left: Maximum relative error bound \eqref{eq:Bayes:results:errorbound} in the course of the greedy algorithm, computed over the training set $\myparadomaintrain$ together with the true relative error at the corresponding configuration $\mypara$.
On the right: Performance pointers for the obtained \ac{rb} model after \eqref{eq:Bayes:results:errorbound} was reached; online computation times and speedups are averages computed over 1000 randomly drawn configurations $\mypara$.} \label{fig:Bayes:RBconstruction}
\end{figure}

For taking measurements, we consider a $47 \times 47$ grid over the surface to represent possible drilling sites.
At each, a single point evaluation\footnote{Point evaluations are standard for geophysical models because a borehole (diameter approximately 1 m) is very small compared to the size of the model.} of the basin's temperature distribution may be made at any one of five possible depths as shown in Figure \ref{fig:Bayes:PerthBasinExerp}.
In total, we obtain a set $\mylibrary \subset \myOmega$ of $11,045$ admissible points for measurements.
We model the noise covariance between sensors $\myobskmap[\myOmegaPoint], \myobskmap[\tilde{\myOmegaPoint}] \in \mylibrary$ at points $\myOmegaPoint, \tilde{\myOmegaPoint} \in \myOmega$ via
\begin{align*}
\myCov(\myobskmap[\myOmegaPoint],\myobskmap[\tilde{\myOmegaPoint}] )
:= a + b - y(h) 
\end{align*}
with the exponential variogram model
\begin{align*}
y(h) := a + (b-a)\left(\frac32 \max\{\frac{h}{c}, 1\} - \frac12 \max\{\frac{h}{c}, 1\}^3 \right)
\end{align*}
where $h^2 := (\myOmegaPoint_2 - \tilde{\myOmegaPoint}_2)^2 + (\myOmegaPoint_3 - \tilde{\myOmegaPoint}_3)^2$ is the horizontal distance between the points and
\begin{align*}
& a := 2.2054073480730403 && \text{(sill)} \\
& b := 1.6850672040263555 && \text{(nugget)} \\
& c := 20.606782733391228 && \text{(range)}
\end{align*}
The covariance function was computed via kriging (c.f. \cite{Cressie1990}) from the existing measurements \cite{Holgate2010}.
With this covariance function, the noise between measurements at any two sensor locations is increasingly correlated the closer they are on the horizontal plane.
Note that for any subset of sensor locations, the associated noise covariance matrix remains regular as long as each sensor is placed at a distinct drilling location.
We choose this experimental setup because measurements in typical geothermal data sets are often made at the bottom of a borehole (``bottom hole temperature measurements'') within the first 2 km below the surface.

\subsection{Restricted Library}
\label{sec:Bayes:results:restricted}

To test the feasibility of the observability coefficient for sensor selection, we first consider a small sensor library (denoted as $\mylibrary_{5\times 5}$ below) with 25 drilling locations positioned on a $5 \times 5$ grid.
We consider the problem of choosing 8 pair-wise different, unordered sensor locations out of the given 25 positions; this is a combinatorial problem with 1,081,575 possible combinations.

\begin{figure}
\centering
\begin{subfigure}{0.48\textwidth}
\includegraphics[width = \textwidth]{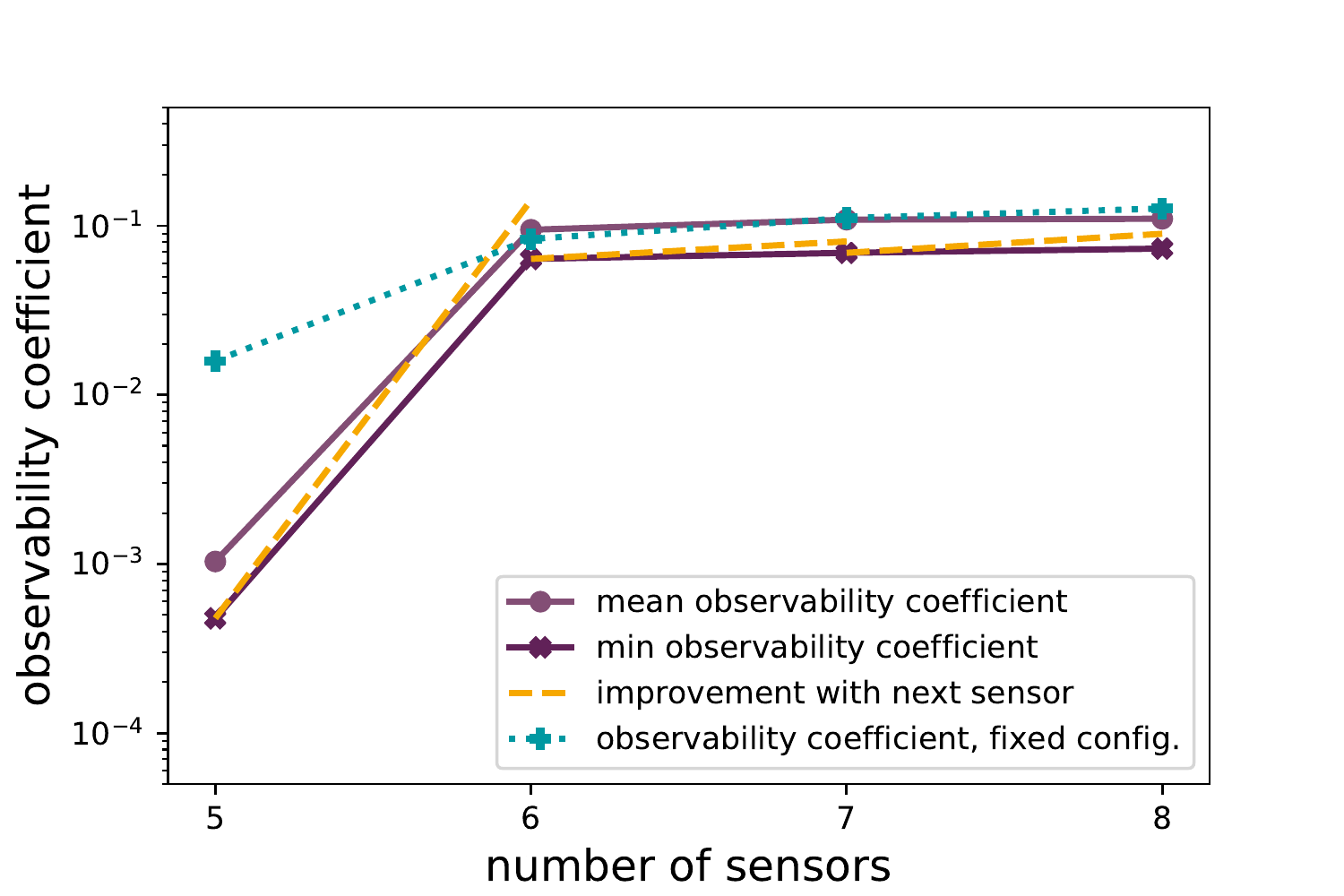} 
\caption{Observability during sensor selection}
\label{fig:Bayes:ex1:obscoefincrease}
\end{subfigure}
\begin{subfigure}{0.48\textwidth}
\includegraphics[width = \textwidth]{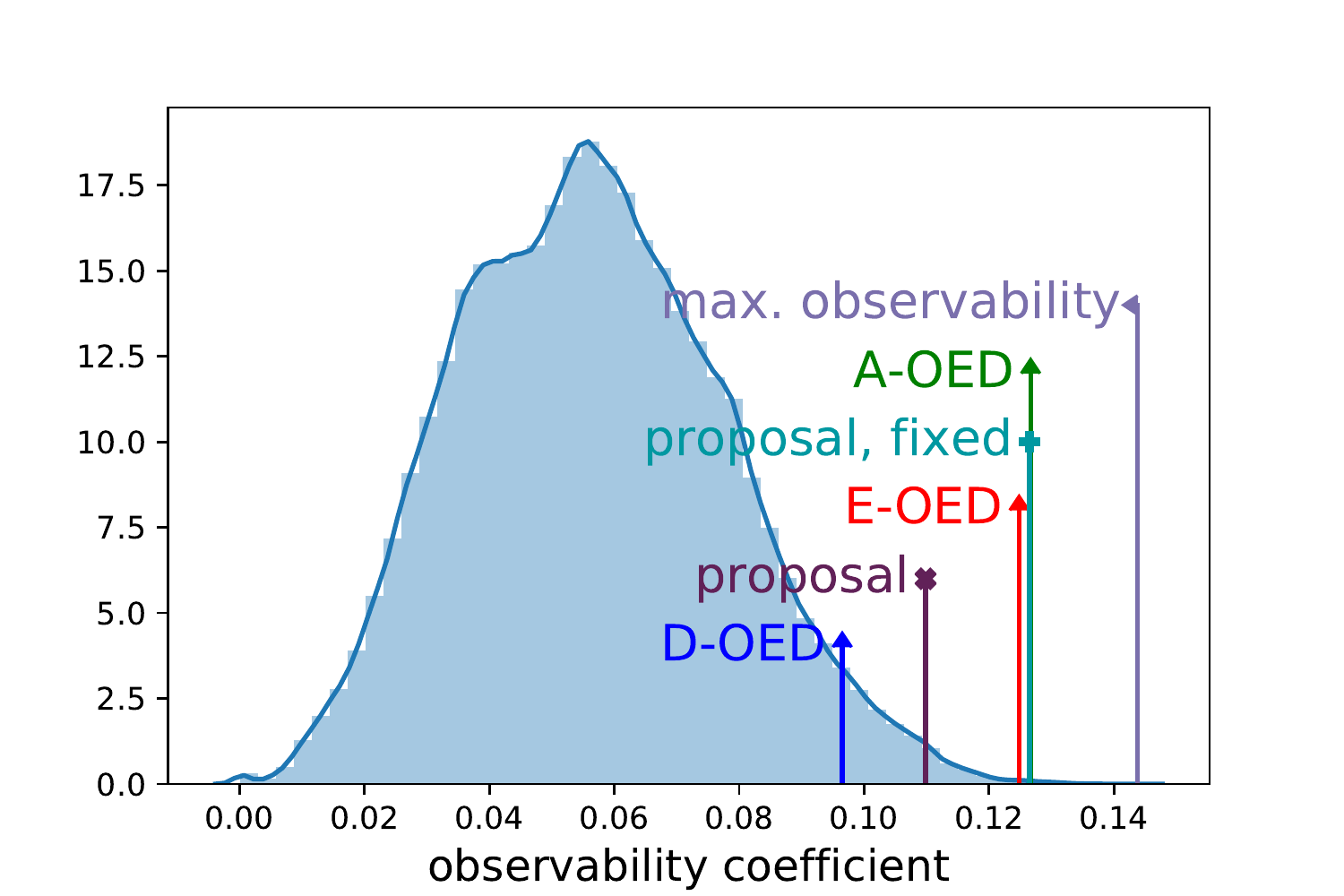} 
\caption{Histogram of $\myparatoobsinfsup[\mypararef]$}
\label{fig:Bayes:ex1:marginObs}
\end{subfigure}
\caption[Observability coefficients for different methods when choosing 8 out of 25 sensor locations]{Observability coefficient for different methods when choosing 8 out of 25 sensor locations.
Left: Minimum and mean over $\mypara$ of $\myparatoobsrbinfsup$ as well as $\myparatoobsinfsup[\mypararef]$ obtained in the course of running Algorithm \ref{alg:Bayes:greedyOMP} once for 512,000 configurations and once for the training set $\{\mypararef\}$.
Right: Distribution of $\myparatoobsinfsup[\mypararef]$ over all possible sensor combinations with indicators for the A-, D-, and E-optimal choices, the combination with maximum observability, and the sensors chosen by the Algorithm \ref{alg:Bayes:greedyOMP} with $\myparadomaintrain$-training (``proposal'', purple, marked ``x'') and $\mypararef$-training (``proposal, fixed'', turquoise, marked ``+'').
Note that the height of the indicator line was chosen solely for readability.
}
\end{figure}

\subsubsection*{Sensor selection}

We run Algorithm \ref{alg:Bayes:greedyOMP}, using the \ac{rb} surrogate model and a training set $\myparadomaintrain \subset \myparadomain$ with 512,000 configurations on an $80 \times 80 \times 80$ regular grid on $\myparadomain$.
When new sensors are chosen, the surrogate observability coefficient $\myparatoobsrbinfsup$ increases monotonously with a strong incline just after the initial $\myfedimU = 5$ sensors, followed by a visible stagnation (see Figure \ref{fig:Bayes:ex1:obscoefincrease}) as is often observed for similar \ac{omp}-based sensor selection algorithms (e.g., \cite{Binev2018b, Maday2015f, Taddei2017d, Aretz-Nellesen2019a}).
%The graphs of the mean and the minimum of the surrogate observability coefficients are approximately parallel.
Algorithm \ref{alg:Bayes:greedyOMP} terminates in 7.93 min with a minimum reduced-order observability of $\myparatoobsrbinfsup = \texttt{7.3227e-2}$ and an average of \texttt{1.0995e-1}.
At the reference configuration $\mypararef$, the full-order observability coefficient is $\myparatoobsinfsup[\mypararef] = 1.0985$, slightly below the reduced-order average.
We call this training procedure ``$\myparadomaintrain$-training" hereafter and denote the chosen sensors as ``$\myparadomaintrain$-trained sensor set'' in the subsequent text and as ``proposal'' in the plots.

In order to get an accurate understanding of how the surrogate model $\myRBstate$ and the large configuration training set $\myparadomaintrain$ influence the sensor selection, we run Algorithm \ref{alg:Bayes:greedyOMP} again, this time restricted on the full-order \ac{fe} model $\mystatepara{\mypararef}{\myu}$ at only the reference configuration $\mypararef$.
The increase in $\myparatoobsinfsup[\mypararef]$ in the course of the algorithm is shown in Figure \ref{fig:Bayes:ex1:obscoefincrease}.
The curve starts significantly above the average for $\myparadomaintrain$-training, presumably because conflicting configurations cannot occur, e.g., when one sensor would significantly increase the observability at one configuration but cause little change in another.
However, in the stagnation phase, the curve comes closer to the average achieved with $\myparadomaintrain$-training.
The computation finishes within 12.53 s, showing that the long runtime before can be attributed to the size of $\myparadomaintrain$.
The final observability coefficient with 8 sensors is $\myparatoobsinfsup[\mypararef] = \texttt{1.2647e-1}$, above the average over $\myparatoobsrbinfsup$ achieved training on $\myparadomaintrain$.
We call this training procedure ``$\mypararef$-training" hereafter, and the sensor configuration ``$\mypararef$-trained" in the text or ``proposal, fixed config." in the plots.

\subsubsection*{Comparison at the reference configuration}

For comparing the performance of the $\myparadomaintrain$- and $\mypararef$-trained sensor combinations, we compute -- at the reference configuration $\mypararef$ -- all 1,081,575 posterior covariance matrices $\mypostCov[\mypararef, \myobsmap]$ for all unordered combinations $\myobsmap$ of 8 distinct sensors in the sensor library $\mylibrary_{5\times 5}$.
For each matrix, we compute the trace (A-\ac{oed} criterion), the determinant (D-\ac{oed} criterion), the maximum eigenvalue (E-\ac{oed} criterion), and the observability coefficient $\myparatoobsinfsup[\mypararef]$.
This lets us identify the A-, D-, and E-optimal sensor combinations.
The total runtime for these computations is 4 min -- well above the 12.53 s of $\mypararef$-training.
The (almost) 8 min for $\myparadomaintrain$-training remain reasonable considering it is trained on $\myabs{\myparadomaintrain} = 512,000$ configurations and not only $\mypararef$.

A histogram for the distribution of $\myparatoobsinfsup[\mypararef]$ is given in Figure \ref{fig:Bayes:ex1:marginObs} with markers for the values of the A-, D-, and E-optimal choices and the $\myparadomaintrain$- and $\mypararef$-trained observation operators.
Out of these five, the D-optimal choice has the smallest value, since the posterior determinant is influenced less by the maximum posterior eigenvalue and hence the observability coefficient.
In contrast, both the A- and E-optimal sensor choices are among the 700 combinations with the largest $\myparatoobsinfsup[\mypararef]$ (this corresponds to the top 0.065\%).
The $\mypararef$-trained sensors have similar observability and are even among the top 500 combinations.
For the $\myparadomaintrain$- trained sensors, the observability coefficient is smaller, presumably because $\myparadomaintrain$-training is not as optimized for $\mypararef$.
Still, it ranks among the top 0.705 \% of sensor combinations with the largest observability.

\begin{figure}
\centering
\begin{subfigure}{0.54\textwidth}
\includegraphics[width = \textwidth]{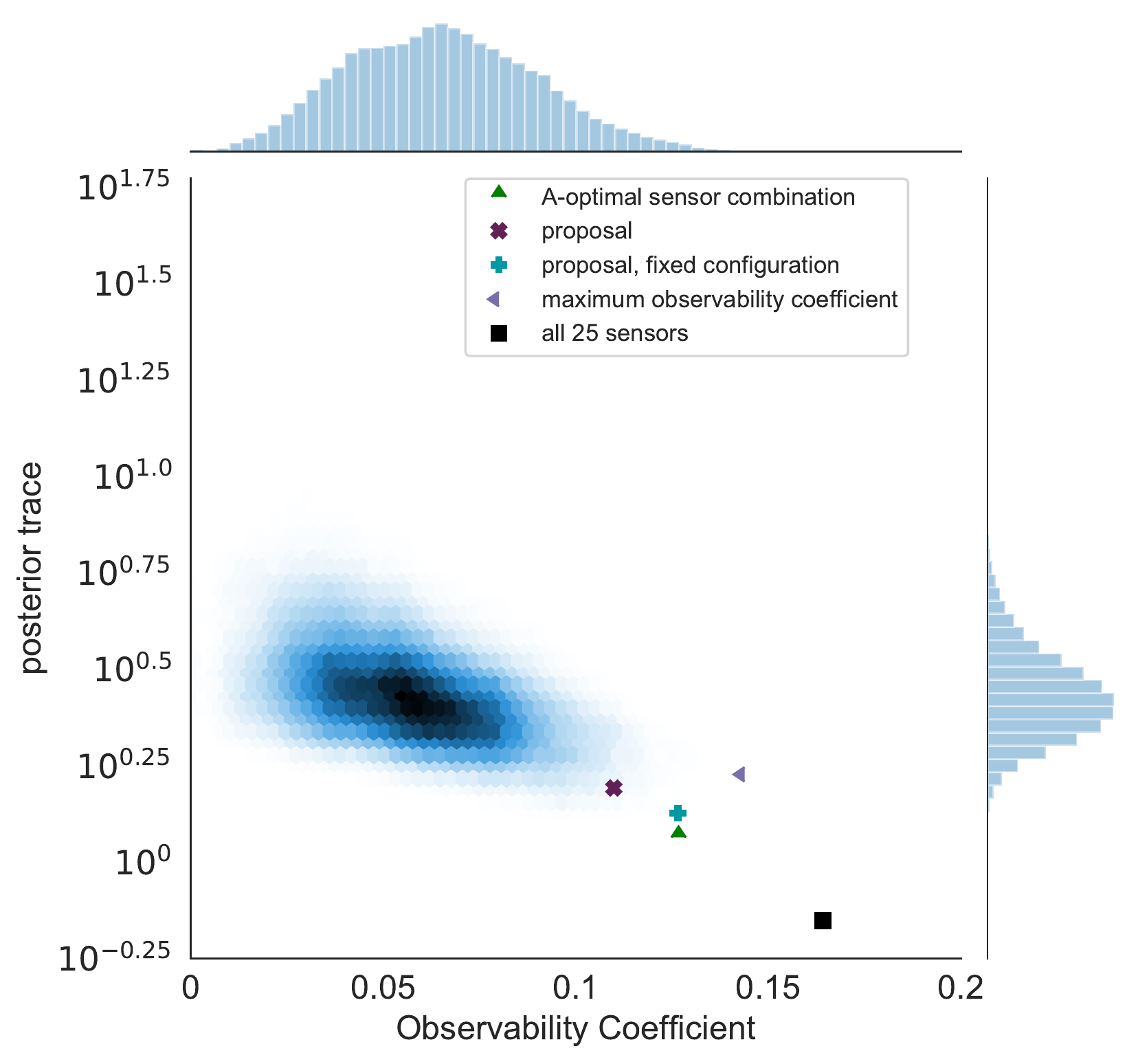} 
\end{subfigure}
\begin{subfigure}{0.45\textwidth}
\centering
~\\[1.3cm]
\begin{tabular}{l|l}
best~~~ & sensor selection \\
\noalign{\smallskip}\hline\noalign{\smallskip}
0.587 \% & proposal \\
0.022 \% & proposal, fixed config. \\
1.778 \% & max. observability
\end{tabular}
~\\[0.5cm]
\includegraphics[width = \textwidth]{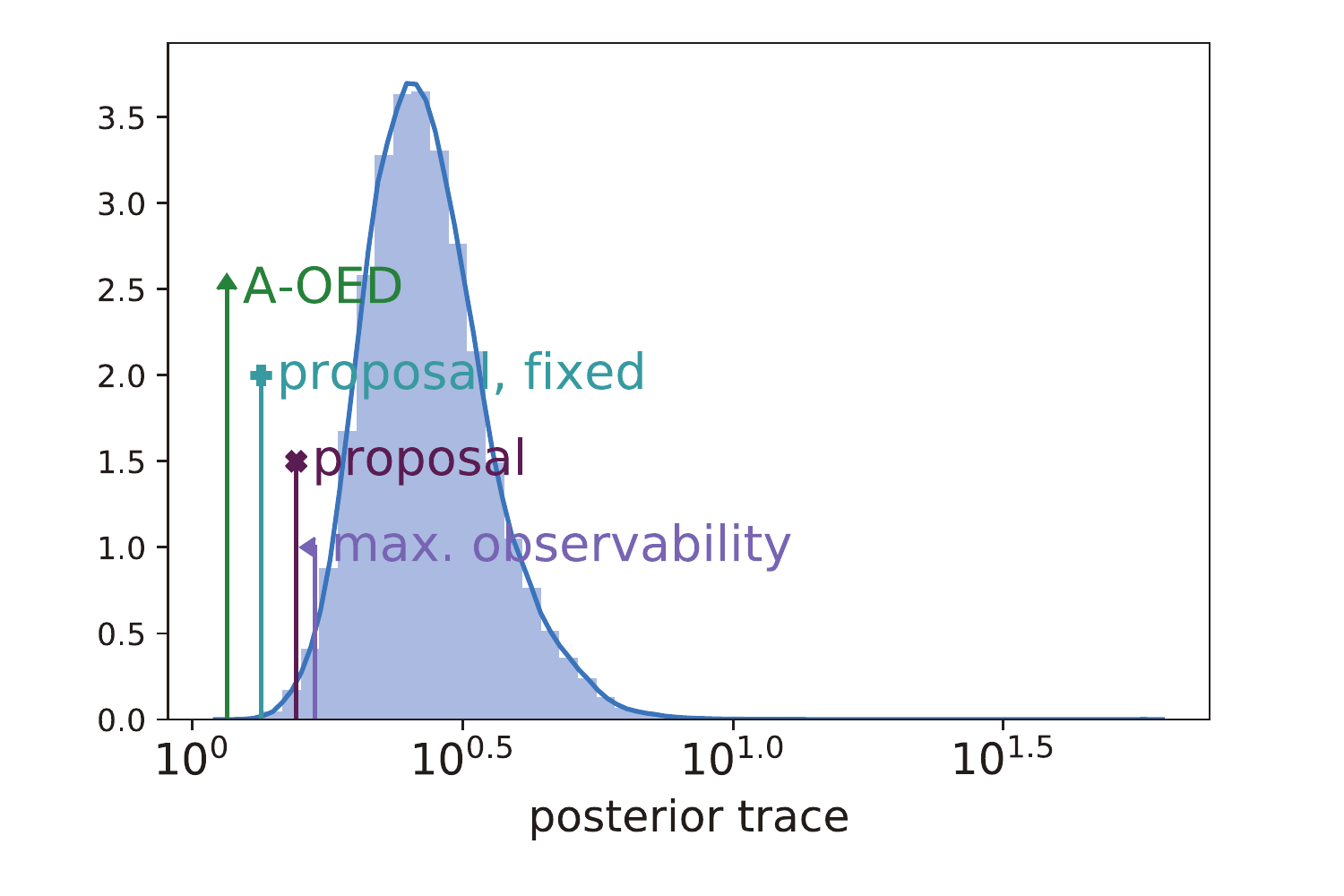} 
\end{subfigure}
\caption{Distribution of $\trace(\mypostCov)$ for $\mypara = \mypararef$ over all 1,081,575 combinations for choosing 8 out of the 25 sensor locations.
On the left: distribution of $\trace(\mypostCov)$ against the observability coefficient $\myparatoobsinfsup[\mypararef]$.
Note that the marginal distribution of the horizontal axis is provided in Figure \ref{fig:Bayes:ex1:marginObs}.
On the right: histogram of $\trace(\mypostCov)$ (marginal distribution for the plot on the left) with for the different sensor combinations (in percent out of 1,081,575 combinations).
The plots include markers for the A-optimal sensor choice, the sensors chosen by Algorithm \ref{alg:Bayes:greedyOMP} with $\myparadomaintrain$-training (``proposal'') and with $\{\mypararef\}$-training (``proposal, fixed configuration"), the sensor combination with maximum observability $\myparatoobsinfsup[\mypararef]$, and when all 25 sensors are included.
}\label{fig:Bayes:ex1:distributionplots:A}
\end{figure}

\begin{figure}
\begin{subfigure}{0.54\textwidth}
\includegraphics[width = \textwidth]{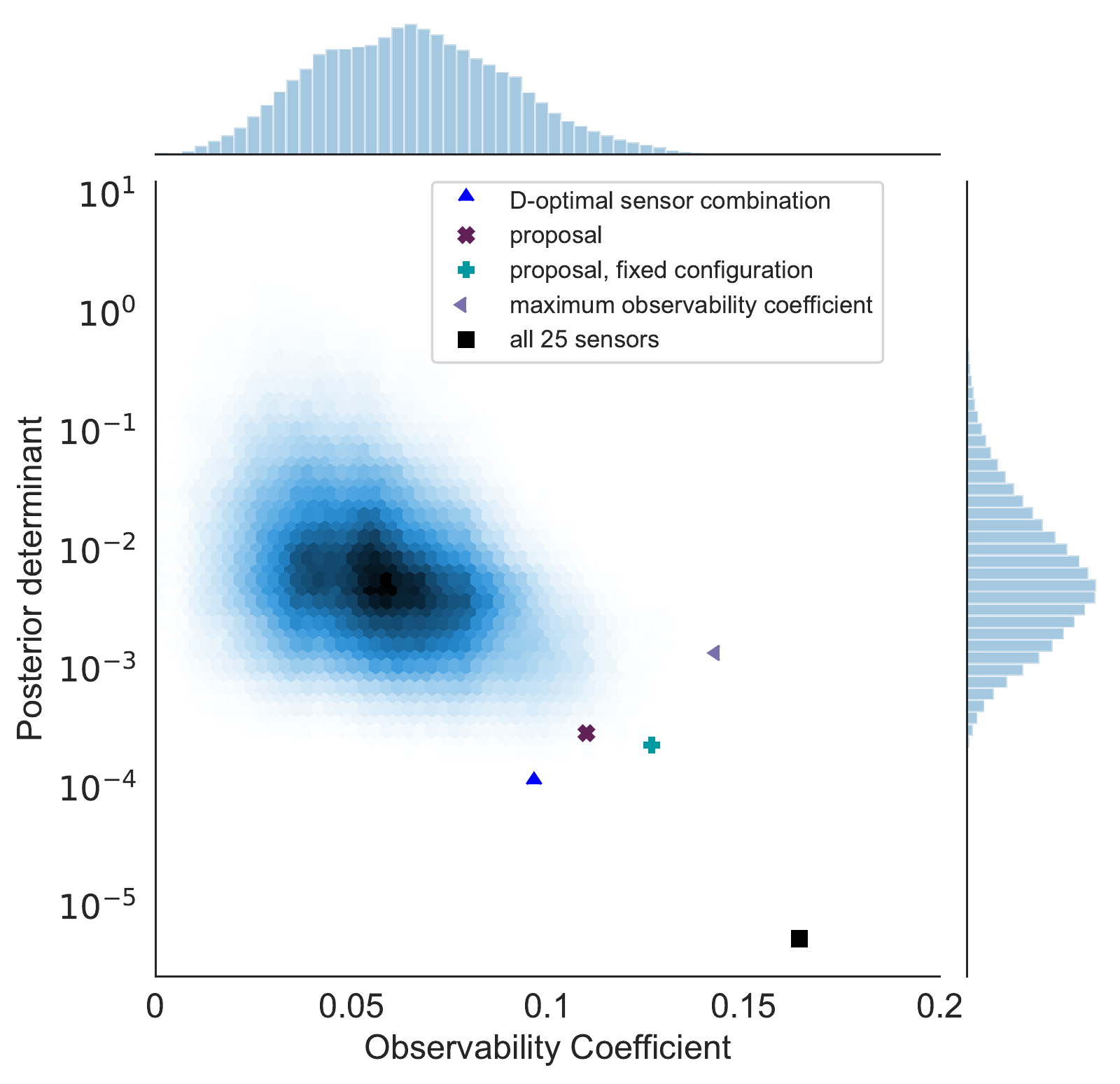} 
\end{subfigure}
\begin{subfigure}{0.45\textwidth}
\centering
~\\[1.3cm]
\begin{tabular}{r|l}
best~~~~~ & sensor selection \\
\noalign{\smallskip}\hline\noalign{\smallskip}
0.252 \% & proposal \\
0.081 \% & proposal, fixed config. \\
12.923 \% & max. observability
\end{tabular}
~\\[0.5cm]
\includegraphics[width = \textwidth]{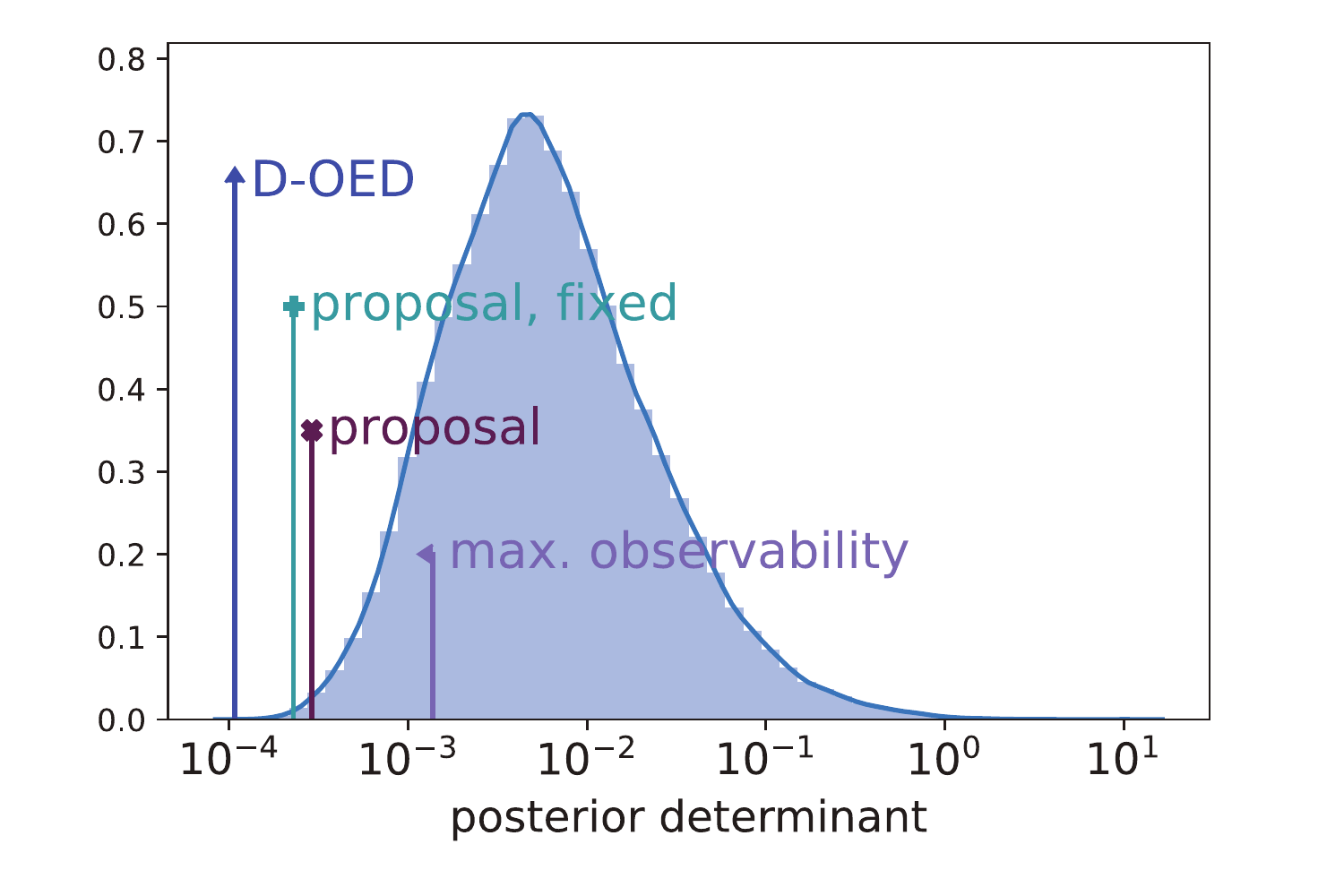} 
\end{subfigure}
\caption[Distribution of the determinant of the posterior covariance matrix over 1,081,575 combinations.]{Distribution of the posterior determinant $\det(\mypostCov)$ for $\mypara = \mypararef$. See Figure \ref{fig:Bayes:ex1:distributionplots:A} for details about the plot structure.}
\label{fig:Bayes:ex1:distributionplots:D}
\end{figure}

\begin{figure}
\begin{subfigure}{0.54\textwidth}
\includegraphics[width = \textwidth]{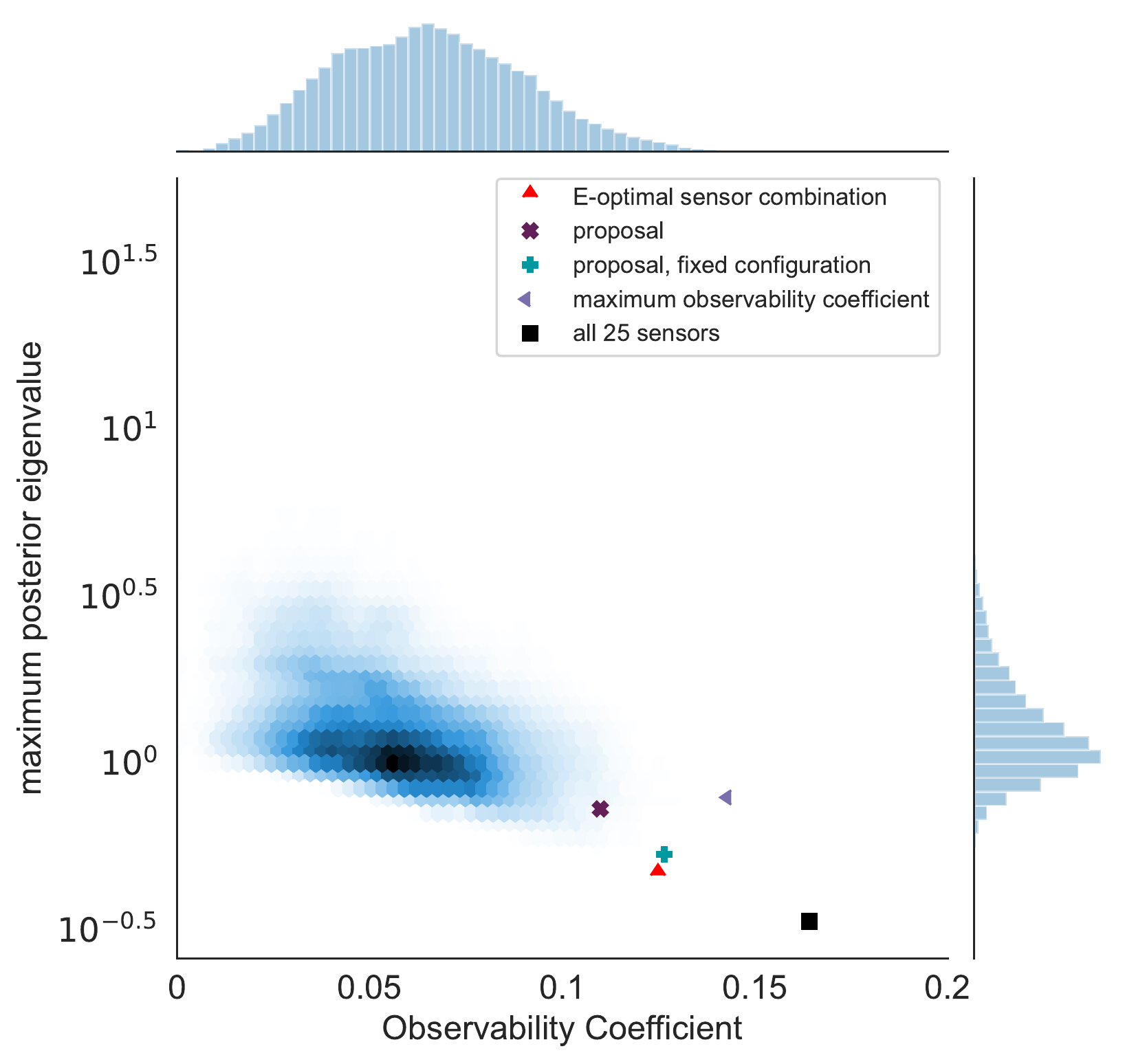} 
\end{subfigure}
\begin{subfigure}{0.45\textwidth}
\centering
~\\[1.3cm]
\begin{tabular}{l|l}
best~~~ & sensor selection \\
\noalign{\smallskip}\hline\noalign{\smallskip}
1.679 \% & proposal \\
0.001 \% & proposal, fixed config. \\
4.080 \% & max. observability
\end{tabular}
~\\[0.5cm]
\includegraphics[width = \textwidth]{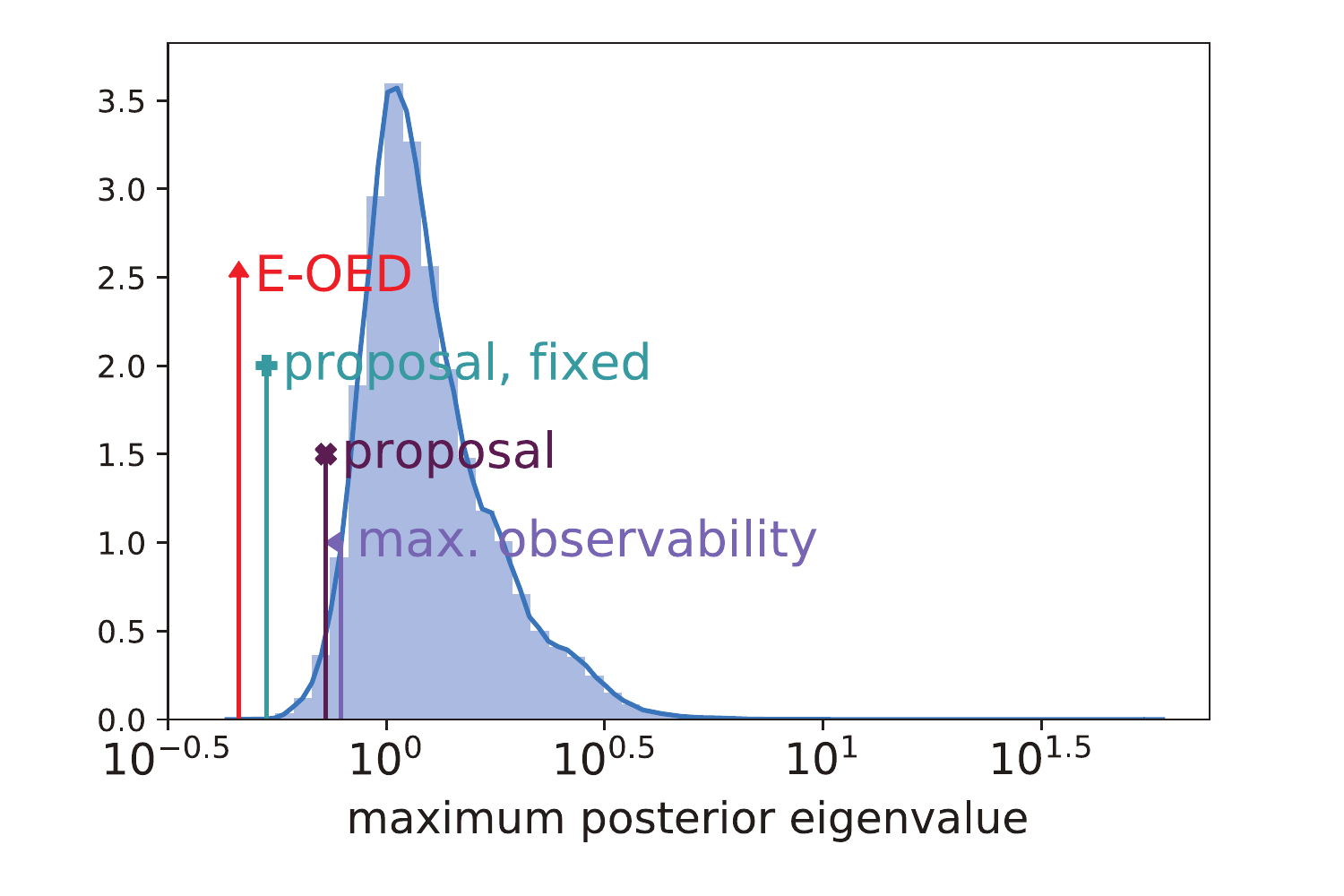} 
\end{subfigure}
\caption[Distribution of the maximum eigenvalue of the posterior covariance matrix over 1,081,575 combinations.]{Distribution of the maximum eigenvalue of the posterior covariance matrix $\mypostCov$ for $\mypara = \mypararef$. See Figure \ref{fig:Bayes:ex1:distributionplots:A} for details about the plot structure.
Note that the $\mypararef$-trained sensor combination has the 101-st smallest maximum posterior eigenvalue among all 1,081,575 possibilities.
}
\label{fig:Bayes:ex1:distributionplots:E}
\end{figure}

In order to visualize the connection between the observability coefficient $\myparatoobsinfsup[\mypararef]$ and the classic A-, D-, and E-\ac{oed} criteria, we plot the distribution of the posterior covariance matrix's trace, determinant, and maximum eigenvalue over all sensor combinations against $\myparatoobsinfsup$ in Figures \ref{fig:Bayes:ex1:distributionplots:A}, \ref{fig:Bayes:ex1:distributionplots:D}, \ref{fig:Bayes:ex1:distributionplots:E}.
Overall we observe a strong correlation between the respective \ac{oed} criteria and $\myparatoobsinfsup[\mypararef]$: It is the most pronounced in Figure \ref{fig:Bayes:ex1:distributionplots:E} for E-optimality, and the least pronounced for D-optimality in Figure \ref{fig:Bayes:ex1:distributionplots:D}.
For all \ac{oed} criteria, the correlation becomes stronger for smaller scaling factors $\myreg$ and weakens for large $\myreg$ when the prior is prioritized (plots not shown).
This behavior aligns with the discussion in Section \ref{sec:Bayes:eigenvalues} that $\myparatoobsinfsup$ primarily targets the largest posterior eigenvalue and is most decisive for priors with higher uncertainty.

\subsubsection*{Comparison for different libraries}

We finally evaluate the influence of the library $\mylibrary_{5 \times 5}$ on our results.
To this end, we randomly select 200 sets of new measurement positions, each consisting of 25 drilling locations with an associated drilling depth.
For each library, we run Algorithm \ref{alg:Bayes:greedyOMP} to choose 8 sensors, once with $\myparadomaintrain$-training on the surrogate model, and once with the full-order model at $\mypararef$ only.
For comparison, we then consider in each library each possible combination of choosing 8 unordered sensor sets and compute the trace, determinant, and maximum eigenvalue of the associated posterior covariance matrix at the reference configuration $\mypararef$ together with its observability coefficient.
This lets us identify the A-, D-, and E-optimal sensor combinations.

\begin{figure}
    \centering
    \begin{subfigure}{0.46\textwidth}
    \includegraphics[width = \textwidth]{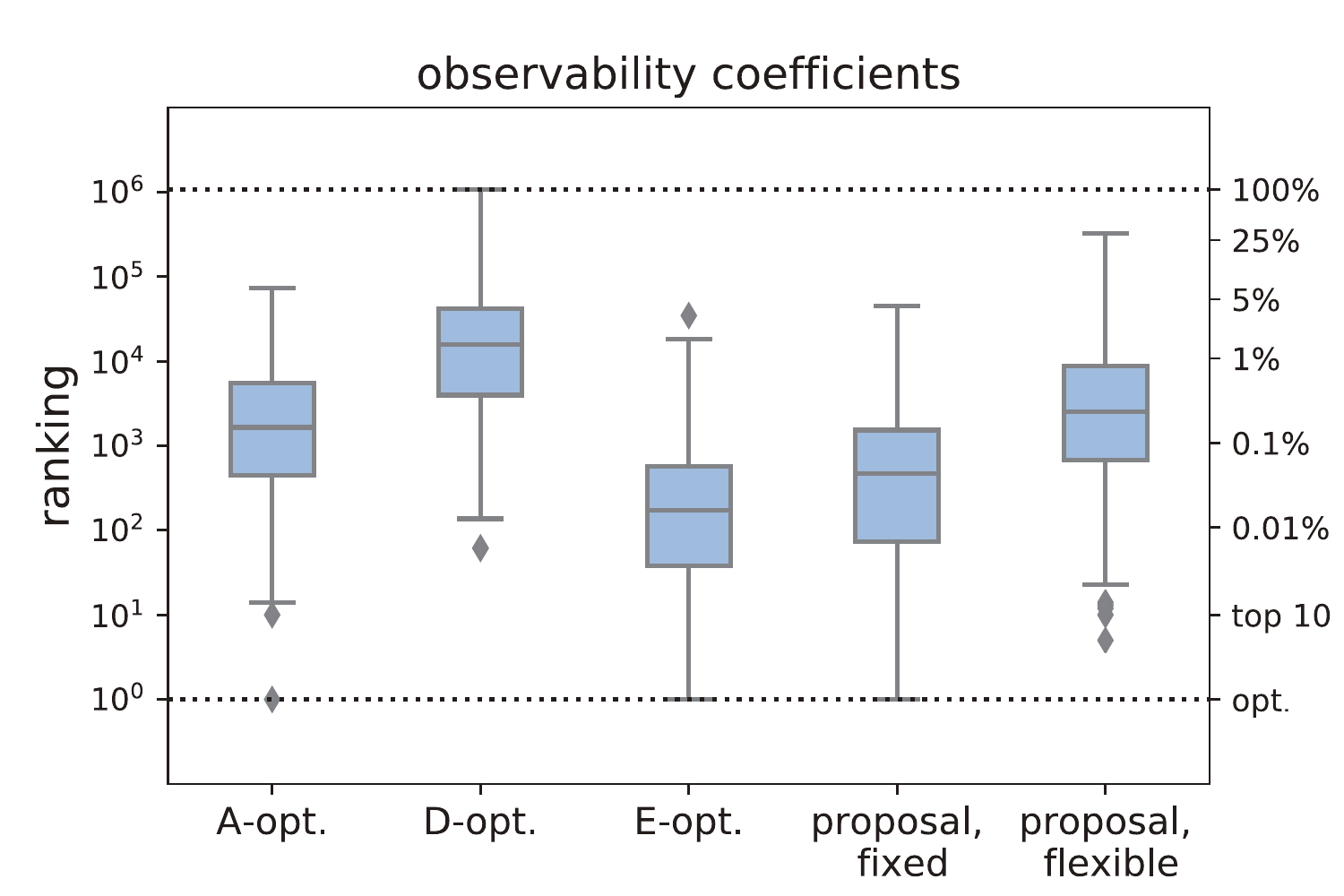} 
    \end{subfigure}
    \begin{subfigure}{0.53\textwidth}
    \begin{tabular}{l|rrr|rr}
    & \multicolumn{3}{c|}{design criterion} & \multicolumn{2}{c}{training} \\
pctl  & A-\ac{oed} & D-\ac{oed} & E-\ac{oed} & $\mypararef$~~ & $\myparadomaintrain$~ \\
\noalign{\smallskip}\hline \hline \noalign{\smallskip}
99-th & 3.5835 & 81.8508 & 1.1724 & 2.2223 & 9.2512 \\
95-th & 2.2747 & 26.8430 & 0.3601 & 0.7846 & 4.0374 \\
75-th & 0.5141 & 3.8600 & 0.0532 & 0.1419 & 0.8106 \\
50-th & 0.1527 & 1.4641 & 0.0159 & 0.0438 & 0.2354 \\
25-th & 0.0414 & 0.3669 & 0.0035 & 0.0068 & 0.0621 \\
\end{tabular}
    \end{subfigure}
    \caption{Ranking in $\myparatoobsinfsup[\mypararef]$ of the A-, D-, E- optimal and the $\mypararef$- and $\myparadomaintrain$-trained sensor choices for all possible combinations of choosing 8 unordered sensors in the library.
    Left: Boxplots obtained over 200 random sensor libraries. Right: worst-case ranking (in percent) of the corresponding percentiles (``pctl'').}
    \label{fig:Bayes:seedComparison:OED}
\end{figure}

Figure \ref{fig:Bayes:seedComparison:OED} shows how $\myparatoobsinfsup[\mypararef]$ is distributed over the 200 libraries, with percentiles provided in the adjacent table.
For 75\% of the libraries, the A- and E-optimal, and the $\myparadomaintrain$- and $\mypararef$-trained sensor choices rank among the top 1\% of combinations with the largest observability.
Due to its non-optimized training for $\mypararef$, the $\myparadomaintrain$-trained sensor set performs slightly worse than what is achieved with $\mypararef$-training, but still yields a comparatively large value for $\myparatoobsinfsup[\mypararef]$.
In contrast, overall, the D-optimal sensor choices have smaller observability coefficients, presumably because the minimization of the posterior determinant is influenced less by the maximum posterior eigenvalue.

The ranking of the $\myparadomaintrain$- and $\mypararef$-trained sensor configurations in terms of the posterior covariance matrix's trace, determinant, and maximum eigenvalue over the 200 libraries is given in Figure \ref{fig:Bayes:seedComparison:Both}.
Both perform well and lie for 75\% of the libraries within the top 1\% of combinations.
As the ranking is performed for the configuration parameter $\mypararef$, the $\mypararef$-trained sensor combination performs better, remaining in 95\% of the libraries within the top 5\% of sensor combinations.

\begin{figure}
\centering
\begin{minipage}{0.5\textwidth}
\includegraphics[width = \textwidth]{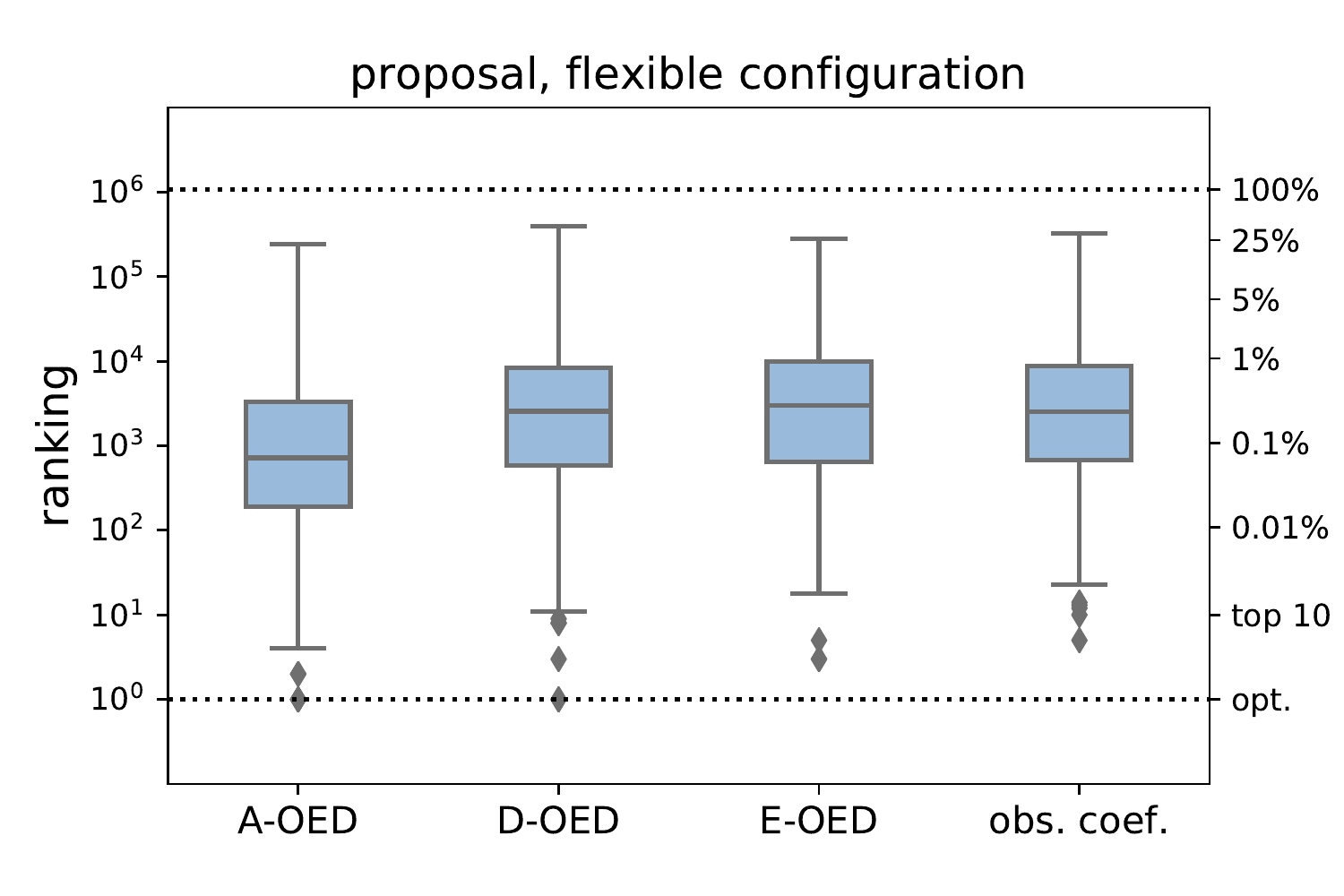} 
\end{minipage}
\begin{minipage}{0.49\textwidth}\centering
\begin{tabular}{l|rrrr}
pctl  & A-\ac{oed} & D-\ac{oed} & E-\ac{oed} & $\myparatoobsinfsup[\mypararef]$ \\
\noalign{\smallskip}\hline \hline \noalign{\smallskip}
99-th & 3.9240 & 6.2372 & 10.8391 & 9.2512 \\
95-th & 1.9093 & 3.1544 & 4.5583 & 4.0374 \\
75-th & 0.3083 & 0.7718 & 0.9185 & 0.8106 \\
50-th & 0.0664 & 0.2361 & 0.2763 & 0.2354 \\
25-th & 0.0177 & 0.0536 & 0.0596 & 0.0621 \\
\end{tabular}
\end{minipage} \\
\begin{minipage}{0.5\textwidth}
\includegraphics[width = \textwidth]{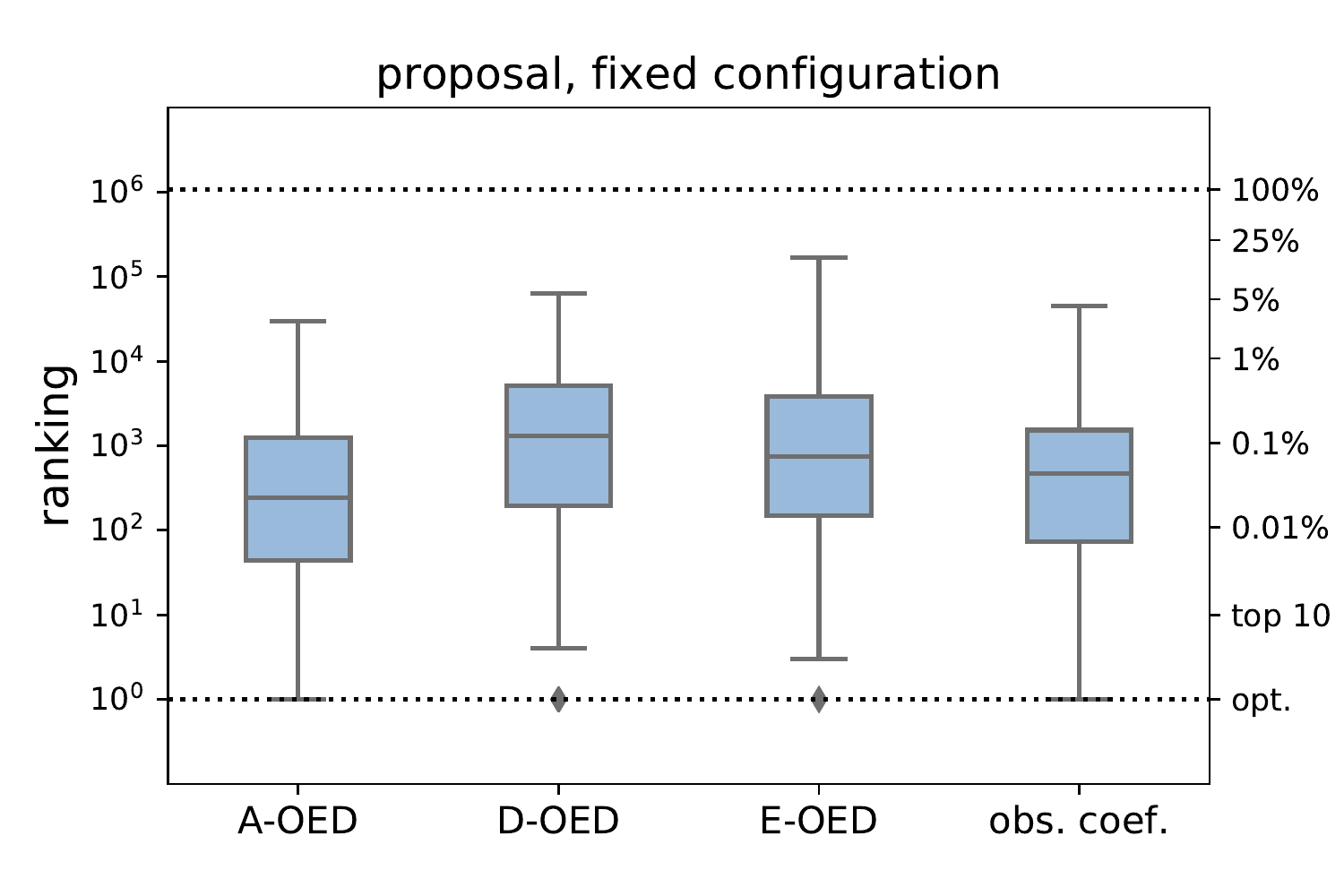} 
\end{minipage}
\begin{minipage}{0.49\textwidth}
\centering
\begin{tabular}{l|rrrr}
pctl & A-\ac{oed} & D-\ac{oed} & E-\ac{oed} & $\myparatoobsinfsup[\mypararef]$ \\
\noalign{\smallskip}\hline \hline \noalign{\smallskip}
99-th & 2.5261 & 2.9752 & 11.1534 & 2.2223 \\
95-th & 1.0134 & 1.8324 & 2.8458 & 0.7846 \\
75-th & 0.1155 & 0.4698 & 0.3549 & 0.1419 \\
50-th & 0.0224 & 0.1212 & 0.0687 & 0.0438 \\
25-th & 0.0041 & 0.0181 & 0.0138 & 0.0068 \\
\end{tabular}
\end{minipage}
\caption
{Ranking of the posterior covariance matrix $\mypostCov[\mypararef,\myobsmap]$ in terms of the A-, D-, E-\ac{oed} criteria and the observability coefficient $\myparatoobsinfsup[\mypararef]$ when the observation operator $\myparatoobsmap$ is chosen with Algorithm \ref{alg:Bayes:greedyOMP} and $\myparadomaintrain$-training (top) or $\mypararef$-training (bottom). 
The ranking is obtained by comparing all possible unordered combinations of 8 sensors in each sensor library.
On the left: Boxplots of the ranking over 200 sensor libraries; on the right: ranking (in percent) among different percentiles.
}
\label{fig:Bayes:seedComparison:Both}
\end{figure}

\subsection{Unrestricted Library}\label{sec:Bayes:results:unrestricted}

We next verify the scalability of Algorithm \ref{alg:Bayes:greedyOMP} to large sensor libraries by permitting all 2,209 drilling locations, at each of which at most one measurement may be taken at any of the 5 available measurement depths.
Choosing 10 unordered sensors yields approximately \texttt{7.29e+33} possible combinations.
Using the \ac{rb} surrogate model from before, we run Algorithm \ref{alg:Bayes:greedyOMP} once on a training grid $\myparadomaintrain \subset \myparadomain$ consisting of 10,000 randomly chosen configurations using only the surrogate model (runtime 14.19 s), and once on the reference configuration $\mypararef$ using the full-order model (runtime 15.85 s) for comparison.
We terminate the algorithm whenever 10 sensors are selected.
Compared to the training time on $\mylibrary_{5 \times 5}$ before, the results confirm that the size of the library itself has little influence on the overall runtime but that the full-order computations and the size of $\myparadomaintrain$ relative to the surrogate compute dominate.

\begin{figure}
    \centering
\begin{subfigure}{0.48\textwidth}
\includegraphics[width = \textwidth]{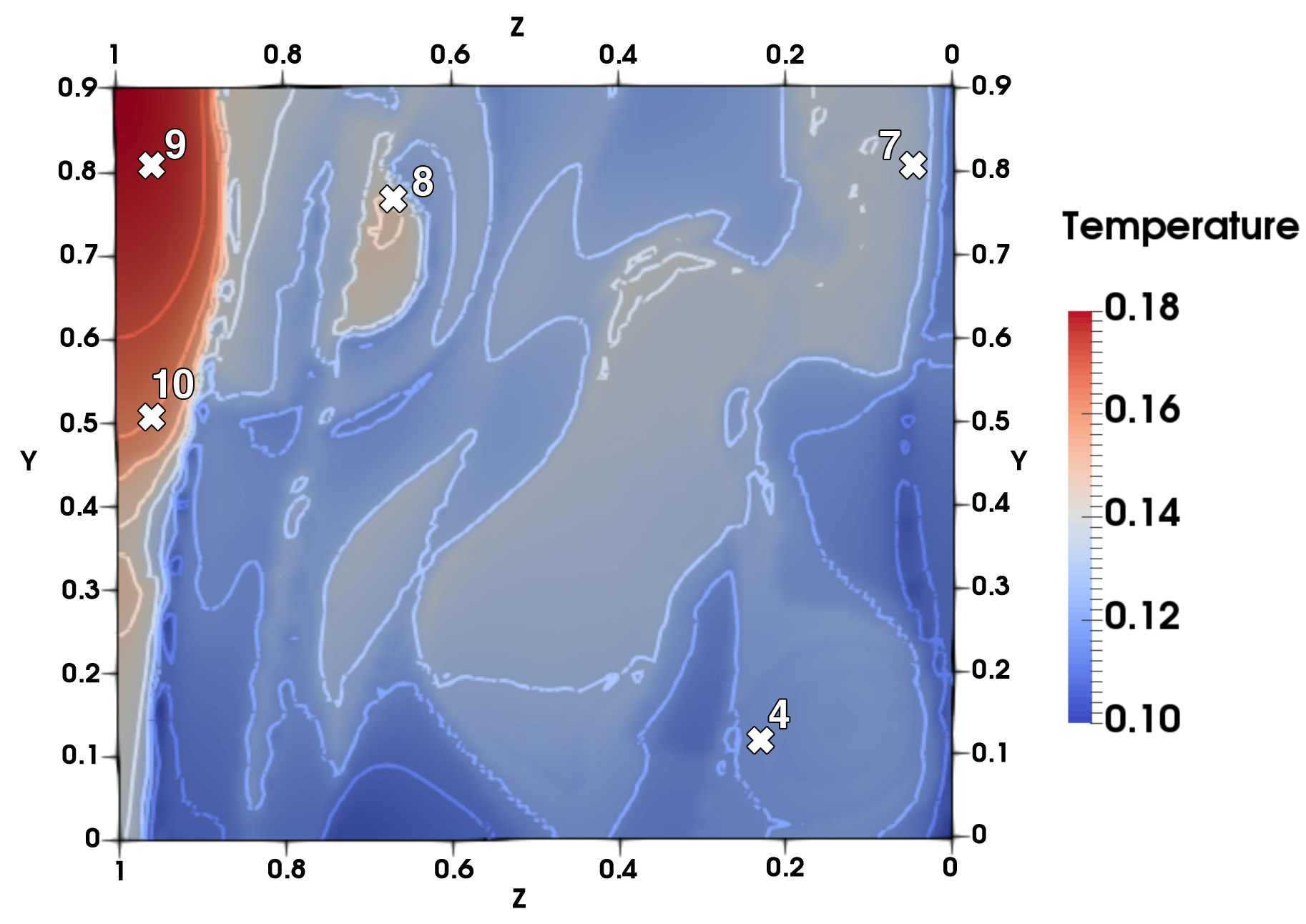} 
\caption{upmost layer, $\myparadomaintrain$-training}
\end{subfigure} 
\begin{subfigure}{0.48\textwidth}
\includegraphics[width = \textwidth]{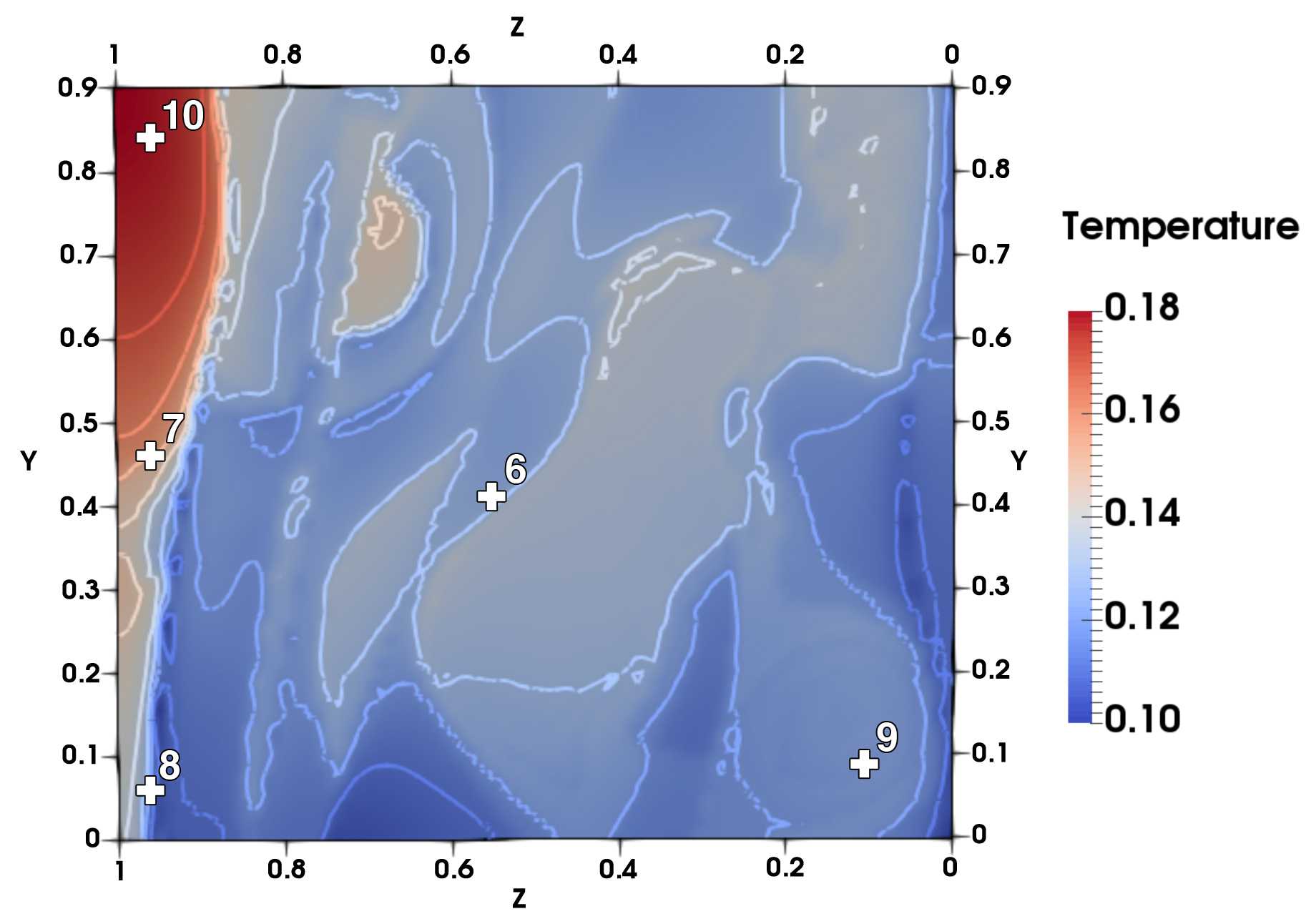} 
\caption{upmost layer, $\mypararef$-training}
\end{subfigure}
\newline
\begin{subfigure}{0.48\textwidth}
\includegraphics[width = \textwidth]{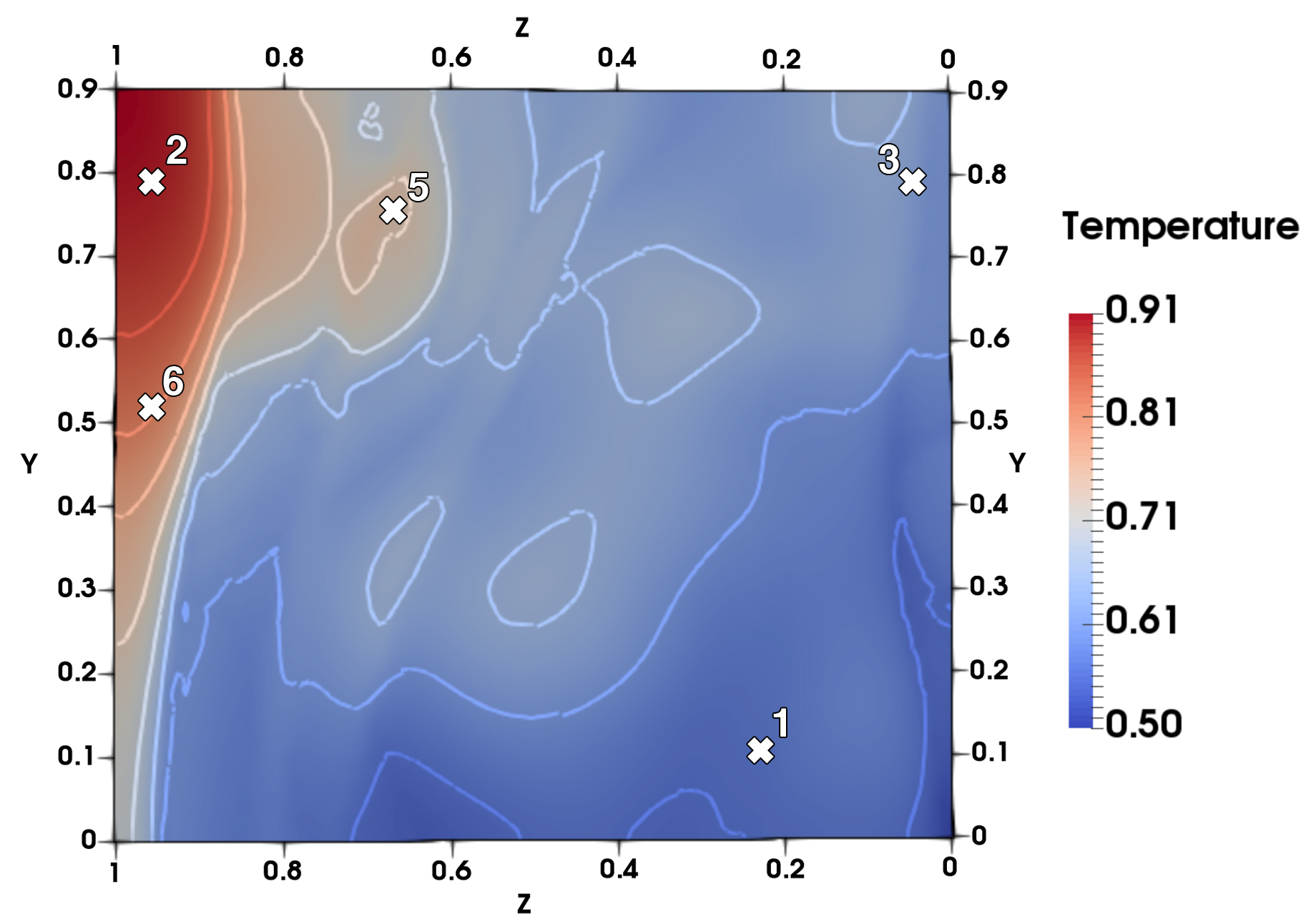} 
\caption{lowest layer, $\myparadomaintrain$-training}
\end{subfigure}
\begin{subfigure}{0.48\textwidth}
\includegraphics[width = \textwidth]{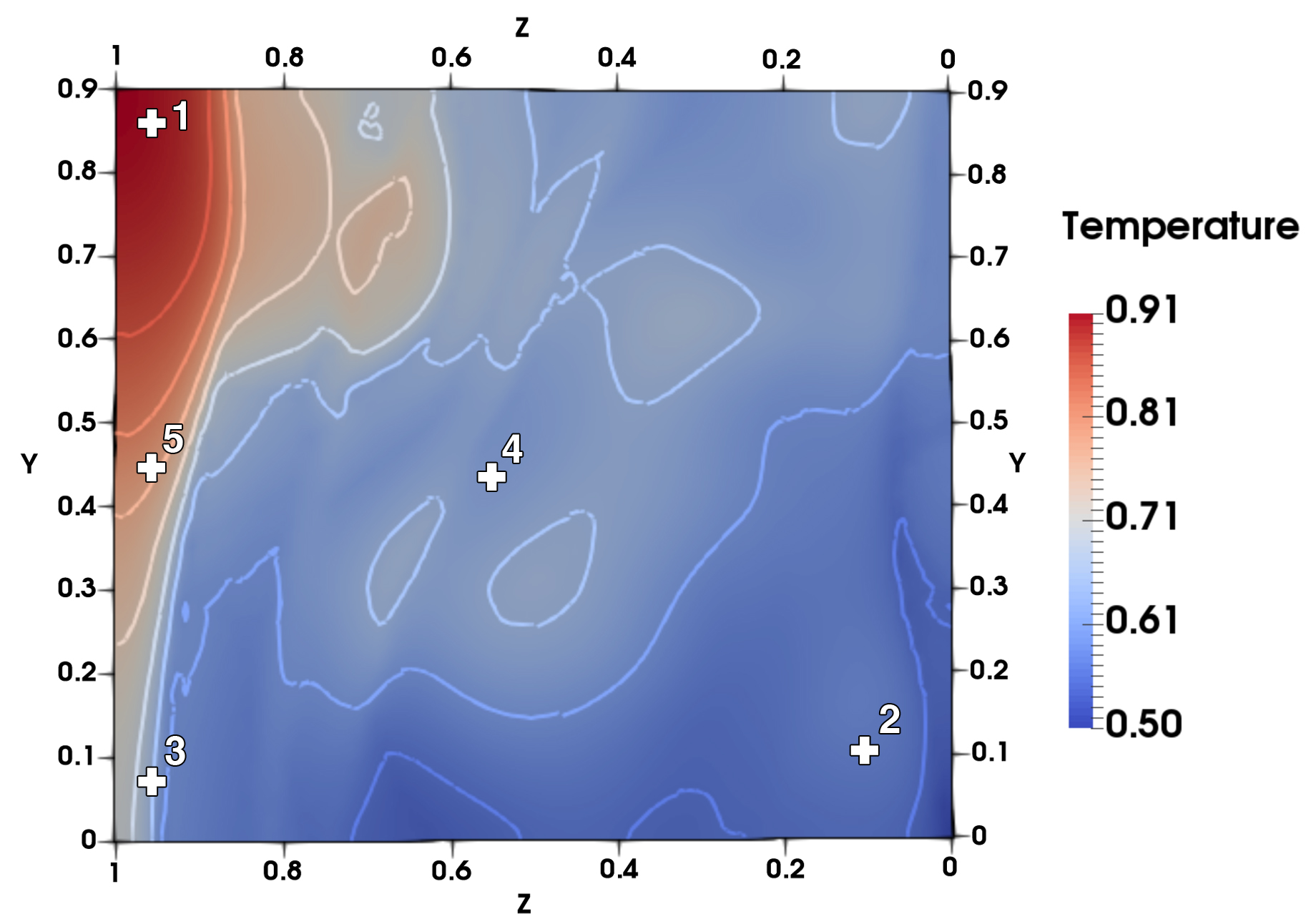} 
\caption{lowest layer, $\mypararef$-training}
\end{subfigure}
    \caption{Sensor positions chosen by Algorithm \ref{alg:Bayes:greedyOMP} from a grid of $47 \times 47$ available horizontal positions with available 5 depths each, though only the lowest (bottom) and upmost (top) layers were chosen.
    The underlying plot shows cuts through the full-order solution $\mystate$ at $\mypara = \mypararef$.
Left: $\myparadomaintrain$-training with the \ac{rb} surrogate model on a training set $\myparadomaintrain \subset \myparadomain$ with 10,000 random configurations; runtime 14.19 s for 10 sensors.
Right: $\mypararef$-training with full-order model at reference parameter; runtime 15.85 s for 10 sensors.
}\label{fig:Bayes:unrestricted:positions}
\end{figure}

The sensors chosen by the two runs of Algorithm \ref{alg:Bayes:greedyOMP} are shown in Figure \ref{fig:Bayes:unrestricted:positions}.
They share many structural similarities:
\begin{itemize}
\item \textbf{Depth:}
Despite the availability of 5 measurement depths, sensors have only been chosen on the lowest and the upmost layers with 5 sensors each.
The lower sensors were chosen first (with one exception, sensor 3 in $\mypararef$-training), presumably because the lower layer is closer to the uncertain Neumann boundary condition and therefore yields larger measurement values.
\item \textbf{Pairing}
Each sensor on the lowest layer has a counterpart on the upmost layer that has almost the same position on the horizontal plane.
This pairing targets noise sensitivity: With the prescribed error covariance function, the noise in two measurements is increasingly correlated the closer the measurements lie horizontally, independent of their depth coordinate.
Choosing a reference measurement near the zero-Dirichlet boundary at the surface helps filter out noise terms in the lower measurement.
\item \textbf{Organization}
On each layer, the sensors are spread out evenly and approximately aligned in 3 rows and 3 columns.
The alignment helps distinguish between the constant, linear, and quadratic parts of the uncertain Neumann flux function in north-south and east-west directions.
\end{itemize}

Figure \ref{fig:Bayes:unrestricted:both} (left side) shows the increase in the observability coefficients $\myparatoobsrbinfsup$ (for $\myparadomaintrain$-training) and $\myparatoobsinfsup[\mypararef]$ (for $\mypararef$-training) over the number of chosen sensors.
We again observe a strong initial incline followed by stagnation for the $\myparadomaintrain$-trained sensors, whereas the curve for $\mypararef$-training already starts at a large value to remain then almost constant.
The latter is explained by the positions of the first 5 sensors in Figure \ref{fig:Bayes:unrestricted:positions} (right), as they are already spaced apart in both directions for the identification of quadratic polynomials.
In contrast, for $\myparadomaintrain$-training, the ``3 rows, 3 columns'' structure is only completed after the sixth sensor (c.f. Figure \ref{fig:Bayes:unrestricted:positions}, left).
With 6 sensors, the observability coefficients in both training schemes have already surpassed the final observability coefficients with 8 sensors in the previous training on the smaller library $\mylibrary_{5 \times 5}$.
The final observability coefficients at the reference parameter $\mypararef$ are $\myparatoobsinfsup[\mypararef] = 0.4042$ for $\mypararef$-training, and $\myparatoobsinfsup[\mypararef] = 0.3595$ for $\myparadomaintrain$-training.

\begin{figure}
    \centering
    \begin{subfigure}{0.49\textwidth}
    \centering
    \includegraphics[width = \textwidth]{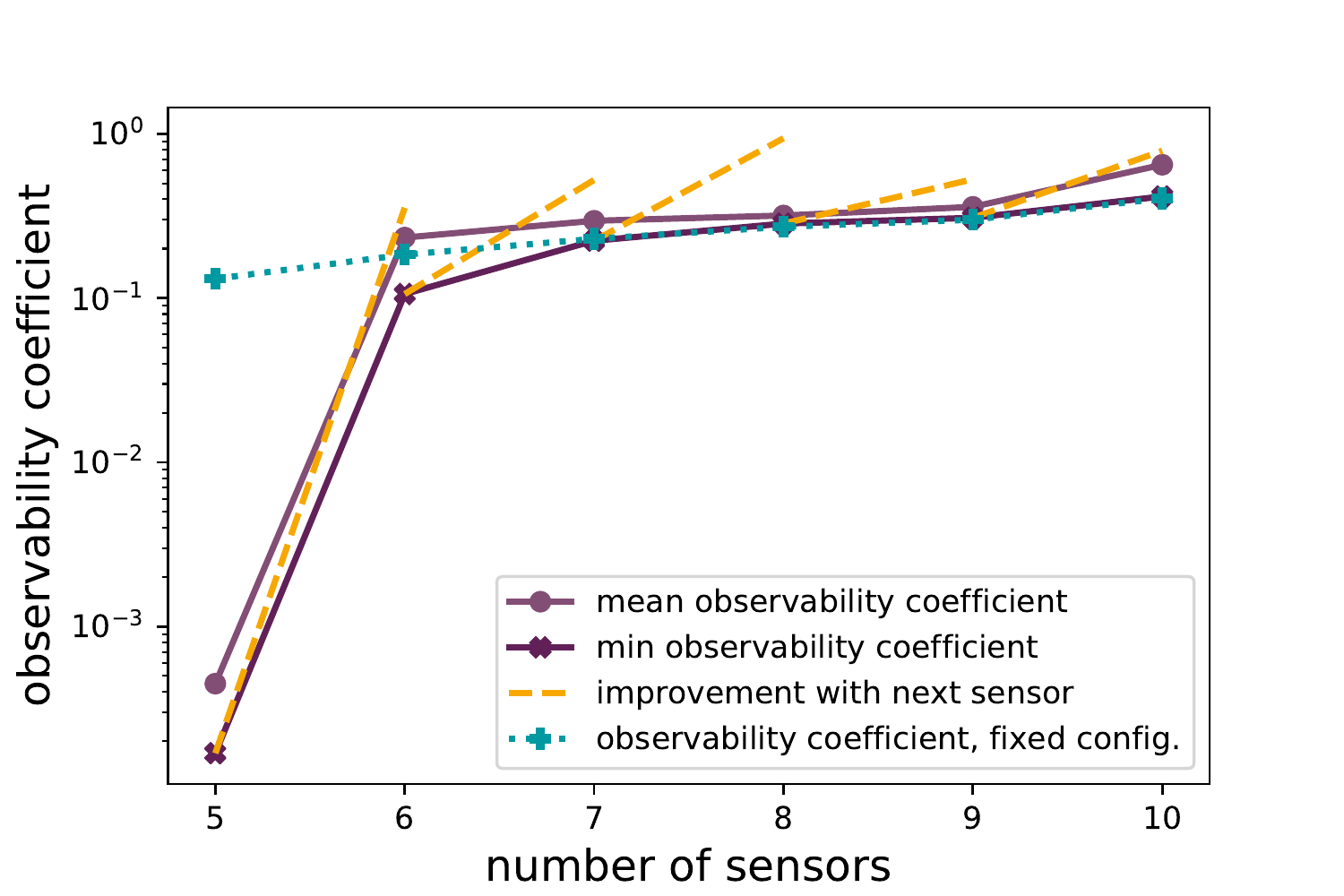} 
    \end{subfigure}
    \begin{subfigure}{0.49\textwidth}
    \centering
    \includegraphics[width = \textwidth]{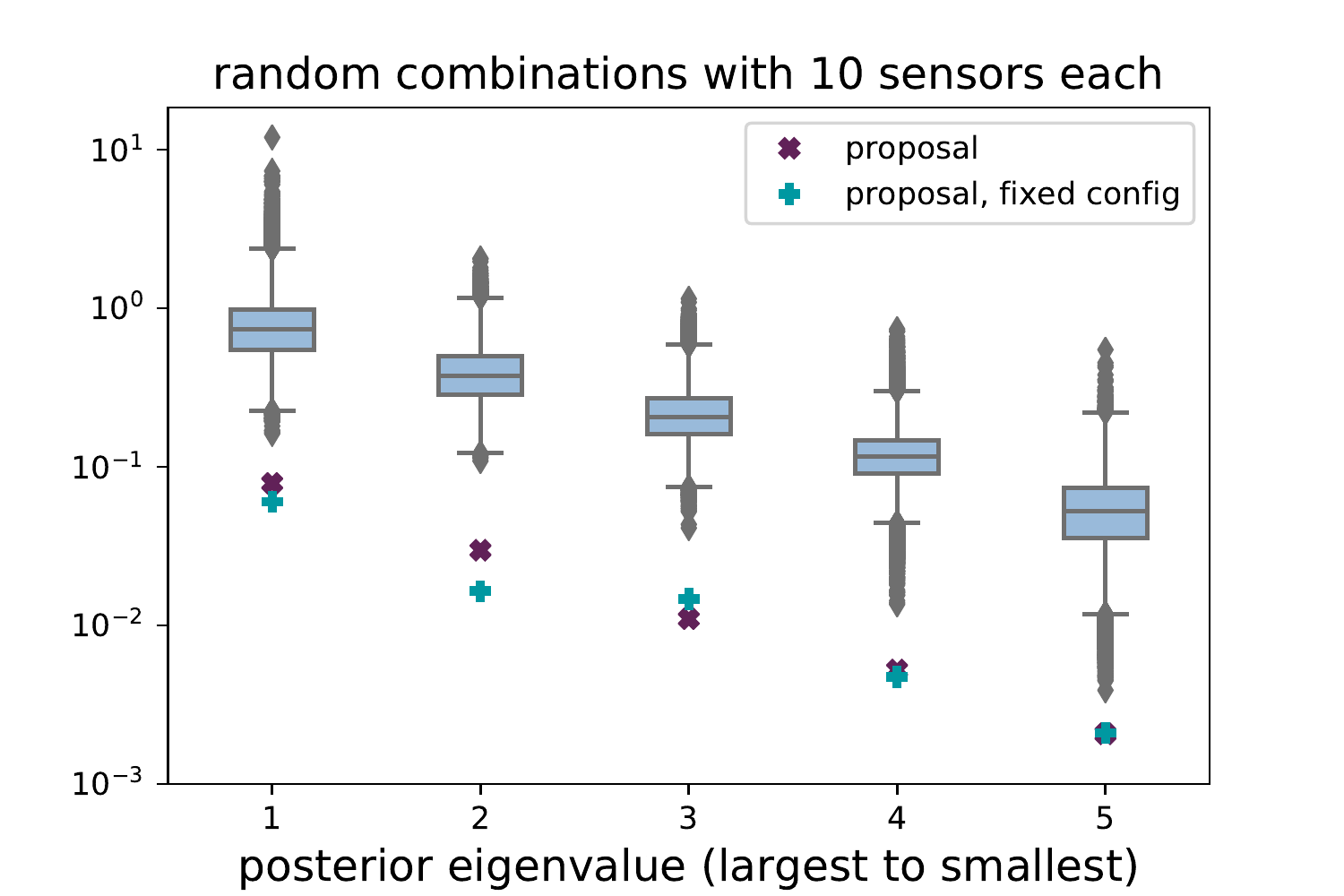} 
    \end{subfigure}
    \caption{Left:
    Observabity coefficients during sensor selection with $\myparadomaintrain$- and $\mypararef$-training for a library with 11,045 measurement positions and combinatorial restrictions.
Shown are 1) the minimum and mean surrogate observability coefficient $\myparatoobsrbinfsup$ over a training set with 10,000 random configurations with final values $\min_{\mypara} \myparatoobsrbinfsup = 0.4160$ and $\text{mean}_{\mypara} \myparatoobsrbinfsup = 0.6488$, and 2) the full-order observability coefficient $\myparatoobsinfsup[\mypararef]$ when training on the reference parameter $\mypararef$ alone (final value $\myparatoobsinfsup[\mypararef] = 0.4042$).
Right:
Boxplots for the 5 eigenvalues of the posterior covariance matrix $\mypostCov$ over 50,000 sets of 10 sensors chosen uniformly from a $5 \times 47 \times 47$ grid with imposed combinatorial restrictions.
The eigenvalues are compared according to their order from largest to smallest.
Indicated are also the eigenvalues for the $\myparadomaintrain$-trained (purple, ``x''-marker) and $\mypararef$-trained (turquoise, ``+''-marker) sensors from Figure \ref{fig:Bayes:unrestricted:positions}.
}
    \label{fig:Bayes:unrestricted:both}
\end{figure}

As a final experiment, we compare the eigenvalues of the posterior covariance matrix $\mypostCov[\myobsmap, \mypararef]$ for the $\myparadomaintrain$- and $\mypararef$-trained sensors against 50,000 sets of 10 random sensors each.
We confirm that all 50,000 sensor combinations comply with the combinatorial restrictions.
Boxplots of the eigenvalues are provided in Figure \ref{fig:Bayes:unrestricted:both} (right side).
The eigenvalues of the posterior covariance matrix with sensors chosen by Algorithm \ref{alg:Bayes:greedyOMP} are smaller\footnote{Here we compare the largest eigenvalue of one matrix to the largest eigenvalue of another, the second largest to the second largest, and so on.} than all posterior eigenvalues for the random sensor combinations.

\section{Conclusion}
\label{sec:Bayes:conclusion}

In this work, we analyzed the connection between the observation operator and the eigenvalues of the posterior covariance matrix in the inference of an uncertain parameter via Bayesian inversion for a linear, hyper-parameterized forward model.
We identified an observability coefficient whose maximization decreases the uncertainty in the posterior probability distribution for all hyper-parameters.
To this end, we proposed a sensor selection algorithm that expands an observation operator iteratively to guarantee a uniformly large observability coefficient for all hyper-parameters.
Computational feasibility is retained through a reduced-order model in the greedy step and an \ac{omp} search for the next sensor that only requires a single full-order model evaluation.
The validity of the approach was demonstrated on a large-scale heat conduction problem over a section of the Perth Basin in Western Australia.
Future extensions of this work are planned to address 1) high-dimensional parameter spaces through parameter reduction techniques, 2) the combination with the \ac{pbdw} \acs{infsup}-criterion to inform sensors by functionalanalytic means in addition to the noise covariance, and 3) the expansion to non-linear models through a Laplace approximation.

\section*{Acknowledgments}
We would like to thank Tan Bui-Thanh, Youssef Marzouk, Francesco Silva, Andrew Stuart, Dariusz Ucinski, and Keyi Wu for very helpful discussions, and Florian Wellmann at the Institute for Computational Geoscience, Geothermics and Reservoir Geophysics at RWTH Aachen University for providing the Perth Basin Model.
This work was supported by the Excellence Initiative of the German federal and state governments and the German Research Foundation through Grants GSC 111 and 33849990/GRK2379 (IRTG Modern Inverse Problems).  This project has also received funding from the European Research Council (ERC) under the European Union's Horizon 2020 research and innovation programme (grant agreement n° 818473), the US Department of Energy (grant DE-SC0021239), and the US Air Force Office of Scientific Research (grant FA9550-21-1-0084). Peng Chen is partially supported by the NSF grant DMS \#2245674.

%\section*{References}

%%Vancouver style references.
\bibliographystyle{model1-num-names}
\bibliography{BayesOED-Paper}

%\section*{Supplementary Material}

%\appendix
%\input{660_appendix}

\end{document}